\documentclass[10pt]{amsart}
\usepackage[T1]{fontenc}
\usepackage[utf8]{inputenc}
\usepackage{lmodern}
\usepackage{amssymb,mathtools,mathrsfs}
\usepackage{microtype}
\usepackage[margin=1in]{geometry}
\usepackage[dvipsnames]{xcolor}
\usepackage{booktabs,longtable,array,tabularx}
\usepackage{url}
\usepackage{enumitem,graphicx,xurl}
\graphicspath{{figures/}}

\usepackage{amssymb,mathtools}
\usepackage{microtype}
\usepackage[margin=1in]{geometry}
\usepackage{amssymb,mathtools}
\usepackage{microtype}
\usepackage[margin=1in]{geometry}
\usepackage{xcolor}
\usepackage{color}
\usepackage{booktabs,longtable,array}
\usepackage{tikz}
\usetikzlibrary{arrows.meta,calc,patterns}
\usepackage[colorlinks=true,linkcolor=blue,citecolor=blue,urlcolor=blue]{hyperref}

\newtheorem{theorem}{Theorem}[section]
\newtheorem{proposition}[theorem]{Proposition}
\newtheorem{corollary}[theorem]{Corollary}
\newtheorem{lemma}[theorem]{Lemma}

\theoremstyle{definition}

\newtheorem{example}[theorem]{Example}
\theoremstyle{remark}
\newtheorem{remark}{Remark}

\newcommand{\R}{\mathbb R}
\newcommand{\Z}{\mathbb Z}
\newcommand{\N}{\mathbb N}
\newcommand{\C}{\mathbb C}
\newcommand{\Q}{\mathbb Q}

\newcommand{\Li}{\operatorname{Li}}
\newcommand{\lcm}{\operatorname{lcm}}
\newcommand{\sinc}{\operatorname{sinc}}
\newcommand{\csch}{\operatorname{csch}}
\DeclareMathOperator{\kei}{kei}
\newcommand{\ee}[1]{\mathrm e^{2\pi i(#1)}}
\newcommand{\en}[1]{\mathrm e^{-2\pi i(#1)}}

\newcommand{\uu}{{\boldsymbol{u}}}
\newcommand{\vv}{{\boldsymbol{v}}}
\newcommand{\xx}{{\boldsymbol{x}}}
\newcommand{\yy}{{\boldsymbol{y}}}
\newcommand{\mm}{{\boldsymbol{m}}}
\newcommand{\rr}{{\boldsymbol{r}}}
\newcommand{\aaa}{{\boldsymbol{a}}}
\newcommand{\jj}{{\boldsymbol{j}}}
\newcommand{\dd}{{\boldsymbol{d}}}
\newcommand{\zz}{{\boldsymbol{z}}}
\newcommand{\pp}{{\boldsymbol{p}}}
\newcommand{\ww}{{\boldsymbol{w}}}
\newcommand{\xxi}{{\boldsymbol{\xi}}}

\hypersetup{pdftitle={Arithmetic Poisson summation formulas},pdfauthor={Pablo Andr\'es Panzone and Lucas Oliveira},pdfcreator={},pdfproducer={}}
\ifdefined\pdfinfoomitdate\pdfinfoomitdate=1\fi
\ifdefined\pdfsuppressptexinfo\pdfsuppressptexinfo=15\fi
\title{Arithmetic Poisson summation formulas}
\author{Pablo Andr\'es Panzone}
\address{Departamento e Instituto de Matem\'atica, UNS, Av. Alem 1253, (8000) Bah\'ia Blanca, Argentina}
\email{pablopanzone@hotmail.com}
\author{Lucas Oliveira}
\address{Departamento de Matem\'atica, UFRGS, Porto Alegre, Brazil, 91509-900}
\email{lucas.oliveira@ufrgs.br}
\begin{document}
\begin{abstract}
We formulate Poisson summation with periodic weights in forms adapted to arithmetic sums (arithmetic Poisson summation formulas, APSF), in scalar, polynomial-congruence and vector-divisibility versions, extending Berndt's arithmetic Poisson formula. We use APSF as a common framework to rederive Ramanujan's formula for $\zeta(2m+1)$ and its character analogues due to Katayama and Berndt-Straub, the functional equations of $\zeta$, of Dirichlet's beta function and of Estermann's function, and the exponents of Tolev and Chen-Wang for power-free values of $x^2+y^2+1$ and $x^2+y^2+z^2+k$. It also gives reciprocity laws with Kelvin-function kernels for Lambert series twisted by a Dirichlet character over imaginary quadratic fields of arbitrary class number; asymptotic formulas with error terms for $r$-free values of $x_1^2+\cdots+x_n^2+k$, with Gaussian-weighted versions and the meromorphic continuation of the associated Dirichlet series; an argument of Terras, relating two-dimensional Bessel series for all odd zeta values and even Dirichlet beta values.
\end{abstract}
\maketitle

\section{Introduction}

Our starting point is Berndt's arithmetic Poisson formula \cite{Berndt82}, which separates divisor coefficients inside a Poisson summation identity. For $S\subseteq\N$ a nonempty subset and for $a:\Z\to\C$ an arithmetic function, consider the companion function
\begin{equation}\label{eq:berndteq1}
 b(n)=\sum_{d\mid n}\mu(d)a(n/d),\qquad n\ge1,
\end{equation}
with $\mu$ the M\"obius function. Under convergence assumptions permitting the sums, integrals, and regroupings involved in the formula below, we have
\begin{equation}\label{eq:berndteq2}
 \sum_{n\ne0}f(n)\left(\sum_{\substack{d\mid |n|\\d\in S}}b(d)\right)
 =\sum_{\substack{d\ge1\\d\in S}}\frac{b(d)}d
 \left(\sum_{r\in\Z}\widehat f(r/d)-df(0)\right).
\end{equation}
Combining his arithmetic formula with Vorono\u\i's Bessel expansion \cite{Voronoi05}, Berndt obtains the circle identity \cite[p. 299]{Berndt82}
\begin{equation}\label{eq:berndteq3}
 \sum_{1\le n\le x}'r_2(n)
 =\pi x+4\sum_{d=1}^\infty\frac{\chi_{-4}(d)}d
 \left\{\frac4\pi\sum_{r=1}^\infty\sin^2\left(\frac{\pi xd}{4r}\right)-\frac{\pi xd}{2}\right\}.
\end{equation}
Here $r_2(n)$ counts representations of $n$ as a sum of two integer squares, and $\chi_{-4}$ is $0$ on even integers and $(-1)^{(d-1)/2}$ on odd positive integers. At a positive integer endpoint the prime means that only $r_2(x)/2$ is counted. The endpoint convention is that of Berndt's circle formula \cite{Berndt82} and the braces prescribe the indicated order of summation. 

Berndt's proof detects divisibility by the properties of the finite sum
$\sum_{j=0}^{d-1}\ee{jx/d}$: he uses the oscillatory and cancellative behavior of this sum to rewrite the left-hand side of \eqref{eq:berndteq2} in a form suited to the classical Poisson summation formula. To cover more cases, the idea is to replace the period $d$ of the above exponential sum by a positive integer $h(d)$, giving us the expression
$$
{S}(d,x):=S_h(d,x)=\sum_{j=0}^{h(d)-1} e^{2\pi i j x/h(d)}.
$$
The choice of $h$ determines the arithmetic restriction. We formulate scalar, vector-divisibility, and polynomial-congruence versions below. Their applications range from Mersenne--Fermat detectors and Ramanujan reciprocity to Gaussian counts and Bessel expansions of Epstein zeta functions. We call these identities \textbf{arithmetic Poisson summation formulas} (APSF). \\

\noindent\textbf{Notations and conventions}
In what follows, by \emph{Poisson admissible} we mean conditions on a function $f:\R^n\to\C$ permitting the usual Poisson summation formula in the stated order. When a formula uses translated residue classes, admissibility on each of those classes is required. We use number theorist normalization of the Fourier transform
$$
 \widehat f(\xxi)=\int_{\R^n}f(\xx)\ee{\xx\cdot\xxi}\,d\xx.
$$
Vectors are written in bold, for example $\vv=(v_1,\ldots,v_n)$; their coordinates and scalar arithmetic indices are not. Divisors of a nonzero integer are positive divisors of its absolute value, and $(b*c)(n)=\sum_{d\mid |n|}b(d)c(n/d)$. We put $\sinc x=\sin(\pi x)/(\pi x)$, with value $1$ at zero; $\tau(n)$ denotes the number of positive divisors, whereas $\tau(\chi)$ denotes a character Gauss sum that will be introduced in Example \ref{sec:basic-characters}. Every outer arithmetic series below is taken after its inner Fourier sum has been completed, unless joint absolute convergence is stated.\\

We can now state the three main theorems of this work:

\begin{theorem}\label{thm:T1}
Let $b:\N\to\C$, $h:\N\to\N$ with $h(d)\to\infty$ as $d\to\infty$, and $\omega:\N\to\R\setminus\{0\}$ be arithmetic functions. Let $g,c:\R\to\C$ be continuous with $c(0)=0$, and define
\begin{equation}\label{eq:T1eq1}
 F_d(y)=g(h(d)y)c\left(\frac{h(d)y}{\omega(d)}\right).
\end{equation}
Suppose that for each $d$ the function $F_d$ is Poisson admissible and
\begin{equation}\label{eq:T1eq2}
 \sum_{d=1}^\infty|b(d)|\left(\sum_{r\in\Z}|F_d(r)|\right)<\infty.
\end{equation} If, for $n\ne0$, we define
\begin{equation}\label{eq:T1eq3}
 B(n)=\sum_{h(d)\mid n}b(d)c\left(\frac n{\omega(d)}\right),
\end{equation}
then
\begin{equation}\label{eq:T1eq4}
 \sum_{n\ne0}g(n)B(n)=\sum_{d=1}^\infty b(d)\left(\sum_{r\in\Z}\widehat F_d(r)\right),
\end{equation}
where the summation is taken in the above order.
\end{theorem}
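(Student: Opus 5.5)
Apply ordinary Poisson summation to each $F_d$ separately and then sum over $d$; the hypothesis \eqref{eq:T1eq2} is there to justify interchanging the sum over $n$ with the sum over $d$. So the plan is to rewrite the left side of \eqref{eq:T1eq4} as $\sum_d b(d)\sum_{r\in\Z}F_d(r)$, and then apply Poisson to each inner sum.

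\textbf{Step 1 (the $r=0$ term).} Evaluate $F_d$ on the integers:
$$F_d(r)=g(h(d)r)\,c\bigl(h(d)r/\omega(d)\bigr).$$
For $r=0$ this is $g(0)c(0)=0$, because $c(0)=0$. Hence
$$\sum_{r\in\Z}F_d(r)=\sum_{r\neq 0}F_d(r).$$
This is the only place where $c(0)=0$ is used. It replaces the correction term $-df(0)$ in Berndt's formula \eqref{eq:berndteq2}.

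\textbf{Step 2 (the double series).} Consider
$$\Sigma=\sum_{d\ge1}\sum_{r\ne0} b(d)\,g(h(d)r)\,c\bigl(h(d)r/\omega(d)\bigr).$$
By \eqref{eq:T1eq2} it converges absolutely, so it may be rearranged freely. Reindex by $n=h(d)r$, which ranges over the nonzero multiples of $h(d)$. For fixed $n\ne0$, collect the pairs $(d,r)$ with $h(d)\mid n$; each such $d$ determines $r=n/h(d)$ uniquely. The number of such $d$ is finite, because $h(d)\to\infty$ forces $h(d)\le|n|$ for only finitely many $d$. This finiteness makes $B(n)$ in \eqref{eq:T1eq3} a finite sum, although absolute convergence would suffice for the rearrangement anyway. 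Grouping gives
$$\Sigma=\sum_{n\ne0}g(n)\sum_{h(d)\mid n}b(d)\,c\bigl(n/\omega(d)\bigr)=\sum_{n\ne0}g(n)B(n),$$
and the outer sum converges absolutely since $\sum_n|g(n)B(n)|\le\sum_d|b(d)|\sum_r|F_d(r)|$.

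\textbf{Step 3 (Poisson).} Summing first over $r$ instead gives
$$\Sigma=\sum_d b(d)\sum_{r\in\Z}F_d(r).$$
Poisson admissibility of $F_d$ gives $\sum_{r}F_d(r)=\sum_{r}\widehat F_d(r)$, with the right side summed in whatever order admissibility prescribes. Substituting yields \eqref{eq:T1eq4}, with the $d$-sum taken outside the completed inner Fourier sum, as the statement stipulates.

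\textbf{Main obstacle.} The only delicate point is justifying the interchange in Step 2. I would handle it entirely through the absolute-convergence hypothesis \eqref{eq:T1eq2} and Fubini for counting measure. No convergence of $\sum_d b(d)\sum_r\widehat F_d(r)$ beyond what is inherited termwise is needed, since each inner sum equals the absolutely summable $\sum_rF_d(r)$. This is why the theorem says the summation is taken in the stated order.
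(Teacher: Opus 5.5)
Your proposal is correct and follows essentially the same route as the paper. The paper invokes its general APSF (Theorem \ref{thm:gen_APSF}) with period $H(d)=1$ and weight $W_d=1$, which reduces exactly to your blockwise Poisson summation justified by \eqref{eq:T1eq2}; it then drops the vanishing origin term using $c(0)=0$ and regroups the absolutely convergent lattice side by $n=h(d)j$, as you do.
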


\begin{theorem}\label{thm:vector-dilation}
Let $P\in\Z[x_1,\ldots,x_n]$ satisfy $P(0)=0$, let $b:\N\to\C$, $h:\N\to\N$, and $\omega:\N\to\R\setminus\{0\}$, and assume $h(d)\to\infty$. Let $g:\R^n\to\C$ and $c:\R\to\C$ be continuous, with $c(0)=0$, and define
$$
 F_d(\xx)=g(h(d)\xx)c\left(\frac{P(h(d)\xx)}{\omega(d)}\right).
$$
Assume Poisson admissibility of every $F_d$ and absolute convergence of
\begin{equation}\label{eq:vec-hyp}
 \sum_{d\ge1}|b(d)|\sum_{\vv\in\Z^n}|F_d(\vv)|<\infty.
\end{equation}
For $\vv\ne0$, define
$$
 B_{\rm vec}(\vv)=\sum_{h(d)\mid\gcd(\vv)}b(d)c\left(\frac{P(\vv)}{\omega(d)}\right).
$$
Then
\begin{equation}\label{eq:vector-dilation}
 \sum_{\vv\in\Z^n\setminus\{0\}}g(\vv)B_{\rm vec}(\vv)
 =\sum_{d\ge1}b(d)\left(\sum_{\mm\in\Z^n}\widehat F_d(\mm)\right).
\end{equation}
\end{theorem}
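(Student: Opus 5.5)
The plan is to follow the proof of Theorem \ref{thm:T1}: apply Poisson summation to each $F_d$ separately, then interchange the order of summation on the left using the absolute convergence hypothesis \eqref{eq:vec-hyp}.

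\emph{Step 1: the lattice sum of $F_d$.} Fix $d$ and write $H=h(d)$. Poisson admissibility of $F_d$ on $\Z^n$ gives
$$
\sum_{\mm\in\Z^n}\widehat F_d(\mm)=\sum_{\uu\in\Z^n}F_d(\uu)=\sum_{\uu\in\Z^n}g(H\uu)\,c\!\left(\frac{P(H\uu)}{\omega(d)}\right).
$$
The term $\uu=0$ vanishes because $P(0)=0$ and $c(0)=0$. Now substitute $\vv=H\uu$. As $\uu$ runs over $\Z^n\setminus\{0\}$, the vector $\vv$ runs exactly over the nonzero vectors all of whose coordinates are divisible by $H$, that is, those with $H\mid\gcd(\vv)$. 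Hence
$$
\sum_{\mm\in\Z^n}\widehat F_d(\mm)=\sum_{\substack{\vv\ne0\\ h(d)\mid\gcd(\vv)}}g(\vv)\,c\!\left(\frac{P(\vv)}{\omega(d)}\right).
$$
Here $\gcd(\vv)$ is the gcd of the coordinates, and it is positive for $\vv\neq0$.

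The Fourier-side expression is only needed in the stated form, so no computation of $\widehat F_d$ is required. This is where the vector version is simpler than a congruence version: there is no exponential-sum detector such as $S_h(d,x)$. Dilation alone encodes the divisibility condition.

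\emph{Step 2: summing over $d$.} Multiply the identity from Step 1 by $b(d)$ and sum over $d\ge1$. The right-hand side of \eqref{eq:vector-dilation} then equals
$$
\sum_{d\ge1}\ \sum_{\substack{\vv\ne0\\ h(d)\mid\gcd(\vv)}}b(d)\,g(\vv)\,c\!\left(\frac{P(\vv)}{\omega(d)}\right).
$$
By \eqref{eq:vec-hyp} this double series converges absolutely, since its terms are exactly $b(d)F_d(\uu)$ after reindexing. Fubini for series therefore allows us to swap the two sums. For each fixed $\vv\ne0$, the inner sum over $d$ is precisely $g(\vv)B_{\rm vec}(\vv)$.

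We must also check that $B_{\rm vec}(\vv)$ is well defined, namely that its defining series converges absolutely for each $\vv$. This follows from the same absolute convergence: the terms with fixed $\vv$ form a subfamily of an absolutely summable family.

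The assumption $h(d)\to\infty$ ensures that only finitely many $d$ have $h(d)\mid\gcd(\vv)$ when the divisibility condition is nontrivial. This gives an alternative justification that $B_{\rm vec}(\vv)$ is a finite sum, at least when $h$ takes each value only finitely often. It is not strictly needed given \eqref{eq:vec-hyp}.

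\emph{Main obstacle.} The only delicate point is the interchange of the $d$-sum with the lattice sum, together with the fact that Poisson summation produces the Fourier side in a conditional order. I would handle this by using the Poisson identity only pointwise in $d$, which is justified by admissibility. All rearrangement is done on the spatial side, where \eqref{eq:vec-hyp} provides absolute convergence. The outer $d$-sum on the Fourier side is therefore taken after each inner Fourier sum has been completed, consistent with the paper's convention, and no absolute convergence of $\sum_d|b(d)|\sum_{\mm}|\widehat F_d(\mm)|$ is required.
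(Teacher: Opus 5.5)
Your proposal is correct and is essentially the paper's proof: the paper invokes the general APSF with $H(d)=1$ and $W_d=1$, which amounts to Poisson summation on each $F_d$ followed by the same absolutely convergent regrouping $\vv=h(d)\jj$ that you carry out. One small correction: $h(d)\to\infty$ already forces each value of $h$ to be taken only finitely often, so $B_{\rm vec}(\vv)$ is always a finite sum; this hypothesis is genuinely needed at points where $g(\vv)=0$, since there \eqref{eq:vec-hyp} says nothing about the terms of $B_{\rm vec}(\vv)$.
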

\newpage 

\begin{theorem}\label{thm:polynomial-T1}
Let $P\in\Z[x_1,\ldots,x_n]$, let $b:\N\to\C$, $h:\N\to\N$, and $\omega:\N\to\R\setminus\{0\}$, and assume $h(d)\to\infty$. Let $g:\R^n\to\C$ and $c:\R\to\C$ be continuous, with $c(0)=0$, and put
$$
 F_d(\xx)=g(\xx)c\left(\frac{P(\xx)}{\omega(d)}\right).
$$
Assume each $F_d$ is Poisson admissible on every residue class modulo $h(d)$ and
\begin{equation}\label{eq:poly-hyp}
 \sum_{d\ge1}|b(d)|\sum_{\substack{\vv\in\Z^n\\h(d)\mid P(\vv)}}|F_d(\vv)|<\infty.
\end{equation}
Let $B$ be the function of Theorem \ref{thm:T1}, extended by $B(0)=0$; explicitly, for $m\ne0$,
$$
 B(m)=\sum_{h(d)\mid m}b(d)c\left(\frac m{\omega(d)}\right),\qquad B(0)=0.
$$
Then
\begin{equation}\label{eq:poly-T1}
 \sum_{\vv\in\Z^n}g(\vv)B(P(\vv))
 =\sum_{d\ge1}\frac{b(d)}{h(d)^n}
 \left(\sum_{\mm\in\Z^n}\mathcal G_{P,h(d)}(\mm)\widehat F_d\left(\frac \mm{h(d)}\right)\right),
\end{equation}
where $\mathcal G_{P,q}(\mm)=\sum_{\uu\in C_q(P)}\en{\mm\cdot \uu/q}$ and $C_q(P)=\{\uu\in(\Z/q\Z)^n:P(\uu)\equiv0\pmod q\}$. The parentheses indicate the order of summation; they may be removed under joint absolute convergence in $d,\mm$.
\end{theorem}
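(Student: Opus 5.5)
We prove Theorem \ref{thm:polynomial-T1} by expanding $B(P(\vv))$ through its definition, exchanging the $\vv$- and $d$-sums, and then applying Poisson summation for each $d$ on every residue class modulo $q=h(d)$. Since $c(0)=0$, we may write
$$
 g(\vv)B(P(\vv))=\sum_{d\ge1}b(d)\,\mathbf 1_{h(d)\mid P(\vv)}\,g(\vv)c\left(\frac{P(\vv)}{\omega(d)}\right)
 =\sum_{d\ge1}b(d)\,\mathbf 1_{h(d)\mid P(\vv)}F_d(\vv).
$$
This is valid for every $\vv$. When $P(\vv)=0$, every $h(d)$ divides $0$, but each term carries the factor $c(0)=0$, which matches the convention $B(0)=0$. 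When $P(\vv)\neq 0$, the $d$-sum is exactly the definition of $B(P(\vv))$; it is finite in effect, because $h(d)\to\infty$ leaves only finitely many $d$ with $h(d)\mid P(\vv)$. Hypothesis \eqref{eq:poly-hyp} gives absolute convergence of the double sum over $(\vv,d)$. By Fubini,
$$
 \sum_{\vv}g(\vv)B(P(\vv))=\sum_{d\ge1}b(d)\sum_{\substack{\vv\in\Z^n\\ h(d)\mid P(\vv)}}F_d(\vv).
$$

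Next, fix $d$ and set $q=h(d)$. Since $P$ has integer coefficients, the condition $q\mid P(\vv)$ depends only on $\vv\bmod q$. Hence the inner sum equals $\sum_{\uu\in C_q(P)}\sum_{\mathbf k\in\Z^n}F_d(\uu+q\mathbf k)$, where each $\uu$ is taken as a fixed representative in $\{0,\dots,q-1\}^n$. For each class, apply Poisson summation to $\xx\mapsto F_d(\uu+q\xx)$; this is legitimate by the assumed admissibility on residue classes. The Fourier transform of this function is $q^{-n}\widehat F_d(\mm/q)\,\en{\mm\cdot\uu/q}$, obtained from the substitution $\yy=\uu+q\xx$ under the convention $\widehat f(\xxi)=\int f(\xx)\ee{\xx\cdot\xxi}\,d\xx$. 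The sign of the phase should be double-checked against this convention, so that it produces $\mathcal G_{P,q}$ as written rather than its conjugate. Therefore
$$
 \sum_{\mathbf k}F_d(\uu+q\mathbf k)=\frac1{q^n}\sum_{\mm\in\Z^n}\widehat F_d\left(\frac\mm q\right)\en{\frac{\mm\cdot\uu}q}.
$$

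Summing over the finitely many $\uu\in C_q(P)$ and interchanging with the $\mm$-sum is harmless, since the $\uu$-sum is finite. This gives $q^{-n}\sum_{\mm}\mathcal G_{P,q}(\mm)\widehat F_d(\mm/q)$. The phase $\en{\mm\cdot\uu/q}$ depends only on $\uu\bmod q$, so $\mathcal G_{P,q}$ is well defined. If the $\mm$-sums for individual $\uu$ are only conditionally convergent in the admissible sense, we interpret them in the same symmetric order for each $\uu$, and the finite linear combination is still fine. Multiplying by $b(d)$ and summing over $d$, in the order inherited from the first step, yields \eqref{eq:poly-T1}. Under joint absolute convergence in $(d,\mm)$, the parentheses may be dropped by Fubini.

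The main obstacle is the first interchange, together with the precise meaning of ``Poisson admissible on residue classes''. We must make sure that \eqref{eq:poly-hyp} is exactly the condition that controls $\sum_d\sum_{\vv}$ absolutely, including the treatment of the zero set $P(\vv)=0$. Everything else reduces to the change of variables in the Fourier transform and careful bookkeeping of the sign of the exponential.
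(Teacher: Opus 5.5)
Your proof is correct and follows the paper's route: the paper applies its general APSF (Theorem \ref{thm:gen_APSF}) with $H(d)=h(d)$, $W_d=\mathbf 1_{h(d)\mid P(\cdot)}$ and $f_d=F_d$, whose proof is exactly your Fubini step followed by Poisson summation on each residue class, and then regroups the left side using $c(0)=0$ and $h(d)\to\infty$ just as you do. The phase you flagged is right: under the paper's convention the substitution $\yy=\uu+q\xx$ produces $\en{\mm\cdot\uu/q}$, so the finite sum is $\mathcal G_{P,q}$ and not its conjugate.
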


\begin{remark}
Let $h(d)=\omega(d)=d$. Then
\begin{equation}\label{eq:berndt-cor}
 \sum_{n\ne0}g(n)(b*c)(n)
 =\sum_{d\ge1}b(d)\left(\sum_{r\in\Z}\int_\R\ee{ry}g(dy)c(y)\,dy\right).
\end{equation}
This is the interpolated form of the scalar identity. Berndt's formula itself is recovered by the origin-corrected case $c\equiv1$ in Remark \ref{rem:zero-term}; a restriction $d\in S$ is included by replacing $b(d)$ with $b(d)\mathbf1_S(d)$.
\end{remark}
\begin{remark}
Patkowski \cite[Theorem 1.1]{Patkowski21} obtains a related formula using the cosine Fourier transform and Mellin inversion. In his notation, with $C(f)(x)=\int_0^\infty f(t)\cos(2\pi xt)\,dt$ and $M(f)$ the Mellin transform, one such formula reads
\begin{equation}\label{eq:patkowski}
 \sum_{n\ge1}a(n)C(f)(n)
 =\frac12\sum_{m\ge1}\frac{b(m)}m
 \left\{M^{-1}(\zeta(s)M(f)(s))(1/m)+\frac{f(0)}2\right\}.
\end{equation}
Here $M(f)(s)=\int_0^\infty f(x)x^{s-1}\,dx$, and the inverse Mellin integral in the displayed formula is taken on a vertical line $0<\Re s<1$, under the hypotheses of the cited formula.
\end{remark}

\begin{remark}
It is important to notice that the theorems above all follow from the periodic-weight identity of Theorem \ref{thm:gen_APSF}. Although the formula in Theorem \ref{thm:gen_APSF} is more concise and easier to state, it masks the arithmetic nature of several applications: readers interested in more general versions of these results will not find it too difficult to formulate the theorems in more general settings, including sums over lattices and number fields, and, under the hypotheses given in Section \ref{sec:final}, weights with absolutely convergent exponential expansions. Our choice of presentation is stylistic: all those versions of the Poisson summation formula are classical and we don't need to state them in the most general form to obtain our main results, which are convenient formulations of the Poisson summation formulas for applications to arithmetic problems. 
\end{remark}

Now that we have this collection of arithmetic Poisson summation formulas (which we will call APSF for short), we need to understand how these results are useful: we can exploit them to obtain or recover results already known by other techniques; we can isolate some arithmetic data in a counting problem and use this to break sums into simpler ones; we can use them to obtain new formulas for quantities already known; and we can use them simply as a tool inside a larger argument to systematize the process, which can be useful in several situations. Before presenting some interesting applications in the next sections, let us spend a few lines stating immediate corollaries that we hope will show the usefulness of such theorems:

\begin{corollary}[Primitive-vector transform]\label{cor:primitive-vector}
Take $h(d)=d$, $b(d)=\mu(d)$, $\omega(d)=1$, $P(\vv)=|\vv|^2$ and $c(x)=1-\sinc x$ in Theorem \ref{thm:vector-dilation}. Then $c(P(\vv))=1$ at every nonzero integer vector. Since
$$
 \sum_{d\mid\gcd(\vv)}\mu(d)=\mathbf1_{\gcd(\vv)=1},
$$
for Schwartz $g$ we obtain the explicit formula
\begin{equation}\label{eq:primitive-explicit}
 \sum_{\substack{\vv\in\Z^n\\\gcd(\vv)=1}}g(\vv)
 =\sum_{d\ge1}\mu(d)\left(\sum_{\mm\in\Z^n}\widehat F_d(\mm)\right),
 \qquad F_d(\xx)=g(d\xx)\{1-\sinc(d^2|\xx|^2)\}.
\end{equation}
The sinc factor removes the origin and equals $1$ at every other lattice sample. Each $F_d$ is Schwartz; rapid decay bounds the outer arithmetic absolute sum by $\sum_{d,\vv\ne0}|g(d\vv)|<\infty$. More generally the formula holds under the hypotheses of Theorem \ref{thm:vector-dilation}.
\end{corollary}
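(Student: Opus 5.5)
The corollary is a direct specialization of Theorem \ref{thm:vector-dilation}. The plan is to check the hypotheses for the stated choices, identify $B_{\rm vec}$, and read off \eqref{eq:vector-dilation}.

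\textbf{Step 1: the data.} Set $h(d)=d$, $b=\mu$, $\omega\equiv1$, $P(\xx)=|\xx|^2=x_1^2+\cdots+x_n^2\in\Z[x_1,\ldots,x_n]$ and $c(x)=1-\sinc x$. Then $P(0)=0$ and $h(d)\to\infty$. The function $c$ is continuous (indeed entire) and $c(0)=1-1=0$. For every integer $k\neq0$ we have $\sin(\pi k)=0$, so $c(k)=1$. Since $P(\vv)$ is a positive integer for $\vv\in\Z^n\setminus\{0\}$, this gives $c(P(\vv))=1$ there.

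\textbf{Step 2: identify $B_{\rm vec}$.} For $\vv\neq0$,
$$
B_{\rm vec}(\vv)=\sum_{d\mid\gcd(\vv)}\mu(d)\,c(P(\vv))=\sum_{d\mid\gcd(\vv)}\mu(d)=\mathbf 1_{\gcd(\vv)=1}.
$$
So the left side of \eqref{eq:vector-dilation} is the sum of $g$ over primitive vectors. Also $F_d(\xx)=g(d\xx)\,c(|d\xx|^2)=g(d\xx)\{1-\sinc(d^2|\xx|^2)\}$, which is the kernel displayed in \eqref{eq:primitive-explicit}.

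\textbf{Step 3: verify the hypotheses for Schwartz $g$.} First, $\xx\mapsto 1-\sinc(d^2|\xx|^2)$ is smooth, because $\sinc$ is entire and even, so it is a power series in $|\xx|^2$. It is bounded by $2$, and all of its derivatives grow at most polynomially: each derivative of $\sinc(d^2|\xx|^2)$ is a polynomial in $\xx$ times derivatives of $\sinc$, which are bounded. The product of a Schwartz function with such a function is Schwartz, so $F_d$ is Schwartz and hence Poisson admissible.

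Second, $F_d(0)=0$ and $F_d(\vv)=g(d\vv)$ for $\vv\neq0$. Therefore
$$
\sum_{d}|\mu(d)|\sum_{\vv}|F_d(\vv)|\le\sum_{d\ge1}\sum_{\vv\ne0}|g(d\vv)|.
$$
To bound the right side, use $|g(\yy)|\ll(1+|\yy|)^{-n-2}$ and $|d\vv|\ge d|\vv|$. This gives
$$
\sum_{d\ge1}\sum_{\vv\neq0}|g(d\vv)|\ll\sum_{d\ge1}d^{-n-2}\sum_{\vv\ne0}|\vv|^{-n-2}<\infty,
$$
which is \eqref{eq:vec-hyp}.

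Theorem \ref{thm:vector-dilation} then gives \eqref{eq:primitive-explicit}. For general $g$ the same specialization applies verbatim, with the hypotheses of Theorem \ref{thm:vector-dilation} assumed directly.

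The only step needing care is the Schwartz property of $F_d$, specifically the polynomial control of the derivatives of $\sinc(d^2|\xx|^2)$. The rest is bookkeeping.
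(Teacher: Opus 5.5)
Your proposal is correct and follows the paper's own argument: specialize Theorem \ref{thm:vector-dilation}, use $c(k)=1$ at nonzero integers together with the M\"obius identity to get $B_{\rm vec}(\vv)=\mathbf 1_{\gcd(\vv)=1}$, note that each $F_d$ is Schwartz, and bound the absolute sum by $\sum_{d,\vv\ne0}|g(d\vv)|$. Your explicit decay estimate $|g(\yy)|\ll(1+|\yy|)^{-n-2}$ just fills in the ``rapid decay'' step that the paper leaves implicit.
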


\begin{corollary}[Eisenstein $q$-expansion]\label{cor:base-eisenstein}\label{prop:t1-eisenstein}
Let $Y>0$ and let $k\ge2$ be a fixed integer. In Theorem \ref{thm:T1}, take $h(d)=\omega(d)=d$ and
$$
 b(d)=1,\qquad c(x)=|x|^{2k-1},\qquad g(x)=\exp(-2\pi Y|x|).
$$
Then
\begin{align}\label{eq:t1-g}
 \sum_{n\ge1}\sigma_{2k-1}(n)\exp(-2\pi Yn)
 &=\frac{\Gamma(2k)}{(2\pi)^{2k}}\sum_{d\ge1}\sum_{r\in\Z}\Re(Yd-ir)^{-2k}\notag\\
 &=\frac{(-1)^k\Gamma(2k)}{2(2\pi)^{2k}}\{G_{2k}(iY)-2\zeta(2k)\},
\end{align}
where $G_{2k}(z)=\sum_{(c,d)\in\Z^2\setminus\{0\}}(cz+d)^{-2k}$ is the Eisenstein series of weight $2k$ for $\mathrm{PSL}(2,\Z)$. Holomorphic continuation gives its $q$-expansion from this identity.
\end{corollary}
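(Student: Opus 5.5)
The plan is to specialize Theorem \ref{thm:T1} with the stated data, identify both sides explicitly, and then regroup the right-hand side as an Eisenstein series.

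\textbf{Left-hand side.} With $h(d)=\omega(d)=d$, $b\equiv1$ and $c(x)=|x|^{2k-1}$, the weight \eqref{eq:T1eq3} is
$$B(n)=\sum_{d\mid n}|n/d|^{2k-1}=\sigma_{2k-1}(|n|).$$
Since $g$ is even, the left side of \eqref{eq:T1eq4} equals $2\sum_{n\ge1}\sigma_{2k-1}(n)e^{-2\pi Yn}$.

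\textbf{Right-hand side.} Here
$$F_d(y)=e^{-2\pi Yd|y|}\,d^{2k-1}|y|^{2k-1}\cdot d^{-(2k-1)}=e^{-2\pi Yd|y|}|y|^{2k-1},$$
since $h(d)y/\omega(d)=y$. Splitting the integral at $0$ and using a Gamma integral gives
$$\widehat F_d(r)=2\Re\int_0^\infty y^{2k-1}e^{-2\pi(Yd-ir)y}\,dy=\frac{2\Gamma(2k)}{(2\pi)^{2k}}\Re(Yd-ir)^{-2k}.$$
Dividing both sides by $2$ then yields the first equality in \eqref{eq:t1-g}.

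\textbf{Hypotheses of Theorem \ref{thm:T1}.}
\begin{itemize}
\item $g$ and $c$ are continuous and $c(0)=0$.
\item Each $F_d$ is integrable and continuous. It is $C^{2k-2}$ at the origin and smooth elsewhere, and its Fourier transform is $O(|r|^{-2k})$. Hence $F_d$ is Poisson admissible.
\item For \eqref{eq:T1eq2} we need
$$\sum_{d\ge1}\sum_{r\ne0}|r|^{2k-1}e^{-2\pi Yd|r|}=2\sum_{r\ge1}\frac{r^{2k-1}e^{-2\pi Yr}}{1-e^{-2\pi Yr}}<\infty,$$
which is clear.
\end{itemize}
So the theorem applies.

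\textbf{Second equality.} Put $z=iY$. Then $Yd-ir=-i(r+dz)$, so
$$(Yd-ir)^{-2k}=(-i)^{-2k}(r+dz)^{-2k}=(-1)^k(r+dz)^{-2k}.$$
For fixed $d$, the substitution $r\mapsto-r$ together with evenness of the exponent shows that $\sum_r(r+dz)^{-2k}$ equals its own complex conjugate. Hence the inner sum is real and the $\Re$ may be dropped.

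Since $2k\ge4$, the double series $\sum_{(c,d)\ne0}|cz+d|^{-2k}$ converges absolutely, so we may regroup freely. Writing $G_{2k}(z)$ as the contribution $2\zeta(2k)$ from $c=0$, plus the terms with $c\ge1$, plus the terms with $c\le-1$, and using $(-cz-d)^{-2k}=(cz+d)^{-2k}$, we get
$$\sum_{d\ge1}\sum_{r\in\Z}(dz+r)^{-2k}=\tfrac12\{G_{2k}(z)-2\zeta(2k)\}.$$
Multiplying by $\Gamma(2k)(2\pi)^{-2k}(-1)^k$ gives the second equality.

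\textbf{Continuation.} Both $\sum_{n\ge1}\sigma_{2k-1}(n)e^{2\pi inz}$ and $G_{2k}(z)$ are holomorphic on the upper half-plane. They agree on the imaginary axis by the identity just proved, so by the identity theorem they agree everywhere on the upper half-plane. This yields the $q$-expansion.

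The only step needing real care is the admissibility bookkeeping, namely Fourier decay of $F_d$ at the order dictated by the $|y|^{2k-1}$ cusp; it is mild because $k\ge2$.
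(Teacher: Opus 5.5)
Your proposal is correct and is essentially the argument the paper intends: the corollary is stated without a separate proof, as a direct specialization of Theorem \ref{thm:T1}, and your steps supply exactly the omitted details, namely $B(n)=\sigma_{2k-1}(|n|)$, the Gamma-integral transform $\widehat F_d(r)=2\Gamma(2k)(2\pi)^{-2k}\Re(Yd-ir)^{-2k}$, the check of \eqref{eq:T1eq2}, and the regrouping of $G_{2k}(iY)$ into the rows $c=0$, $c>0$, $c<0$. The only slip is cosmetic: in the continuation step, the functions that agree on the imaginary axis, and hence on the upper half-plane, are $G_{2k}(z)$ and $2\zeta(2k)+2(2\pi i)^{2k}\Gamma(2k)^{-1}\sum_{n\ge1}\sigma_{2k-1}(n)e^{2\pi inz}$, not the two functions you name.
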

\begin{corollary}[Polynomial values of a Dirichlet convolution]\label{cor:poly-convolution}
Set $h(d)=\omega(d)=d$. With positive divisors of $|P(\vv)|$ and the signed quotient for negative values,
\begin{equation}\label{eq:poly-convolution}
 \sum_{\vv\in\Z^n}g(\vv)(b*c)(P(\vv))
 =\sum_{d\ge1}\frac{b(d)}{d^n}
 \left(\sum_{\mm\in\Z^n}\mathcal G_{P,d}(\mm)\widehat{[g(\cdot)c(P(\cdot)/d)]}(\mm/d)\right),
\end{equation}
provided Theorem \ref{thm:polynomial-T1} applies, with the value at $P(\vv)=0$ defined as zero.
\end{corollary}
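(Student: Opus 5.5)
This corollary is a direct specialization of Theorem \ref{thm:polynomial-T1}. Take $h(d)=\omega(d)=d$. Then $F_d(\xx)=g(\xx)c(P(\xx)/d)$, so its Fourier transform evaluated at $\mm/h(d)=\mm/d$ is exactly $\widehat{[g(\cdot)c(P(\cdot)/d)]}(\mm/d)$. The Gauss-type sum $\mathcal G_{P,h(d)}$ becomes $\mathcal G_{P,d}$, and the prefactor $b(d)/h(d)^n$ becomes $b(d)/d^n$. So the right-hand side of \eqref{eq:poly-T1} coincides term by term with the right-hand side of \eqref{eq:poly-convolution}, with the same order of summation: inner sum over $\mm$ first, then the sum over $d$.

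Next I would identify the left-hand side. With $h(d)=\omega(d)=d$, for $m\ne0$ the definition gives
\[
B(m)=\sum_{d\mid m}b(d)\,c\left(\frac{m}{d}\right),
\]
where $d$ runs over the positive divisors of $|m|$ and $m/d$ is the signed quotient. By the convention fixed in the Notations paragraph, this is $(b*c)(m)$, read with the signed quotient when $m<0$; this is precisely the interpretation stated in the corollary. Also $B(0)=0$, which matches the convention that the convolution is set to zero at $P(\vv)=0$. Hence $g(\vv)B(P(\vv))=g(\vv)(b*c)(P(\vv))$ for every $\vv\in\Z^n$, and the left-hand sides agree.

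Finally, the hypotheses carry over directly. The phrase ``provided Theorem \ref{thm:polynomial-T1} applies'' means that each $F_d$ is Poisson admissible on every residue class modulo $d$ and that \eqref{eq:poly-hyp} holds with $h(d)=d$. The remaining requirements, $h(d)=d\to\infty$ and $\omega(d)=d\neq0$, hold automatically. There is no real obstacle here. The only point needing care is the sign convention: when $P(\vv)<0$, the argument of $c$ must be the negative number $P(\vv)/d$ rather than $|P(\vv)|/d$. That is exactly what Theorem \ref{thm:polynomial-T1} produces, since it evaluates $c(P(\xx)/\omega(d))$ at the value $P(\vv)$ itself.
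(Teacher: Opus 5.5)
Your proposal is correct and is exactly the direct specialization the paper intends: with $h(d)=\omega(d)=d$, Theorem \ref{thm:polynomial-T1} turns $B(m)$ into $(b*c)(m)$ under the paper's signed-quotient convention (with $B(0)=0$), and its right-hand side becomes, term by term and in the same order of summation, the one in \eqref{eq:poly-convolution}. One small addition: ``Theorem \ref{thm:polynomial-T1} applies'' also includes the standing hypotheses $c(0)=0$ and continuity of $g,c$, and it is $c(0)=0$ that makes the divisor sum over all $d$ at $P(\vv)=0$ vanish, consistent with the convention that the value there is zero.
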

These corollaries illustrate three ways of reorganizing arithmetic sums through APSF.

\subsection{Variants of Theorem \ref{thm:T1}}
Besides the summation formulas above, there are variants useful to isolate for applications. They are stated as individual theorems, and Section 2 derives each one from Theorem \ref{thm:gen_APSF}.
\begin{theorem}\label{T2}
Let $b_0$ be arithmetic and let $g_0,c_1,\ldots,c_\ell:\R\to\C$ be continuous functions of the same parity, with $c_j(0)=0$ for every $j$. Define
$$
 F_d(y_1,\ldots,y_\ell)=g_0(dy_1\cdots y_\ell)c_1(y_1)\cdots c_\ell(y_\ell).
$$
Assume each $F_d$ is Poisson admissible on $\R^\ell$ and
\begin{equation}\label{eq:T2-grouped-hyp}
 \sum_{d\ge1}|b_0(d)|\sum_{\mm\in\Z^\ell}|F_d(\mm)|<\infty.
\end{equation}
Then
\begin{equation}\label{eq:T2}
 2^\ell\sum_{n\ge1}g_0(n)(b_0*c_1*\cdots*c_\ell)(n)
 =\sum_{d\ge1}b_0(d)\left(\sum_{r_1,\ldots,r_\ell\in\Z}I(d;r_1,\ldots,r_\ell)\right),
\end{equation}
where
$$
 I(d;r_1,\ldots,r_\ell)=\int_{\R^\ell}\ee{r_1y_1+\cdots+r_\ell y_\ell}
 g_0(dy_1\cdots y_\ell)\prod_{j=1}^\ell c_j(y_j)\,d\yy.
$$
\end{theorem}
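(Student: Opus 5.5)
The idea is to run the divisor-detection argument behind Theorem \ref{thm:T1} on the lattice $\Z^\ell$ instead of on $\Z$. For a fixed $d$, apply the Poisson summation formula on $\R^\ell$ to $F_d$, which is allowed because $F_d$ is assumed Poisson admissible:
$$
\sum_{\mm\in\Z^\ell}F_d(\mm)=\sum_{\rr\in\Z^\ell}\widehat F_d(\rr).
$$
The right-hand side is exactly $\sum_{r_1,\ldots,r_\ell}I(d;r_1,\ldots,r_\ell)$. The proof therefore reduces to evaluating the double sum $\sum_{d}b_0(d)\sum_{\mm\in\Z^\ell}F_d(\mm)$ arithmetically. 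The grouped hypothesis \eqref{eq:T2-grouped-hyp} makes this double series absolutely convergent, so I may rearrange it freely. The outer order of summation in \eqref{eq:T2} is the one inherited from applying Poisson summation for each fixed $d$.

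\textbf{Step 1: discard the boundary terms.} If some coordinate $m_j=0$, then $c_j(0)=0$ forces $F_d(\mm)=0$. Hence only vectors with all $m_j\ne0$ contribute.

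\textbf{Step 2: reduce to positive coordinates using parity.} Suppose $g_0$ and all $c_j$ have the same parity $\epsilon\in\{0,1\}$, meaning each function satisfies $f(-x)=(-1)^\epsilon f(x)$. Changing the sign of one coordinate $m_j$ multiplies $c_j(m_j)$ by $(-1)^\epsilon$. It also flips the sign of the product $m_1\cdots m_\ell$, so $g_0$ contributes a second factor $(-1)^\epsilon$. The two factors cancel, since $(-1)^{2\epsilon}=1$. Thus $F_d(\mm)=F_d(|m_1|,\ldots,|m_\ell|)$, and
$$
\sum_{\mm\in\Z^\ell}F_d(\mm)=2^\ell\sum_{m_1,\ldots,m_\ell\ge1}g_0(dm_1\cdots m_\ell)\,c_1(m_1)\cdots c_\ell(m_\ell).
$$

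\textbf{Step 3: regroup by $n$.} Multiply by $b_0(d)$ and sum over $d$; absolute convergence justifies the rearrangement. Collect all terms with $n=dm_1\cdots m_\ell\ge1$. The tuples $(d,m_1,\ldots,m_\ell)$ of positive integers with product $n$ give precisely the iterated Dirichlet convolution $(b_0*c_1*\cdots*c_\ell)(n)$, so the total is $2^\ell\sum_{n\ge1}g_0(n)(b_0*c_1*\cdots*c_\ell)(n)$. This is the left side of \eqref{eq:T2}. Alternatively, one can view the whole argument as the case $h(d)=d$ of Theorem \ref{thm:gen_APSF} on $\Z^\ell$ with the trivial periodic weight, but the direct route above is self-contained.

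\textbf{Main obstacle.} The key point is the justification in Step 3. Hypothesis \eqref{eq:T2-grouped-hyp} controls $\sum_d|b_0(d)|\sum_{\mm}|F_d(\mm)|$, which is exactly the absolute series being regrouped. So the regrouping is legitimate, while the Fourier side is only claimed in the stated iterated order. The one remaining care point is the parity bookkeeping in Step 2, specifically the case where $g_0$ and the $c_j$ are all odd.
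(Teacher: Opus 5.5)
Your proposal is correct and is essentially the paper's proof. The paper applies Theorem \ref{thm:gen_APSF} in dimension $\ell$ with period $H(d)=1$, weight $W_d\equiv1$ and test function $F_d$, which amounts to Poisson summation for each fixed $d$ as you do; it then removes the coordinate hyperplanes via $c_j(0)=0$, extracts the factor $2^\ell$ by the same parity cancellation, and regroups by $n=dm_1\cdots m_\ell$ under \eqref{eq:T2-grouped-hyp}. One small slip: your aside calling this the case ``$h(d)=d$'' of Theorem \ref{thm:gen_APSF} is slightly off, since the natural specialization is period $H(d)=1$ with the dilation carried by $F_d$, but this is harmless because a constant weight of any period collapses to ordinary Poisson summation.
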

\begin{theorem}\label{Tproduct}
Let $b_1,\ldots,b_\ell$ be arithmetic and let $g,c_1,\ldots,c_\ell:\R\to\C$ be continuous. Assume at least one $c_j(0)=0$. For $\dd=(d_1,\ldots,d_\ell)$ put
$$
 L(\dd)=\lcm(d_1,\ldots,d_\ell),\qquad
 H_{\dd}(y)=g(L(\dd)y)\prod_{j=1}^\ell c_j\left(\frac{L(\dd)y}{d_j}\right).
$$
Assume each $H_{\dd}$ is Poisson admissible and
\begin{equation}\label{eq:product-grouped-hyp}
 \sum_{d_1,\ldots,d_\ell\ge1}\left(\prod_{j=1}^\ell|b_j(d_j)|\right)\sum_{m\in\Z}|H_{\dd}(m)|<\infty.
\end{equation}
Then
\begin{equation}\label{eq:Tproduct}
 \sum_{n\ne0}g(n)\prod_{j=1}^\ell(b_j*c_j)(n)
 =\sum_{d_1,\ldots,d_\ell\ge1}\prod_{j=1}^\ell b_j(d_j)
 \left(\sum_{r\in\Z}\int_\R\ee{ry}H_{\dd}(y)\,dy\right).
\end{equation}
\end{theorem}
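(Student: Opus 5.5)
The plan is to reduce the statement to one-dimensional Poisson summation, applied separately for each tuple $\dd$, after a direct rearrangement of the left-hand side. I will not route it through Theorem \ref{thm:T1}, since the product structure produces an lcm condition rather than a single divisibility condition.

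First I expand the product of Dirichlet convolutions. By the stated convention, for $n\ne0$ each factor is $(b_j*c_j)(n)=\sum_{d_j\mid |n|}b_j(d_j)c_j(n/d_j)$, with positive divisors of $|n|$ and the signed quotient. Multiplying out gives
$$
 \prod_{j=1}^\ell(b_j*c_j)(n)=\sum_{\substack{d_1,\ldots,d_\ell\ge1\\ d_j\mid |n|\ \forall j}}\prod_{j=1}^\ell b_j(d_j)\,c_j\left(\frac n{d_j}\right).
$$
The simultaneous condition $d_j\mid n$ for all $j$ is equivalent to $L(\dd)\mid n$. So I write $n=L(\dd)m$ with $m\in\Z\setminus\{0\}$ signed. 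Then $c_j(n/d_j)=c_j(L(\dd)m/d_j)$ and $g(n)=g(L(\dd)m)$, and the general term becomes $\prod_j b_j(d_j)\,H_{\dd}(m)$.

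Next I interchange the order of summation. Formally,
$$
 \sum_{n\ne0}g(n)\prod_{j}(b_j*c_j)(n)=\sum_{\dd}\prod_j b_j(d_j)\sum_{m\ne0}H_{\dd}(m).
$$
The map $(\dd,m)\mapsto(\dd,n=L(\dd)m)$ is a bijection between $\{(\dd,m):m\ne0\}$ and $\{(\dd,n):n\ne0,\ L(\dd)\mid n\}$. Hence hypothesis \eqref{eq:product-grouped-hyp} gives absolute convergence of the double series, and Fubini justifies the regrouping. This also shows that the left-hand side converges absolutely as a series in $n$.

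Then I remove the restriction $m\ne0$. Since some $c_{j_0}(0)=0$, we get $H_{\dd}(0)=g(0)\prod_j c_j(0)=0$. Therefore $\sum_{m\ne0}H_{\dd}(m)=\sum_{m\in\Z}H_{\dd}(m)$.

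Finally, Poisson admissibility of $H_{\dd}$ gives
$$
 \sum_{m\in\Z}H_{\dd}(m)=\sum_{r\in\Z}\widehat H_{\dd}(r)=\sum_{r\in\Z}\int_\R \ee{ry}H_{\dd}(y)\,dy,
$$
with the inner sum in $r$ taken in whatever order admissibility provides. This is \eqref{eq:Tproduct}, with the outer sum over $\dd$ taken after the inner Fourier sum is completed, as the parentheses indicate.

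The only delicate point is the interchange step. The hypothesis controls exactly the sum over lattice samples $\sum_m|H_{\dd}(m)|$, not the Fourier side. So I must apply Poisson only after the regrouping, for each fixed $\dd$, and make no claim of absolute convergence in $(\dd,r)$. The bookkeeping for negative $n$ also has to be checked: divisors are positive, and the quotient $L(\dd)m/d_j$ carries the sign of $m$. With that convention, no parity assumption on $g$ or $c_j$ is needed, unlike in Theorem \ref{T2}.
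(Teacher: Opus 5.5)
Your proposal is correct and is essentially the paper's proof. The paper applies Theorem \ref{thm:gen_APSF} with period $1$, weight $1$, coefficients $\prod_j b_j(d_j)$ and test functions $H_{\dd}$, which is exactly your per-$\dd$ Poisson summation justified by \eqref{eq:product-grouped-hyp}; it then drops $m=0$ using $c_{j_0}(0)=0$ and regroups via $n=L(\dd)m$, and your explicit bijection $(\dd,m)\leftrightarrow(\dd,n)$ with $L(\dd)\mid n$ and the signed-quotient bookkeeping simply spell out that regrouping step.
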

\begin{remark}\label{base:compatibility-remark}
The simultaneous divisibility conditions in Theorem \ref{Tproduct} combine into a single least-common-multiple condition, as used in its proof in Section \ref{sec-conv}.
\end{remark}
\begin{theorem}\label{Tshift}\label{T5}
Let $b,b_0$ be arithmetic functions of at most polynomial growth. Let $c,c_0:\R\to\C$ be continuous and of at most polynomial growth, with $c(0)=0$ and $c_0(x)=0$ for $x\le0$. Let $g$ be continuous and exponentially decreasing at both ends of $\R$, and let $\kappa\ge0$ be an integer. Assume that $g(x)c(x/d)c_0((x+\kappa)/\delta)$ is Poisson admissible on each residue class modulo $d\delta$ for every $d,\delta\ge1$. Then
\begin{align}\label{eq:Tshift}
 &\sum_{n=-\kappa}^\infty g(n)(b*c)(n)
 \left\{\sum_{\delta\mid n+\kappa}b_0(\delta)c_0\left(\frac{n+\kappa}{\delta}\right)\right\}\notag\\
 &=\sum_{d,\delta\ge1}b(d)b_0(\delta)
 \left\{\sum_{k=0}^{\delta-1}\ee{\kappa k/\delta}
 \left(\sum_{r\in\Z}J(d,\delta,k,r)\right)\right\},
\end{align}
where $(b*c)(0)=0$, the bracket at $n=-\kappa$ is zero, and
$$
 J(d,\delta,k,r)=\int_\R\ee{(r\delta+kd)y}g(d\delta y)c(\delta y)c_0(dy+\kappa/\delta)\,dy.
$$
Each inner frequency sum is completed first. The resulting outer blocks are absolutely summable over $(d,\delta)$, so they may be enumerated in any order.
\end{theorem}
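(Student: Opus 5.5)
Expand both divisor sums, detect the two divisibility conditions with finite exponential sums, and apply Poisson summation in each residue class. First I would write the left side as
$$
\sum_{d,\delta\ge1}b(d)b_0(\delta)\sum_{\substack{n\ge-\kappa,\ d\mid n,\ \delta\mid n+\kappa}}g(n)c(n/d)c_0((n+\kappa)/\delta).
$$
The conventions make the boundary terms harmless: $c(0)=0$ kills $n=0$, and $c_0(x)=0$ for $x\le0$ kills $n=-\kappa$ and lets the range be extended to all $n\in\Z$. Exponential decay of $g$ and polynomial growth of $b,b_0,c,c_0$ give absolute convergence of the triple sum. Note that $c(n/d)\ne0$ only if $n\ne0$, so $d\le|n|$ and $\delta\le|n+\kappa|$. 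The count of admissible $(d,\delta)$ for a given $n$ is therefore polynomial in $|n|$, which justifies interchanging the $n$-sum with the $(d,\delta)$-sum.

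Next I would handle a fixed $(d,\delta)$. Writing $n=dm$, the inner sum is over $m\in\Z$ with $\delta\mid dm+\kappa$. I would detect this condition with
$$
\frac1\delta\sum_{k=0}^{\delta-1}\ee{k(dm+\kappa)/\delta},
$$
which is where the factor $\ee{\kappa k/\delta}$ in \eqref{eq:Tshift} comes from. For fixed $k$, classical Poisson summation applies to $G(x)=g(dx)c(x)c_0((dx+\kappa)/\delta)\ee{kdx/\delta}$. Admissibility is the hypothesis on residue classes modulo $d\delta$, and the twist only shifts frequencies. This gives $\sum_m G(m)=\sum_s\int G(x)\ee{sx}\,dx$.

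The remaining step is to match $J$ by the substitution $x=\delta y$:
$$
\frac1\delta\int G(x)\ee{sx}\,dx=\int g(d\delta y)c(\delta y)c_0(dy+\kappa/\delta)\ee{(s\delta+kd)y}\,dy ,
$$
using $\ee{(kd/\delta)\delta y}=\ee{kdy}$ and $\ee{s\delta y}$. This is exactly $J(d,\delta,k,s)$, provided the sign convention of $\widehat{\ }$ is consistent, which one checks once. Summing over $k$ and $s$, in that order and with the $s$-sum completed first, gives the braces.

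The main obstacle is the final claim that the outer blocks are absolutely summable over $(d,\delta)$. Each block equals $\sum_{d\mid n,\ \delta\mid n+\kappa}g(n)c(n/d)c_0((n+\kappa)/\delta)$. This is because the Poisson identity holds blockwise, so the block is independent of how its frequency sum was organized. Its absolute value is therefore bounded by $\sum_{n\ge1-\kappa}|g(n)|\,\mathrm{poly}(|n|)\,\mathbf1_{d\le|n|,\ \delta\le|n+\kappa|}$. Summing over $(d,\delta)$ with the weights $|b(d)b_0(\delta)|$ is then bounded by $\sum_n|g(n)|\,\mathrm{poly}(|n|)<\infty$. So the $(d,\delta)$ enumeration order is irrelevant, as claimed.
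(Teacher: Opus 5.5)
Your proposal is correct and is essentially the paper's proof: expand both divisor sums, detect $\delta\mid n+\kappa$ by the exponential sum that produces $\ee{\kappa k/\delta}$, apply Poisson summation blockwise using the admissibility modulo $d\delta$ (for your twisted $G$ this follows by splitting $m$ modulo $\delta$), rescale to get $J$, and bound each completed block by its absolute lattice block to obtain absolute summability over $(d,\delta)$. The only difference is cosmetic: you remove $d\mid n$ by the dilation $n=dm$, as in the proof of Theorem~\ref{thm:T1}, rather than by a second exponential sum modulo $d$, which spares you the paper's computation of $\mathcal F_{d\delta}(W_{d,\delta})$ and its reindexing $r=d\ell+j$.
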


\subsection{Organization of the article}
Section 2 proves the introductory theorems from general APSF and explains the origin correction. Section 3 evaluates elementary kernels, selects perfect powers, treats character and divisor coefficients, and gives resolvent and Mersenne--Fermat examples. Section 4 develops Ramanujan and character reciprocity, then explains the passage from Gaussian integers to quadratic forms and field sums. Section 5 derives the zeta and Estermann functional equations, studies squarefree values and their comparison with Tolev, extends the same argument to higher power-free values, and gives the Terras--Epstein expansions. Section 6 indicates broader lattice and Fourier summation formulas, and Section 7 records the use of AI tools.

\section{Proof of the main theorems}\label{sec:main_results}
\subsection{The general arithmetic Poisson summation formula}
\begin{theorem}[General APSF]\label{thm:gen_APSF}
Let $H:\N\to\N$ and let $\{W_d\}$ be a family of complex $H(d)$-periodic functions on $\Z^n$, let $\{f_d\}$ be a family of complex-valued functions on $\R^n$ admissible for Poisson summation on every residue class modulo $H(d)$, and let $b:\N\to\C$ satisfy
\begin{equation}\label{eq:gen_APSF_hyp}
 \sum_{d=1}^\infty|b(d)|\sum_{\mm\in\Z^n}|W_d(\mm)f_d(\mm)|<\infty.
\end{equation}
Then
\begin{equation}\label{eq:gen_APSF}
 \sum_{\vv\in\Z^n}\sum_{d=1}^\infty b(d)W_d(\vv)f_d(\vv)
 =\sum_{d=1}^\infty\frac{b(d)}{H(d)^n}
 \left(\sum_{\mm\in\Z^n}\mathcal F_{H(d)}(W_d)(\mm)\widehat f_d(\mm/H(d))\right),
\end{equation}
where the discrete Fourier transform is
\begin{equation}\label{eq:FiniteFourierTransf}
 \mathcal F_q(W)(\mm)=\sum_{\uu\in(\Z/q\Z)^n}W(\uu)\en{\mm\cdot\uu/q}.
\end{equation}
Each inner frequency sum is completed before the outer sum is taken.
\end{theorem}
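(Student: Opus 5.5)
The plan is to prove the identity for a single $d$ first and then sum over $d$. For fixed $d$, write $q=H(d)$ and split $\Z^n$ into the residue classes $\uu+q\Z^n$ with $\uu\in(\Z/q\Z)^n$. Since $W_d$ is $q$-periodic, it is constant on each class, so
$$
\sum_{\vv\in\Z^n}W_d(\vv)f_d(\vv)=\sum_{\uu\in(\Z/q\Z)^n}W_d(\uu)\sum_{\mathbf k\in\Z^n}f_d(\uu+q\mathbf k).
$$
This is a finite outer sum. By hypothesis, $f_d$ is admissible for Poisson summation on each class. Applying Poisson summation to $\yy\mapsto f_d(\uu+q\yy)$, whose Fourier transform is $q^{-n}\widehat f_d(\xxi/q)\,\en{\uu\cdot\xxi/q}$ in the paper's normalization, gives
$$
\sum_{\mathbf k}f_d(\uu+q\mathbf k)=q^{-n}\sum_{\mm\in\Z^n}\en{\mm\cdot\uu/q}\widehat f_d(\mm/q).
$$
The sign of the exponent should be checked against the convention $\widehat f(\xxi)=\int f(\xx)\ee{\xx\cdot\xxi}\,d\xx$; the substitution $\xx=\uu+q\yy$ produces $\ee{-\uu\cdot\xxi/q}$. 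Because the sum over $\uu$ is finite, it can be interchanged with the $\mm$-sum without any extra convergence hypothesis, as long as each inner $\mm$-sum is taken in the sense guaranteed by admissibility. This yields
$$
\sum_{\vv}W_d(\vv)f_d(\vv)=q^{-n}\sum_{\mm}\mathcal F_q(W_d)(\mm)\,\widehat f_d(\mm/q),
$$
with the inner frequency sum completed first, as stated.

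Next we handle the sum over $d$. Hypothesis \eqref{eq:gen_APSF_hyp} says that the double series $\sum_d\sum_\vv b(d)W_d(\vv)f_d(\vv)$ converges absolutely. By Fubini for series, the left side of \eqref{eq:gen_APSF}, with $\vv$ outer and $d$ inner, therefore equals $\sum_d b(d)\sum_\vv W_d(\vv)f_d(\vv)$. Substituting the single-$d$ identity into each term gives the right side. The outer $d$-series converges absolutely, since its terms are bounded by $|b(d)|\sum_\vv|W_d(\vv)f_d(\vv)|$. This also explains why the parentheses on the right must be kept: no absolute convergence is claimed jointly in $(d,\mm)$.

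The main point needing care is the first step: the meaning of "admissible on every residue class". It has to give both that the spatial sum over each class converges absolutely, which is consistent with the hypothesis, and that the dual $\mm$-sum converges in the prescribed order. Then the finite recombination over $\uu$ is legitimate and the one-$d$ identity holds termwise. Once that is fixed, the rest is bookkeeping.
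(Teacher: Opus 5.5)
Your proposal is correct and follows essentially the same route as the paper. The paper likewise interchanges the $\vv$- and $d$-sums by Fubini using the absolute-convergence hypothesis, and proves the single-modulus periodic Poisson formula by the same coset argument: split into classes $\uu+q\Z^n$, apply Poisson summation on each class with the phase $\en{\uu\cdot\mm/q}$, and exchange the finite $\uu$-sum with the frequency sum to obtain $\mathcal F_q(W)$.
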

\begin{proof}
\textbf{1.} Assume for a moment that the following Poisson summation formula holds for $W:(\Z/q\Z)^n\to\C$, extended periodically to $\Z^n$, and $f:\R^n\to\C$ admissible on the residue classes:
\begin{equation}\label{eq:matomaki_merikoski}
 \sum_{\vv\in\Z^n}W(\vv)f(\vv)
 =\frac1{q^n}\sum_{\mm\in\Z^n}\widehat f(\mm/q)\mathcal F_q(W)(\mm).
\end{equation}
By the convergence hypothesis \eqref{eq:gen_APSF_hyp}, we can apply Fubini's theorem to the left-hand side of \eqref{eq:gen_APSF}, and the resulting series is absolutely convergent. Since the functions $f_d$ are admissible on the required residue classes for every $d$, and each $W_d$ assumes only finitely many values, we can apply \eqref{eq:matomaki_merikoski}:
\begin{align*}
 \sum_{\vv\in\Z^n}\sum_{d=1}^\infty b(d)W_d(\vv)f_d(\vv)
 &=\sum_{d=1}^\infty b(d)\sum_{\vv\in\Z^n}W_d(\vv)f_d(\vv)\\
 &=\sum_{d=1}^\infty\frac{b(d)}{H(d)^n}
 \left(\sum_{\mm\in\Z^n}\mathcal F_{H(d)}(W_d)(\mm)\widehat f_d(\mm/H(d))\right).
\end{align*}

\noindent\textbf{2.} For the one-dimensional periodic formula see Berndt--Schoenfeld \cite[Theorem 4.4]{BerndtSchoenfeld75}. We encountered its truncated one-dimensional form in Matom\"aki--Merikoski \cite[Lemma 3.4]{MatomakiMerikoski23}. The following coset argument proves \eqref{eq:matomaki_merikoski} for the higher dimensional case:
\begin{align*}
 \sum_{\vv\in\Z^n}W(\vv)f(\vv)
 &=\sum_{\uu\in(\Z/q\Z)^n}W(\uu)\sum_{\mm\in\Z^n}f(\uu+q\mm)\\
 &=\sum_{\uu\in(\Z/q\Z)^n}W(\uu)\frac1{q^n}\sum_{\mm\in\Z^n}\widehat f(\mm/q)\en{\uu\cdot \mm/q}\\
 &=\frac1{q^n}\sum_{\mm\in\Z^n}\widehat f(\mm/q)
 \left(\sum_{\uu\in(\Z/q\Z)^n}W(\uu)\en{\uu\cdot \mm/q}\right)\\
 &=\frac1{q^n}\sum_{\mm\in\Z^n}\widehat f(\mm/q)\mathcal F_q(W)(\mm).
\end{align*}
This finishes the proof of general APSF.
\end{proof}

\subsection{The scalar formula, polynomial and vector divisibility}
\begin{proof}[Proof of Theorem \ref{thm:T1}]
Applying Theorem \ref{thm:gen_APSF} with period $H(d)=1$, weight $W_d=1$, coefficient $b(d)$, and test function $F_d$, its lattice-side absolute-convergence assumption is \eqref{eq:T1eq2}, and it gives
$$
 \sum_d b(d)\sum_{j\in\Z}F_d(j)
 =\sum_d b(d)\left(\sum_{r\in\Z}\widehat F_d(r)\right).
$$
Now, just observe that the origin vanishes because $c(0)=0$ and regrouping the absolutely convergent left-hand side by $n=h(d)j$ gives us the expression
$$
 \sum_d b(d)\sum_{j\ne0}g(h(d)j)c\left(\frac{h(d)j}{\omega(d)}\right)
 =\sum_{n\ne0}g(n)\sum_{h(d)\mid n}b(d)c\left(\frac n{\omega(d)}\right),
$$
which is the required arithmetic side of the theorem.
\end{proof}
\begin{remark}[Berndt form and the zero term]\label{cor:berndt}\label{rem:zero-term}
For $h(d)=\omega(d)=d$, the left side is $\sum_{n\ne0}g(n)(b*c)(n)$; if $g,c$ have the same parity this equals twice the positive sum. The case $c=1$ needs a separate origin correction. Suppose that each $g(d\,\cdot)$ is Poisson admissible and that
$$
 \sum_{d\ge1}|b(d)|\sum_{m\ne0}|g(dm)|<\infty.
$$
Apply Theorem \ref{thm:gen_APSF} first with finitely many nonzero coefficients $b(d)$, period $H(d)=1$, weight $W_d=1$, and test function $f_d(x)=g(dx)$. Removing the lattice origin in each block and then passing to the limit gives
$$
 \sum_{n\ne0}g(n)(b*\mathbf1)(|n|)
 =\sum_{d\ge1}b(d)\left\{\frac1d\sum_{r\in\Z}\widehat g(r/d)-g(0)\right\}.
$$
No parity assumption on $g$ is needed here. For even $g$, this becomes
\begin{equation}\label{eq:c-one-correction}
 2\sum_{n\ge1}g(n)(b*\mathbf1)(n)
 =\sum_{d\ge1}b(d)\left\{\frac1d\sum_{r\in\Z}\widehat g(r/d)-g(0)\right\}.
\end{equation}
The subtraction stays within each completed block unless the separated series converge.
\end{remark}

To prove the polynomial result, we need the following lemma:

\begin{lemma}\label{lem:coset-poisson}
Let $P\in\Z[x_1,\ldots,x_n]$. For $q\ge1$ define
\begin{align}
 C_q(P)&=\{\uu\in(\Z/q\Z)^n:P(\uu)\equiv0\pmod q\},\label{eq:Vq}\\
 \mathcal G_{P,q}(\mm)&=\sum_{\uu\in C_q(P)}\en{\mm\cdot \uu/q},\qquad \mm\in\Z^n.\label{eq:rho}
\end{align}
The function $\mathcal G_{P,q}$ is periodic modulo $q$ in every coordinate, and for $F\in\mathcal S(\R^n)$,
\begin{equation}\label{eq:coset-poisson}
 \sum_{\substack{\vv\in\Z^n\\q\mid P(\vv)}}F(\vv)
 =\frac1{q^n}\sum_{\mm\in\Z^n}\mathcal G_{P,q}(\mm)\widehat F(\mm/q).
\end{equation}
The same identity holds whenever Poisson summation is valid on every residue class $q\Z^n+\uu$, with both sides interpreted in the corresponding order.
\end{lemma}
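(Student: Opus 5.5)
The plan is to derive the lemma directly from the periodic Poisson formula \eqref{eq:matomaki_merikoski} proved in step 2 of Theorem \ref{thm:gen_APSF}. I would take as weight the indicator $W(\vv)=\mathbf 1_{q\mid P(\vv)}$.

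\textbf{Periodicity of $W$.} Since $P$ has integer coefficients, $\vv\equiv\vv'\pmod q$ coordinatewise implies $P(\vv)\equiv P(\vv')\pmod q$. Expanding each monomial of $P(\vv+q\mm)$ shows this, because every term other than $P(\vv)$ carries a factor of $q$. Hence $W$ is well defined on $(\Z/q\Z)^n$ and is the indicator of $C_q(P)$, so $C_q(P)$ itself is well defined.

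\textbf{Identification of $\mathcal G_{P,q}$.} With this choice,
$$
\mathcal F_q(W)(\mm)=\sum_{\uu\in(\Z/q\Z)^n}W(\uu)\en{\mm\cdot\uu/q}=\sum_{\uu\in C_q(P)}\en{\mm\cdot\uu/q}=\mathcal G_{P,q}(\mm).
$$
Periodicity of $\mathcal G_{P,q}$ modulo $q$ in each coordinate follows immediately. Replacing $m_i$ by $m_i+q$ multiplies each summand by $\en{u_i}=1$, since $u_i\in\Z$ for any choice of representatives. The same computation shows the summand does not depend on the representative chosen for $\uu$.

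\textbf{Application of the periodic formula.} I would substitute into \eqref{eq:matomaki_merikoski} with $f=F$. The left side $\sum_{\vv}W(\vv)F(\vv)$ is exactly $\sum_{q\mid P(\vv)}F(\vv)$, and this gives \eqref{eq:coset-poisson}.

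\textbf{Justification for Schwartz $F$.} For $F\in\mathcal S(\R^n)$, each translate $F(\uu+q\,\cdot)$ is Schwartz. Ordinary Poisson summation therefore applies on each coset, giving
$$
\sum_{\mm}F(\uu+q\mm)=q^{-n}\sum_{\mm}\widehat F(\mm/q)\en{\uu\cdot\mm/q}.
$$
Both sides converge absolutely, since $\widehat F$ is Schwartz. The finite sum over $\uu$ can then be interchanged with the $\mm$-sum.

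\textbf{General admissible case.} In the weaker case, where only Poisson summation on each residue class $q\Z^n+\uu$ is assumed, the same computation works verbatim. The only interchange used is between a \emph{finite} sum over $\uu\in(\Z/q\Z)^n$ and the $\mm$-series. Finite linear combinations of convergent series, each summed in the same prescribed order, converge to the combination of the limits. This is why the statement says "interpreted in the corresponding order."

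\textbf{Main obstacle.} The main (and mild) obstacle is only this order-of-summation bookkeeping in the non-Schwartz case. The required interchange involves finitely many cosets, so it is harmless.
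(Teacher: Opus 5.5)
Your proposal is correct and is essentially the paper's proof. The paper applies Theorem \ref{thm:gen_APSF} with a single coefficient, period $q$, and weight $W(\vv)=\mathbf 1_{q\mid P(\vv)}$, which amounts to your direct use of the periodic formula \eqref{eq:matomaki_merikoski} together with the identification $\mathcal F_q(W)=\mathcal G_{P,q}$; your additional checks (well-definedness of $W$ modulo $q$, periodicity of $\mathcal G_{P,q}$, and the finite-interchange bookkeeping in the admissible case) are details the paper leaves implicit.
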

\begin{proof}
We just need to apply Theorem \ref{thm:gen_APSF} with a single nonzero arithmetic coefficient equal to $1$, period $q$, weight $W(\vv)=\mathbf1_{q\mid P(\vv)}$, and test function $F$. Its finite Fourier transform is $\mathcal G_{P,q}$ by definition, so its conclusion is \eqref{eq:coset-poisson}.
\end{proof}

\begin{proof}[Proof of Theorem \ref{thm:polynomial-T1}]
Now take the data $H(d)=h(d)$, $W_d(\vv)=\mathbf1_{h(d)\mid P(\vv)}$, and $f_d=F_d$ in Theorem \ref{thm:gen_APSF}. The residue-class admissibility assumption is the one in Lemma \ref{lem:coset-poisson}; the absolute-convergence assumption is \eqref{eq:poly-hyp}. Since $\mathcal F_{h(d)}(W_d)=\mathcal G_{P,h(d)}$, the right hand side is exactly that of \eqref{eq:poly-T1}, and for the left hand side, absolute convergence permits us to re-order the terms as
\begin{align*}
 \sum_{\vv\in\Z^n}\sum_{d\ge1}b(d)W_d(\vv)F_d(\vv)
 &=\sum_{\vv\in\Z^n}g(\vv)\sum_{h(d)\mid P(\vv)}b(d)c\left(\frac{P(\vv)}{\omega(d)}\right)\\
 &=\sum_{\vv\in\Z^n}g(\vv)B(P(\vv)).
\end{align*}
For $P(\vv)\ne0$ the inner sum is finite because $h(d)\to\infty$; for $P(\vv)=0$ every term vanishes because $c(0)=0$. This finishes the proof.
\end{proof}
\begin{proof}[Proof of Theorem \ref{thm:vector-dilation}]
In Theorem \ref{thm:gen_APSF} take dimension $n$, $H(d)=1$, $W_d=1$, coefficient $b(d)$, and test function $F_d$. Its absolute-convergence assumption is \eqref{eq:vec-hyp}. Its right hand side is the right side of \eqref{eq:vector-dilation}. The terms with $\jj=0$ on the left hand side vanish since $P(0)=0$ and $c(0)=0$; setting $\vv=h(d)\jj$ and regrouping the absolutely convergent arithmetic sum permit us to obtain
\begin{align*}
 \sum_{d\ge1}b(d)\sum_{\jj\in\Z^n}F_d(\jj)
 &=\sum_{d\ge1}b(d)\sum_{\jj\ne0}g(h(d)\jj)c\left(\frac{P(h(d)\jj)}{\omega(d)}\right)\\
 &=\sum_{\vv\ne0}g(\vv)\sum_{h(d)\mid\gcd(\vv)}b(d)c\left(\frac{P(\vv)}{\omega(d)}\right).
\end{align*}
This is the left side of \eqref{eq:vector-dilation}.
\end{proof}
\begin{remark}[An origin-retaining specialization]\label{rem:constant-polynomial}
The polynomial in Theorem \ref{thm:polynomial-T1} need not vanish at the origin. Take $P(\xx)=1$, $c(t)=t$, $\omega(d)=1$, $h(d)=d$, and $b(d)=\mathbf1_{d=1}$. Its conclusion is ordinary Poisson summation for $g$, including $g(0)$. In particular, applying it to $g(\xx)=f(\uu+B\xx)$, with $B$ an invertible real matrix, gives
$$
 \sum_{\jj\in\Z^n}f(\uu+B\jj)
 =\frac1{|\det B|}\sum_{\mm\in\Z^n}
       \en{\uu\cdot B^{-T}\mm}\widehat f(B^{-T}\mm),
$$
whenever that translated function is admissible. This elementary specialization is useful for the fixed residue classes and origin corrections below; it carries no claim of a new arithmetic transform.
\end{remark}

\subsection{Convolutions, products, and shifts}\label{sec-conv}
\begin{proof}[Proof of Theorem \ref{T2}]
Apply Theorem \ref{thm:gen_APSF} in dimension $\ell$ with $H(d)=1$, $W_d=1$, coefficient $b_0(d)$ and test function $F_d$. Its absolute-convergence assumption is precisely \eqref{eq:T2-grouped-hyp}, and it gives
$$
 \sum_{d\ge1}b_0(d)\sum_{\mm\in\Z^\ell}F_d(\mm)
 =\sum_{d\ge1}b_0(d)\left(\sum_{\rr\in\Z^\ell}\widehat F_d(\rr)\right).
$$
The Fourier integral $\widehat F_d(\rr)$ is the term $I(d;r_1,\ldots,r_\ell)$. On the arithmetic side, terms on coordinate hyperplanes vanish; since each choice of signs of the nonzero $m_j$ gives the same value (for odd functions the sign from $g_0$ cancels the product of signs from the $c_j$),  restricting to positive $m_j$ therefore gives a factor $2^\ell$. Grouping by $n=dm_1\cdots m_\ell$ gives the iterated convolution in \eqref{eq:T2}.
\end{proof}
\begin{proof}[Proof of Theorem \ref{Tproduct}]
Enumerate $\dd\in\N^\ell$ and apply Theorem \ref{thm:gen_APSF} in dimension one with period $1$, weight $1$, coefficient $\prod_jb_j(d_j)$, and test function $H_{\dd}$. Hypothesis \eqref{eq:product-grouped-hyp} justifies the enumeration and gives
$$
 \sum_{\dd}\prod_jb_j(d_j)\sum_{m\in\Z}H_{\dd}(m)
 =\sum_{\dd}\prod_jb_j(d_j)\left(\sum_{r\in\Z}\widehat H_{\dd}(r)\right).
$$
The term $m=0$ vanishes because at least one $c_j(0)=0$. On putting $n=L(\dd)m$, the left side becomes
$$
 \sum_{n\ne0}g(n)\sum_{\substack{d_1\mid n,\ldots,d_\ell\mid n}}
 \prod_{j=1}^\ell b_j(d_j)c_j(n/d_j)
 =\sum_{n\ne0}g(n)\prod_{j=1}^\ell(b_j*c_j)(n).
$$
The Fourier integral of $H_{\dd}$ is the one in \eqref{eq:Tproduct}, proving the identity.
\end{proof}

\begin{proof}[Proof of Theorem \ref{Tshift}]
\textbf{1.}For each pair $(d,\delta)$, take as the period $H=d\delta$, weight function
$W_{d,\delta}(v)=\mathbf1_{d\mid v}\mathbf1_{\delta\mid v+\kappa}$ and test function $f_{d,\delta}(x)=g(x)c(x/d)c_0((x+\kappa)/\delta)$.
Using the coefficient $b(d)b_0(\delta)$ in Theorem \ref{thm:gen_APSF}, the admissibility on the required residue classes is assumed in Theorem \ref{Tshift} and the absolute lattice-side convergence follows from polynomial growth and the exponential decay of $g$: the sums over divisors of $|v|$ and $v+\kappa$ have at most polynomial growth, while the terms with $v=0$ or $v=-\kappa$ vanish and the terms with $v<-\kappa$ vanish by the hypothesis on $c_0$. To compute the finite transform, write
$$
 W_{d,\delta}(v)=\frac1H\sum_{j=0}^{d-1}\sum_{k=0}^{\delta-1}
 \ee{(j/d+k/\delta)v}\ee{\kappa k/\delta}.
$$
and observe that the finite character orthogonality gives
$$
 \mathcal F_H(W_{d,\delta})(m)
 =\sum_{\substack{0\le j<d,\ 0\le k<\delta\\m\equiv j\delta+kd\pmod H}}
 \ee{\kappa k/\delta}.
$$

\noindent\textbf{2.} The completed Fourier sum supplied by Theorem \ref{thm:gen_APSF} is given by
$$
 \frac1H\sum_{m\in\Z}\mathcal F_H(W_{d,\delta})(m)\widehat f_{d,\delta}(m/H)
 =\sum_{k=0}^{\delta-1}\ee{\kappa k/\delta}
 \sum_{r\in\Z}\frac1H\widehat f_{d,\delta}\left(\frac{r\delta+kd}{H}\right)\,,
$$
where  $m=H\ell+j\delta+kd$ and $r=d\ell+j$; observe that each frequency sum is completed before summing over $d,\delta$. In the Fourier integral, the change of variables $x=Hy$ gives
$$
 \frac1H\widehat f_{d,\delta}\left(\frac{r\delta+kd}{H}\right)
 =\int_\R\ee{(r\delta+kd)y}g(d\delta y)c(\delta y)c_0(dy+\kappa/\delta)\,dy
 =J(d,\delta,k,r).
$$
Each completed Fourier block equals its arithmetic block, so its absolute value is bounded by the corresponding absolute lattice sum. The absolute convergence established in step 1 therefore proves absolute summability of the completed outer blocks; no absolute convergence of the individual Fourier terms is needed. Regrouping the absolutely convergent arithmetic side by $v=n$ gives the left hand side of \eqref{eq:Tshift}, including the stated vanishing endpoint terms, finishing the proof of the theorem.
\end{proof}

\section{Basic applications}\label{part:TIApps}
The first examples show how the dilation and interpolant select arithmetic coefficients. The Lambert identities below also follow by direct divisor regrouping: evaluating the completed Fourier sum returns the corresponding lattice sum. We retain the Fourier calculation to identify the kernels and their normalizations. The later reciprocity arguments use the rational partial-fraction representation before regrouping its two indices.
\subsection{Exponential kernels and arithmetic selectors}
For $s>0$ and integers $a\ge1$, $j\ge0$ with $N=a+j$ even, use $c(x)=x^a$, $g(x)=x^j e^{-s|x|}$. The elementary Fourier integral and partial-fraction identity are
\begin{equation}\label{eq:mittag-leffler}
 \widehat{e^{-s|\cdot|}}(r)=\frac{2s}{s^2+4\pi^2r^2},\qquad
 \sum_{r\in\Z}\frac{u}{u^2+r^2}=\pi\coth(\pi u).
\end{equation}
Differentiating $N$ times and setting $u=sh(d)/(2\pi)$ gives
$$
 \sum_r\widehat F_d(r)
 =\frac{h(d)^N}{\omega(d)^a}\left.\frac{d^N}{dt^N}\coth(t/2)\right|_{t=sh(d)}
 =\frac{2h(d)^N}{\omega(d)^a}\Li_{-N}(e^{-sh(d)}),
$$
where $\Li_{-N}(e^{-t})=\sum_{m\ge1}m^N e^{-tm}$. Differentiation under the integral and frequency sum follows from exponential decay and locally uniform convergence for $s>0$. Thus
\begin{equation}\label{eq:master}
 2\sum_{n\ge1}n^N\exp(-sn)\sum_{h(d)\mid n}\frac{b(d)}{\omega(d)^a}
 =2\sum_{d\ge1}\frac{b(d)h(d)^N}{\omega(d)^a}\Li_{-N}(\exp(-sh(d))).
\end{equation}
\begin{remark}[Convergence]
The exact absolute-convergence condition for this regrouping is
$$
 \sum_{d\ge1}\frac{|b(d)|h(d)^N}{|\omega(d)|^a}\Li_{-N}(\exp(-sh(d)))<\infty.
$$
For example, it holds for every $s>0$ if $h(d)\ge c d^\delta$ for constants $c,\delta>0$ and $|b(d)|h(d)^N/|\omega(d)|^a$ has at most polynomial growth. The left side of \eqref{eq:master} then converges absolutely as well, by regrouping its absolute majorant. A lower bound on $h(d)/\omega(d)$ alone does not supply the required exponential decay.
\end{remark}

\begin{example}[Additive twists]
Let $\alpha\in\R$ and choose
$$
 c_\alpha(x)=\cos(2\pi\alpha x)-1,\quad g(x)=\exp(-s|x|),\quad h(d)=\omega(d)=d.
$$
The function $c_\alpha$ is continuous and $c_\alpha(0)=0$. The elementary lattice sums
$$
 \sum_{m\in\Z}\exp(-x|m|)\cos(2\pi\alpha m)=\frac{\sinh x}{\cosh x-\cos(2\pi\alpha)},
 \qquad\sum_{m\in\Z}\exp(-x|m|)=\coth(x/2)
$$
give the exact identity from Theorem \ref{thm:T1}:
\begin{align}\label{eq:additive}
 &2\sum_{n\ge1}\exp(-sn)\sum_{d\mid n}b(d)\{\cos(2\pi\alpha n/d)-1\}\notag\\
 &=\sum_{d\ge1}b(d)\left\{\frac{\sinh(sd)}{\cosh(sd)-\cos(2\pi\alpha)}-\coth(sd/2)\right\}.
\end{align}
The formula is valid for any polynomially growing $b$.
\end{example}

\begin{example}[Logarithm of Euler's product]\label{ex:euler-product}
For $s>0$, put $q=e^{-s}$ and take in Theorem \ref{thm:T1}
$$
 b(d)=1,\qquad h(d)=\omega(d)=d,\qquad g(x)=e^{-s|x|},\qquad
 c(x)=\begin{cases}(1-\sinc x)/|x|,&x\ne0,\\0,&x=0.\end{cases}
$$
This interpolant is continuous, since $c(x)=\pi^2|x|/6+O(|x|^3)$ at zero, and satisfies $c(m)=1/|m|$ at nonzero integers. The resulting identity is
\begin{equation}\label{eq:euler-product-apsf}
 2\sum_{n\ge1}\sigma_{-1}(n)q^n
 =\sum_{d\ge1}\left(\sum_{r\in\Z}I_{sd}(r)\right)
 =-2\log\prod_{d\ge1}(1-q^d),
\end{equation}
where the individual Fourier kernel has the real integral representation
\begin{equation}\label{eq:euler-product-kernel}
 I_a(r)=\frac12\int_{-1}^1
 \log\frac{a^2+(2\pi r+\pi v)^2}{a^2+(2\pi r)^2}\,dv,
 \qquad a>0.
\end{equation}
The product in \eqref{eq:euler-product-apsf} is Euler's classical product; this gives its logarithm through a scalar APSF kernel.

Here $F_d(x)=e^{-sd|x|}c(x)$. For $a>0$, evenness and
$\sinc x=\frac12\int_{-1}^1\cos(\pi vx)\,dv$ give
$$
 I_a(r)=2\int_0^\infty e^{-ax}(1-\sinc x)\cos(2\pi rx)\,\frac{dx}{x}.
$$
The cosine form of Frullani's integral is
$$
 \int_0^\infty e^{-ax}\{\cos(bx)-\cos(cx)\}\,\frac{dx}{x}
 =\frac12\log\frac{a^2+c^2}{a^2+b^2}.
$$
Indeed, differentiating in $c$ gives $c/(a^2+c^2)$ on both sides, and both vanish at $c=b$. Apply this identity after expanding
$\cos(2\pi rx)\cos(\pi vx)$ into its two cosine terms. Integrating over the symmetric interval $[-1,1]$ gives
\eqref{eq:euler-product-kernel}. As the integrals involved converge
absolutely (the cancellation at the origin is of order $x^2$, while
the exponential factor ensures convergence at infinity), observe that the remaining
integral can also be evaluated explicitly: indeed, if we consider the auxiliary function
$$
 H_a(y)=y\log(a^2+y^2)-2y+2a\arctan(y/a),
$$
we obtain the expression
$$
 I_a(r)
 =\frac{H_a(2\pi r+\pi)-H_a(2\pi r-\pi)}{2\pi}
  -\log(a^2+4\pi^2r^2).
$$

To apply Theorem \ref{thm:T1}, we first verify Poisson admissibility.
For fixed $a>0$, the function $e^{-a|x|}c(x)$ and its derivatives
decay exponentially at infinity; its first derivative has a finite
jump at the origin, and its second derivative is integrable on each
half-line. So, integrating by parts separately on the two half-lines gives us
$$
 I_a(r)=O_a\bigl((1+|r|)^{-2}\bigr).
$$
The periodization of $e^{-a|x|}c(x)$ converges uniformly and defines
a continuous periodic function whose Fourier series converges
absolutely by the preceding estimate; this establishes Poisson
admissibility and gives the formula
$$
 \sum_{r\in\Z}I_a(r)
 =2\sum_{m\ge1}\frac{e^{-am}}m
 =-2\log(1-e^{-a}).
$$
The required arithmetic summation is also absolutely convergent,
since, for $0<q<1$,
$$
 \sum_{d,m\ge1}\frac{q^{dm}}m
 =\sum_{m\ge1}\frac{q^m}{m(1-q^m)}
 \le\frac1{1-q}\sum_{m\ge1}\frac{q^m}m
 <\infty.
$$
Thus Theorem \ref{thm:T1} applies, and we may regroup the double sum
by $n=dm$. The coefficient of $q^n$ is then
$\sum_{m\mid n}m^{-1}=\sigma_{-1}(n)$, whereas summing first over $m$
gives $-\sum_{d\ge1}\log(1-q^d)$. These two expressions yield
\eqref{eq:euler-product-apsf}; to finish our discussion, we just need to observe that the sum $\sum_{d\ge1}q^d<\infty$
ensures that the infinite product converges to a positive number,
so its logarithm is the real logarithm.
\end{example}

\begin{example}[Perfect powers]
Fix an integer $\ell\ge2$. Define the multiplicative function $\lambda_\ell$ by
\begin{equation}\label{eq:lambda-ell-dirichlet}
 \sum_{n\ge1}\frac{\lambda_\ell(n)}{n^z}=\frac{\zeta(\ell z)}{\zeta(z)},\qquad\Re z>1.
\end{equation}
Its prime-power values are
$$
 \lambda_\ell(p^j)=\begin{cases}
 1,&j\equiv0\pmod\ell,\\
 -1,&j\equiv1\pmod\ell,\\
 0,&\text{otherwise}.
 \end{cases}
$$
Consequently,
\begin{equation}\label{eq:lambda-selector}
 \sum_{d\mid n}\lambda_\ell(d)=\begin{cases}1,&n\text{ is an }\ell\text{-th power},\\0,&\text{otherwise}.
 \end{cases}
\end{equation}
In \eqref{eq:master}, take
$$
 h(d)=\omega(d)=d,\qquad b(d)=d^a\lambda_\ell(d).
$$
The divisor coefficient on the left becomes the indicator in \eqref{eq:lambda-selector}, and hence
\begin{equation}\label{eq:perfect-powers}
 2\sum_{m\ge1}m^{\ell N}\exp(-sm^\ell)
 =2\sum_{d\ge1}\lambda_\ell(d)d^N\Li_{-N}(\exp(-sd)).
\end{equation}
This is an exact perfect-power selector for every even $N=a+j$ with $a\ge1$.

For $\ell=2$, $\lambda_2$ is the Liouville function $\lambda$. When $N=2$,
$$
 2\Li_{-2}(\exp(-x))=\frac14\frac{\sinh x}{\sinh^4(x/2)}.
$$
Thus \eqref{eq:perfect-powers} specializes to
\begin{equation}\label{eq:square-selector}
 2\sum_{m\ge1}m^4\exp(-sm^2)
 =\frac14\sum_{d\ge1}\lambda(d)d^2\frac{\sinh(sd)}{\sinh^4(sd/2)}.
\end{equation}
\end{example}

\begin{example}[Power-free integers]
Let $k\ge2$. The elementary M\"obius identity
$$
 \sum_{d^k\mid n}\mu(d)=\mathbf1_{\{n\text{ is }k\text{-free}\}}
$$
is inserted into \eqref{eq:master} by taking
$$
 h(d)=\omega(d)=d^k,\qquad b(d)=d^{ka}\mu(d).
$$
The factors $d^{ka}$ cancel between $b(d)$ and $\omega(d)^a$. We obtain
\begin{equation}\label{eq:kfree}
 2\sum_{\substack{n\ge1\\n\text{ is }k\text{-free}}}n^N\exp(-sn)
 =2\sum_{d\ge1}\mu(d)d^{kN}\Li_{-N}(\exp(-sd^k)).
\end{equation}
Here the nonlinear dilation $h(d)=d^k$ is essential: it is what turns the divisibility condition into the $k$-free sieve. For fixed $s>0$ the right side is absolutely convergent, including the M\"obius signs.
\end{example}

\begin{example}[Characters and divisor coefficients]\label{sec:basic-characters}
Let $\chi$ be a primitive nonprincipal Dirichlet character modulo $q$. We need to construct its continuous trigonometric interpolant. We choose its representatives in the interval $-q/2<a\le q/2$, since this symmetric choice ensures that the interpolant has the same parity as $\chi$
for every real argument, extending the parity relation at the
integer samples. Moreover, when $q$ is even, the unpaired endpoint $a=q/2$
causes no difficulty, since $\chi(q/2)=0$. Now let
$$
 \tau(\chi)=\sum_{a\bmod q}\chi(a)\ee{a/q},\qquad
 P_\chi(x)=\frac1{\tau(\overline\chi)}\sum_{a\bmod q}\overline{\chi(a)}\ee{ax/q}.
$$
The Gauss-sum identity gives
$$
 P_\chi(n)=\chi(n)\quad(n\in\Z),\qquad P_\chi(0)=0,\qquad P_\chi(-x)=\chi(-1)P_\chi(x).
$$
Write $\chi(-1)=(-1)^\varepsilon$ with $\varepsilon\in\{0,1\}$. In Theorem \ref{thm:T1}, take
$$
 c(x)=P_\chi(x),\qquad g(x)=x^\varepsilon\exp(-s|x|),\qquad h(d)=\omega(d)=d.
$$
Since $g(dm)P_\chi(m)$ is even in $m$, Theorem \ref{thm:T1} yields
\begin{equation}\label{eq:character}
 2\sum_{n\ge1}n^\varepsilon\exp(-sn)(b*\chi)(n)
 =2\sum_{d\ge1}b(d)d^\varepsilon\sum_{m\ge1}m^\varepsilon\chi(m)\exp(-sdm),
\end{equation}
where $(b*\chi)(n)=\sum_{d\mid n}b(d)\chi(n/d)$. The sums converge absolutely when $b$ has at most polynomial growth, for example, and the two-square representation function satisfies
$$
 r_2(n)=4\sum_{d\mid n}\chi_{-4}(d)=4(\mathbf1*\chi_{-4})(n).
$$
For $\chi_{-4}$ one has the expression
$$
 2\sum_{m\ge1}m\chi_{-4}(m)\exp(-xm)=\frac{\sinh x}{\cosh^2x}.
$$
Putting $b(d)=1$ and $\varepsilon=1$ in \eqref{eq:character} gives
\begin{equation}\label{eq:r2}
 \frac12\sum_{n\ge1}nr_2(n)\exp(-sn)=\sum_{d\ge1}d\frac{\sinh(sd)}{\cosh^2(sd)}.
\end{equation}
If we consider the theta function
$$
 \vartheta(t)=\sum_{m\in\Z}\exp(-\pi tm^2),
$$
it is not hard to check that $\vartheta(s/\pi)^2=1+\sum_{n\ge1}r_2(n)\exp(-sn)$; in particular, this tells us that \eqref{eq:r2} is equivalent to
\begin{equation}\label{eq:r2-theta}
 -\frac12\frac d{ds}\vartheta(s/\pi)^2=\sum_{d\ge1}d\frac{\sinh(sd)}{\cosh^2(sd)}.
\end{equation}
The coefficient $b*\chi_{-4}$ identifies the arithmetic side of Theorem \ref{thm:T1} with the two-square representation sum. The displayed identity also follows directly by regrouping the divisor sum.
\end{example}

\begin{example}[Four squares]
The preceding two-square example has a companion using Jacobi's classical representation formula \cite{Grosswald85}:
$$
 r_4(n)=8\sum_{\substack{d\mid n\\4\nmid d}}d.
$$
Here $r_4(n)$ counts ordered representations by four integer squares, including signs. In Theorem \ref{thm:T1}, take
$$
 b(d)=d\mathbf1_{4\nmid d},\qquad h(d)=\omega(d)=d,\qquad
 c(x)=1-\sinc x,\qquad g(x)=e^{-s|x|},\quad s>0.
$$
The arithmetic side is $(\vartheta(s/\pi)^4-1)/4$. For $a=sd$, put $L_a(\xi)=2a/(a^2+4\pi^2\xi^2)$. The Fourier coefficient is
$$
 \widehat F_d(r)=L_a(r)-\int_{-1/2}^{1/2}L_a(r+u)\,du.
$$
Since the intervals tile the real line, the integral of $L_a$ is $1$, and the partial-fraction sum is $\sum_rL_a(r)=\coth(a/2)$, we obtain the formula
\begin{equation}\label{eq:four-square}
 \vartheta(s/\pi)^4-1
 =4\sum_{\substack{d\ge1\\4\nmid d}}d\{\coth(sd/2)-1\}.
\end{equation}
The majorant $2\sum_{d\ge1}d/(e^{sd}-1)<\infty$ verifies the arithmetic convergence hypothesis. This derives the theta identity from the scalar theorem using Jacobi's formula as input.
\end{example}

\begin{example}[Totient functions]
Take $\ell=2$, $b_1=b_2=\mu$, $c_1(x)=c_2(x)=x$, and $g(x)=\exp(-s|x|)$ as our data in Theorem \ref{Tproduct}. Since $(\mu*c_j)(n)=\varphi(n)$, the left side of the relation is
$$
 2\sum_{n\ge1}\varphi(n)^2\exp(-sn).
$$
Grouping $d_1,d_2$ according to $q=\operatorname{lcm}(d_1,d_2)$ gives
$$
 a(q)=\mu^2(q)\prod_{p\mid q}(1-2p).
$$
The local factor $1-2p$ comes from the three possibilities $(p\mid d_1,p\nmid d_2)$, $(p\nmid d_1,p\mid d_2)$, and $(p\mid d_1,p\mid d_2)$. Indeed, their contributions to $\mu(d_1)\mu(d_2)q^2/(d_1d_2)$ are $-p,-p,1$, respectively; putting all the contributions together, we obtain the exact identity
\begin{equation}\label{eq:eulertotient}
 2\sum_{n\ge1}\varphi(n)^2\exp(-sn)
 =2\sum_{q\ge1}a(q)\Li_{-2}(\exp(-sq))
 =\frac14\sum_{q\ge1}a(q)\frac{\sinh(sq)}{\sinh^4(sq/2)}.
\end{equation}
In a completely similar way, if we define, for each integer $A\ge1$,
$$
 J_A(n)=(\mu*x^A)(n),\qquad a_A(q)=\mu^2(q)\prod_{p\mid q}(1-2p^A),
$$
the same least-common-multiple calculation gives us
\begin{equation}\label{eq:jordan-square}
 2\sum_{n\ge1}J_A(n)^2\exp(-sn)
 =2\sum_{q\ge1}a_A(q)\Li_{-2A}(\exp(-sq)).
\end{equation}
\end{example}

\begin{example}[A resolvent identity]\label{sec:resolvents}\label{ex:resolvents}
We apply the origin-corrected formula \eqref{eq:c-one-correction}
with $g(x)=(1+s^2x^2)^{-1}$, where $s>0$. Taking
$b(d)=d^{-\eta}$ gives
\begin{equation}\label{eq:correct-resolvent}
 2\sum_{n\ge1}\frac{\sigma_{-\eta}(n)}{1+s^2n^2}
 =\sum_{d\ge1}d^{-\eta}
 \left\{\frac\pi{sd}\coth\frac\pi{sd}-1\right\},
 \qquad \Re\eta>-1.
\end{equation}
Indeed, since we have the trivial estimate
$$
 \sum_{m\ge1}\frac1{1+s^2d^2m^2}
 \le \frac{\zeta(2)}{s^2d^2},
$$
 the double series converges absolutely throughout this
half-plane. The subtraction of the origin term is essential here:
as $d\to\infty$,
$$
 \frac\pi{sd}\coth\frac\pi{sd}-1
 =\frac{\pi^2}{3s^2d^2}+O_s(d^{-4}).
$$
Keeping the subtraction inside each summand therefore allows
\eqref{eq:correct-resolvent} to hold for $\Re\eta>-1$;
separating it as $-\zeta(\eta)$ is justified only when $\Re\eta>1$. Observe also that the same formula with $b=\lambda$, the Liouville function,
selects the perfect squares, since
$\sum_{d\mid n}\lambda(d)$ equals $1$ when $n$ is a square
and $0$ otherwise. The preceding bound again gives absolute
convergence, and we obtain
\begin{align*}
 \sum_{d\ge1}\lambda(d)
 \left\{\frac\pi{sd}\coth\frac\pi{sd}-1\right\}
 &=2\sum_{m\ge1}\frac1{1+s^2m^4}\\
 &=\frac{\pi}{\sqrt{2s}}
   \frac{\sinh x+\sin x}{\cosh x-\cos x}-1,
 \qquad x=\frac{\sqrt2\pi}{\sqrt s}.
\end{align*}
Alternatively, we can satisfy the condition $c(0)=0$ directly in this case
by taking
$$
 c(x)=x^2,\qquad g(x)=(1+s^2x^2)^{-2}.
$$
Differentiating the partial-fraction expansion used above
evaluates the corresponding lattice sum, and under the
absolute convergence hypotheses of the summation formula, we get
$$
 2\sum_{n\ge1}\frac{n^2}{(1+s^2n^2)^2}
   \sum_{d\mid n}\frac{b(d)}{d^2}
 =\frac{\pi}{2s^3}\sum_{d\ge1}\frac{b(d)}{d^3}
   \left\{\coth x_d-x_d\csch^2x_d\right\},
 \qquad x_d=\frac\pi{sd}.
$$
\end{example}

\begin{example}[Mersenne--Fermat detector]\label{sec:mersenne} In earlier work, one of the authors considered generating functions for Mersenne and Fermat primes \cite{Panzone12} that use a divisor sum which vanishes when its argument has at least three distinct prime factors; the support computation below is that of \cite{Panzone12} and the new point here is the Gaussian representation \eqref{eq:mersenne-Gaussian}.  We first express that sum through Gaussian APSF, then identify its support. Let $\nu:\N\to\C$ be additive on coprime arguments, with $|\nu(k)|\le C_\nu k^A$ for some $A\ge0$. For an integer $M\ge3$ and real $s,z\ge0$, put
$$
 \mathcal D_{\nu,s,z}(M)=
 \sum_{\substack{3\le d\le M\\d\ \mathrm{odd}}}\frac1{d^z}
 \sum_{k\mid d^2-1}\mu(k)\nu(k)^2 k^{1-s}.
$$
For $t\ge M$, the Gaussian representation is
\begin{align}\label{eq:mersenne-Gaussian}
 \mathcal D_{\nu,s,z}(M)+O_\nu(t^{4A+5}e^{-t})
 ={}&\sqrt{\frac\pi t}
 \sum_{\substack{3\le d\le M\\d\ \mathrm{odd}}}\sum_{1\le k\le M^2}
 \frac{\mu(k)\nu(k)^2}{d^zk^s}\notag\\
 &\quad\times\sum_{r\in\Z}
 \exp\left(-\frac{\pi^2r^2}{k^2t}\right)
 \ee{\frac{r(d^2-1)}k}.
\end{align}
The implied constant can be chosen independently of $M,t,s,z$ in these ranges.

For the fixed integers $d,k$, put $f(x)=\exp\{-t(x-d^2+1)^2\}$. Apply Theorem \ref{thm:polynomial-T1} with the auxiliary constant polynomial $P=1$, interpolant $c(u)=u$, $b(j)=\mathbf1_{j=1}$, $h(j)=j$, $\omega(j)=1$, and $g(y)=f(ky)$. This is the origin-retaining specialization of Remark \ref{rem:constant-polynomial}; scaling gives $\widehat g(r)=k^{-1}\widehat f(r/k)$. Thus
$$
 \widehat f(r/k)=\sqrt{\frac\pi t}
 \exp\left(-\frac{\pi^2r^2}{k^2t}\right)
 \ee{\frac{r(d^2-1)}k}.
$$
The polynomial theorem therefore gives the exact equality $k\sum_{k\mid n}f(n)=\sum_r\widehat f(r/k)$. The centre $n=d^2-1$ contributes $k\mathbf1_{k\mid d^2-1}$. Every other integer has distance at least one from the centre, and
$$
 \sum_{j\in\Z\setminus\{0\}}e^{-tj^2}
 \le\frac{2e^{-t}}{1-e^{-3t}}\qquad(t\ge1).
$$
Multiply by $\mu(k)\nu(k)^2d^{-z}k^{-s}$ and sum over the finite $d,k$ ranges. All divisors of $d^2-1$ are at most $M^2$. The error is bounded by
$$
 C_\nu^2\frac{2e^{-t}}{1-e^{-3t}}
 \sum_{d\le M}\sum_{k\le M^2}k^{2A+1}
 \ll_\nu M^{4A+5}e^{-t}\le C'_{\nu}t^{4A+5}e^{-t},
$$
which proves \eqref{eq:mersenne-Gaussian} and its uniformity.

The arithmetic cancellation occurs at $s=1$. The following identity is due to Golomb \cite{Golomb70} (see also \cite{Panzone12}): if $N>1$ has distinct prime divisors $p_1,\ldots,p_j$, then
\begin{equation}\label{eq:mersenne-detector}
 \sum_{k\mid N}\mu(k)\nu(k)^2=
 \begin{cases}
 -\nu(p_1)^2,&j=1,\\
 2\nu(p_1)\nu(p_2),&j=2,\\
 0,&j\ge3.
 \end{cases}
\end{equation}
Indeed only squarefree divisors contribute. Write such a divisor as $\prod_{i\in S}p_i$, expand $(\sum_{i\in S}\nu(p_i))^2$, and sum with sign $(-1)^{|S|}$. The coefficient of $\nu(p_i)^2$ is $-(1-1)^{j-1}$, and that of $2\nu(p_i)\nu(p_l)$, for $i\ne l$, is $(1-1)^{j-2}$. The case $j=1$ is immediate. Thus the sum vanishes for $j\ge3$; if every $\nu(p)\ne0$, it is nonzero for $j=1,2$. The choice $\nu(k)=\log k$ has this property.

For odd $d\ge3$, put $a=(d-1)/2$ and $b=(d+1)/2$. Then $b-a=1$, $(a,b)=1$, and $d^2-1=4ab$. If this last integer has only the prime divisor $2$, then $a=1,b=2$, so $d=3$. If it has exactly two distinct prime divisors, the even member of $a,b$ is $2^u$ and the odd member is $p^v$, for an odd prime $p$ and integers $u,v\ge1$. When $v=1$, the two orientations give
$$
 p=2^u-1,\quad d=2p+1,
 \qquad\text{or}\qquad
 p=2^u+1,\quad d=2p-1.
$$
These are respectively Mersenne and Fermat primes: primality forces $u$ to be prime in the first case and a power of $2$ in the second. If $u=1$, the only possible odd prime power adjacent to $2$ is $3$, already included. When $u,v>1$, Mih\u{a}ilescu's theorem \cite{Mihailescu04} gives the pair $8,9$, hence $d=17$. Conversely, each listed case has at most two distinct prime divisors in $d^2-1$. Therefore the support of \eqref{eq:mersenne-detector}, for $N=d^2-1$ and $\nu(p)\ne0$, is exactly
$$
 \{3,17\}\ \cup\ \{2p+1:p\text{ is a Mersenne prime}\}
 \ \cup\ \{2p-1:p\text{ is an odd Fermat prime}\}.
$$
Together with \eqref{eq:mersenne-Gaussian}, this gives the weighted Gaussian detector. For $s\ne1$ the same Gaussian representation holds, while the prime-factor cancellation in \eqref{eq:mersenne-detector} is specific to $s=1$.
\end{example}

\section{Reciprocity laws and special values of modular forms}\label{sec:reciprocity}
\subsection{Classical Ramanujan reciprocity}
We first obtain reciprocity formulas for the Riemann zeta function and Dirichlet $L$-functions, and then pass to norm sums in field extensions. We express Ramanujan's formula \cite[Chapter 14, Entry 21(i)]{BerndtNotebooksII} through the scalar APSF kernel. The partial-fraction expansion of $\coth$ and finite polynomial division are Siegel's method, as recorded and extended to characters by Katayama \cite[p. 235]{Katayama74JMSJ}. To begin, define for each $m\ge1$ and $\alpha>0$,
$$
 A_m(\alpha)=\sum_{n\ge1}\frac1{n^{2m+1}(e^{2\pi\alpha n}-1)}.
$$

\begin{theorem}[Ramanujan reciprocity law]\label{b-thm:ramanujan}
For every integer $m\ge1$ and $\alpha>0$,
\begin{align}\label{eq:ramanujan}
2A_m(\alpha)+\zeta(2m+1)
={}&(-1)^m\alpha^{2m}\bigl(2A_m(\alpha^{-1})+\zeta(2m+1)\bigr)\notag\\
&+\frac{\zeta(2m+2)}\pi\bigl(\alpha^{-1}-(-1)^m\alpha^{2m+1}\bigr)\notag\\
&+\frac2\pi\sum_{j=1}^m(-1)^{m-j}\alpha^{2m-2j+1}\zeta(2j)\zeta(2m-2j+2).
\end{align}
\end{theorem}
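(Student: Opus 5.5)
We plan to obtain \eqref{eq:ramanujan} from the scalar APSF, Theorem \ref{thm:T1}, in the form of \eqref{eq:berndt-cor} with $h(d)=\omega(d)=d$. The kernel is chosen so that the arithmetic side produces $A_m(\alpha)$ and the Fourier side produces $A_m(\alpha^{-1})$. Concretely, we take $b(d)=d^{-2m-1}$. For the kernel we use the odd pair $g(x)=\operatorname{sgn}(x)/(e^{2\pi\alpha|x|}-1)$ together with an odd interpolant $c$ satisfying $c(0)=0$. Since $g$ has a pole at the origin, we do not rely on the lattice values alone. Instead we regroup the double sum $\sum_{d,n\ge1}d^{-2m-1}n^{-1}(e^{2\pi\alpha dn}-1)^{-1}$ directly from Siegel's partial-fraction representation, as announced in the section introduction.

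The first step is Siegel's expansion
$$\frac{1}{e^{2\pi x}-1}=\frac{1}{2\pi x}-\frac12+\frac{x}{\pi}\sum_{r\ge1}\frac{1}{x^2+r^2}.$$
This is exactly the completed Fourier sum $\sum_r\widehat F_d(r)$ of the exponential kernel, obtained from \eqref{eq:mittag-leffler}. We insert it with $x=\alpha n$, multiply by $n^{-2m-1}$, and sum over $n\ge1$. The result is
$$A_m(\alpha)=\frac{\zeta(2m+2)}{2\pi\alpha}-\frac{\zeta(2m+1)}2+\frac{\alpha}{\pi}\sum_{n,r\ge1}\frac{1}{n^{2m}(\alpha^2n^2+r^2)}.$$

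The second step is finite polynomial division. We expand $1/(n^{2m}(\alpha^2n^2+r^2))$ by repeatedly applying
$$\frac{1}{n^2(\alpha^2n^2+r^2)}=\frac{1}{r^2n^2}-\frac{\alpha^2}{r^2(\alpha^2n^2+r^2)}.$$
This writes the term as a finite alternating combination $\sum_{j=1}^{m}(-1)^{j-1}\alpha^{2j-2}r^{-2j}n^{-2m+2j-2}$. The remainder is $(-1)^m\alpha^{2m}r^{-2m}(\alpha^2n^2+r^2)^{-1}$. Summing the finite part over $n,r$ gives the products $\zeta(2j)\zeta(2m-2j+2)$. The remainder double sum is symmetric under $n\leftrightarrow r$, $\alpha\leftrightarrow\alpha^{-1}$ after factoring out $\alpha^{-2}$. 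We then apply Siegel's expansion in the other direction, at $x=r/\alpha$ with weight $r^{-2m-1}$. This converts the remainder into $A_m(\alpha^{-1})$ plus the terms $\zeta(2m+2)$ and $\zeta(2m+1)$.

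Collecting terms should give \eqref{eq:ramanujan}. The $\zeta(2m+2)/\pi$ contributions come from the two $1/(2\pi x)$ pieces, with powers $\alpha^{-1}$ and $\alpha^{2m+1}$. The $\zeta(2m+1)$ pieces come from the $-\tfrac12$ terms. The sum over $j$ reindexes to $\alpha^{2m-2j+1}(-1)^{m-j}$ after multiplying by $2$.

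The main obstacle is bookkeeping and justification rather than ideas. All double sums above are absolutely convergent positive sums for $m\ge1$, so the interchanges are legitimate. The step to check carefully is the exact power of $\alpha$ and the sign in the symmetric remainder. In that step we would verify the identity
$$\sum_{n,r}\frac{1}{r^{2m}(\alpha^2n^2+r^2)}=\alpha^{-2}\sum_{r}r^{-2m}\sum_n\frac{1}{n^2+(r/\alpha)^2},$$
and then apply Siegel's formula with the correct normalization. We expect the final collection to land exactly on the stated coefficients $(-1)^m\alpha^{2m}$ and $(-1)^{m-j}$.
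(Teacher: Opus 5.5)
Your argument is correct and is essentially the paper's proof. The summed Siegel expansion is the paper's first channel \eqref{eq:lorentz}, your re-indexed polynomial division matches \eqref{b-eq:ram-division}, and evaluating the swapped remainder by Siegel's expansion at $x=r/\alpha$ is the paper's second channel at $\alpha^{-1}$. The only flaw is your opening paragraph. There, the odd kernel $\operatorname{sgn}(x)/(e^{2\pi\alpha|x|}-1)$ and the double sum $\sum_{d,n}d^{-2m-1}n^{-1}(e^{2\pi\alpha dn}-1)^{-1}$ do not produce $A_m(\alpha)$ and are never used, so drop them. The APSF data that actually realize your first step are $g(x)=e^{-2\pi\alpha|x|}$, $c(x)=1-\sinc x$ and $b(d)=d^{-2m-1}$.
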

\begin{proof}
For $a>0$ put $L_a(r)=a/[\pi(a^2+r^2)]$. Theorem \ref{thm:T1} with
\begin{equation}\label{eq:ramanujan-data}
 h(d)=\omega(d)=d,\quad b(d)=d^{-2m-1},\quad
 g_\alpha(x)=e^{-2\pi\alpha|x|},\quad c(x)=1-\sinc x
\end{equation}
has arithmetic side $2A_m(\alpha)$ and Fourier coefficient
$$
 \widehat F_d(r)=L_{\alpha d}(r)
       -\int_{-1/2}^{1/2}L_{\alpha d}(r+u)\,du.
$$
The integral terms sum to $1$. Consequently, separating only the zero frequency gives the first channel
\begin{equation}\label{eq:lorentz}
 2A_m(\alpha)+\zeta(2m+1)
 =\frac{\zeta(2m+2)}{\pi\alpha}+\frac{2\alpha}{\pi}S_m(\alpha),
 \qquad S_m(\alpha)=\sum_{d,r\ge1}\frac{d^{-2m}}{\alpha^2d^2+r^2}.
\end{equation}
Applying the same calculation at $\alpha^{-1}$ gives the second channel
\begin{equation}\label{b-eq:ram-second-channel}
 2A_m(\alpha^{-1})+\zeta(2m+1)
 =\frac{\alpha\zeta(2m+2)}\pi+\frac{2}{\pi\alpha}S_m(\alpha^{-1}).
\end{equation}
Both double sums converge absolutely: each summand of $S_m(\alpha)$ is at most $d^{-2m}r^{-2}$ and $m\ge1$.

The required finite partial-fraction decomposition is
$$
 \frac{d^{-2m}}{\alpha^2d^2+r^2}
 =\sum_{j=1}^m(-1)^{m-j}\alpha^{2m-2j}
       d^{-2j}r^{-2m+2j-2}
  +(-1)^m\alpha^{2m}\frac{r^{-2m}}{\alpha^2d^2+r^2}.
$$
Every product series here is absolutely convergent, as is the remainder. Fubini's theorem therefore permits the exchange of $d,r$ in that remainder and yields
\begin{equation}\label{b-eq:ram-division}
 S_m(\alpha)=\sum_{j=1}^m(-1)^{m-j}\alpha^{2m-2j}
 \zeta(2j)\zeta(2m-2j+2)
 +(-1)^m\alpha^{2m-2}S_m(\alpha^{-1}).
\end{equation}
Insert this into the first channel and eliminate $S_m(\alpha^{-1})$ using the second. The zero-frequency terms give
$\zeta(2m+2)(\alpha^{-1}-(-1)^m\alpha^{2m+1})/\pi$;
the finite products give the last line of \eqref{eq:ramanujan}. This proves the formula directly from the two completed channels.
\end{proof}

\begin{remark}The same formula holds for $\Re\alpha>0$, by locally uniform convergence and analytic continuation. This gives the useful evaluation
\begin{equation}\label{eq:zeta5}
 \zeta(5)=\frac{\pi^5}{294}-\frac{74}{35}A_2(1)+\frac4{35}A_2(2).
\end{equation}
Separating even and odd indices gives
$$
 A_2((1-i)/2)=\frac{33}{16}A_2(1)-A_2(1/2)-\frac1{16}A_2(2).
$$
Formula \eqref{eq:ramanujan} at $\alpha=1/2$ and $(1-i)/2$ gives
\begin{align*}
 2A_2(1/2)&=\frac18A_2(2)-\frac{15}{16}\zeta(5)+\frac{\pi^5}{288},\\
 2A_2((1-i)/2)+\frac12A_2(1)+\frac54\zeta(5)&=\frac{\pi^5}{252}.
\end{align*}
Eliminating $A_2(1/2)$ and $A_2((1-i)/2)$ proves \eqref{eq:zeta5}. This identity was found numerically by Plouffe \cite{Plouffe98,Plouffe06web} and proved by Vep\v{s}tas \cite{Vepstas06}, who also evaluates Ramanujan’s relation at the two points used above; here the only new
ingredient is the APSF derivation of Theorem \ref{b-thm:ramanujan}.
\end{remark}

\subsection{Reciprocities with characters}
For character reciprocity we need one additional Fourier calculation. The character is initially defined only at integers, whereas Theorem \ref{thm:T1} requires a continuous function. The Gauss-sum interpolation below supplies that function and makes its Fourier transform a finite sum of translates of an elementary rational kernel. The underlying finite Fourier identities are classical; see \cite[Chapter 8]{Apostol76}, \cite[\S27.10]{DLMF}, and the character Poisson formulas of \cite{BerndtJNT75}. A Dirichlet character $\chi$ modulo $q$ is a $q$-periodic completely multiplicative function with $\chi(1)=1$, nonzero exactly on the integers coprime to $q$; it is primitive if it is not induced from a proper divisor of $q$. For a primitive nonprincipal character $\psi$, using the same symmetric residue representatives as in Example \ref{sec:basic-characters}, put
$$
 \tau(\psi)=\sum_{a\bmod q}\psi(a)\ee{a/q},\qquad
 \delta_\psi=\begin{cases}1,&\psi(-1)=1,\\0,&\psi(-1)=-1,\end{cases}
$$
$$
 I_\psi(y)=\frac{\operatorname{sgn}(y)^{1-\delta_\psi}}{\tau(\bar\psi)}
 \sum_{a\bmod q}\bar\psi(a)\ee{ay/q},\qquad I_\psi(0)=0.
$$
The finite Gauss identity gives $I_\psi(m)=\psi(|m|)$ for every nonzero integer $m$, and $\tau(\psi)\tau(\bar\psi)=\psi(-1)q$.

\begin{remark}[Why this character interpolant?]
Theorem \ref{thm:T1} requires a continuous function that vanishes at zero, whereas a Dirichlet character is initially defined on integers. The function $I_\psi$ supplies the samples $\psi(|m|)$ required by the Lambert series. For even $\psi$, its finite exponential sum is already even. For odd $\psi$, the sign factor makes it even; continuity at zero follows because the finite exponential sum vanishes there. Primitivity gives the Gauss identity, and the nonprincipal hypothesis gives this vanishing. Finally, the explicit exponential sum lets us compute the Fourier integral term by term, which is the reason for choosing this interpolation in the next lemma.
\end{remark}

\begin{lemma}\label{prop:resolvent}
For ${\beta_0}>0$,
\begin{equation}\label{eq:reciprocity-layer}
 \sum_{n\ge1}\psi(n)\exp(-2\pi {\beta_0}n)
 =\frac q{\pi\tau(\bar\psi)}\sum_{m\ge1}\bar\psi(m)
 \frac{(q\beta_0)^{\delta_\psi}(im)^{1-\delta_\psi}}{q^2{\beta_0}^2+m^2}.
\end{equation}
For odd $\psi$ the sum on the right is taken in its natural order.
\end{lemma}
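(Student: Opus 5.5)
The plan is to apply Theorem \ref{thm:T1} with a single active block: $b(d)=\mathbf 1_{d=1}$, $h(d)=\omega(d)=d$, $g(x)=\exp(-2\pi\beta_0|x|)$ and $c=I_\psi$. Then $F_1(y)=e^{-2\pi\beta_0|y|}I_\psi(y)$. Hypothesis \eqref{eq:T1eq2} reduces to $\sum_r|F_1(r)|<\infty$, which is immediate from exponential decay. Since $I_\psi(m)=\psi(|m|)$ for $m\ne0$, the arithmetic side of \eqref{eq:T1eq4} is $\sum_{n\ne0}e^{-2\pi\beta_0|n|}\psi(|n|)=2\sum_{n\ge1}\psi(n)e^{-2\pi\beta_0n}$. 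It remains to check admissibility and evaluate $\sum_r\widehat F_1(r)$.

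\textbf{Admissibility.} $F_1$ is continuous: for odd $\psi$ this uses $\sum_a\bar\psi(a)=0$, so the finite exponential sum vanishes at $y=0$ and the sign factor causes no jump. $F_1$ is also piecewise smooth with exponential decay. Integrating by parts on each half-line leaves only a boundary term at $0$ proportional to $\sum_a\bar\psi(a)(\cdots)$ times $1/r$, and this term vanishes or is of lower order. So I would show $\widehat F_1(r)=O((1+|r|)^{-2})$, which makes the periodization continuous with an absolutely convergent Fourier series. This is the step I expect to need the most care, in particular the claim that the leading $1/r$ terms cancel. If that cancellation fails, I would instead use symmetric partial sums in $r$ and the continuity of the periodization, that is, the standard Fejér/Dirichlet argument for piecewise smooth functions.

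\textbf{Fourier transform.} Expand $I_\psi$ as a finite sum of exponentials $\ee{ay/q}$, each multiplying $e^{-2\pi\beta_0|y|}$ or $\operatorname{sgn}(y)e^{-2\pi\beta_0|y|}$. The two elementary transforms are
$$
\int_\R e^{-2\pi\beta_0|y|}\ee{\xi y}\,dy=\frac{\beta_0}{\pi(\beta_0^2+\xi^2)},\qquad
\int_\R \operatorname{sgn}(y)e^{-2\pi\beta_0|y|}\ee{\xi y}\,dy=\frac{i\,\xi}{\pi(\beta_0^2+\xi^2)}.
$$
Hence, with $\xi=r+a/q$,
$$
\widehat F_1(r)=\frac1{\pi\tau(\bar\psi)}\sum_{a\bmod q}\bar\psi(a)\frac{\beta_0^{\delta_\psi}(i\xi)^{1-\delta_\psi}}{\beta_0^2+\xi^2}.
$$
Put $m=qr+a$. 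As $r$ runs over $\Z$ and $a$ over the residues, $m$ runs over $\Z$ exactly once, and $\bar\psi(a)=\bar\psi(m)$. Multiplying numerator and denominator by $q^2$ gives
$$
\sum_{r}\widehat F_1(r)=\frac q{\pi\tau(\bar\psi)}\sum_{m\in\Z}\bar\psi(m)\frac{(q\beta_0)^{\delta_\psi}(im)^{1-\delta_\psi}}{q^2\beta_0^2+m^2}.
$$
For even $\psi$ this series converges absolutely, so the regrouping is free.

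\textbf{Odd case and conclusion.} For odd $\psi$ the summand $\bar\psi(m)\,im/(q^2\beta_0^2+m^2)$ is even in $m$, but it is only $O(1/|m|)$. The completed $r$-sum should therefore be read as the limit of symmetric partial sums in $r$; blocks of $q$ consecutive $m$ then correspond to single $r$, up to boundary terms of size $O(1/R)$. This is exactly where the "natural order" caveat in the statement comes from. In both cases the summand is even in $m$ and vanishes at $m=0$, so $\sum_{m\in\Z}=2\sum_{m\ge1}$. Dividing both sides of \eqref{eq:T1eq4} by $2$ then yields \eqref{eq:reciprocity-layer}.
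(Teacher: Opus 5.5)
Your proposal is correct and follows the paper's proof essentially step for step. It uses the same single-block application of Theorem \ref{thm:T1} with $c=I_\psi$ and the same two elementary transforms, then the substitution $m=qr+a$ with pairing of $\pm m$, and gets the bound $\widehat F_1(r)=O(r^{-2})$ from $\sum_a\bar\psi(a)=0$.

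The only differences are minor. For odd $\psi$ the paper justifies the natural-order merged $m$-series by summation by parts against bounded character sums, while you compare $r$-block partial sums with symmetric $m$-partial sums. Also, once the $O(r^{-2})$ bound is in hand, the $r$-sum converges absolutely, so it needs no symmetric reading; the natural-order caveat concerns only the merged series.
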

\begin{proof}
The Fourier transforms, with positive exponential, are
$$
 \int_\R\ee{ty}\exp(-2\pi {\beta_0}|y|)\,dy=\frac {\beta_0}{\pi({\beta_0}^2+t^2)},\qquad
 \int_\R\ee{ty}\operatorname{sgn}(y)\exp(-2\pi {\beta_0}|y|)\,dy
 =\frac{it}{\pi({\beta_0}^2+t^2)}.
$$
We apply Theorem \ref{thm:T1} with $b(d)=\mathbf1_{d=1}$, $h(d)=\omega(d)=d$, $g(y)=\exp(-2\pi\beta_0|y|)$ and $c(y)=I_\psi(y)$. Inserting the finite sum defining $I_\psi$, using the decomposition $m=qr+a$, and pairing positive and negative values of $m$ gives \eqref{eq:reciprocity-layer}. Now, before we merge residues, observe that each completed Fourier coefficient is $O(r^{-2})$, since $\sum_a\bar\psi(a)=0$ cancels the possible $r^{-1}$ term; observe also that for odd $\psi$, bounded character partial sums justify the merged natural-order series by summation by parts, and this proves the result.
\end{proof}

We now introduce the two Lambert series considered by Katayama \cite{Katayama74JMSJ}:
$$
 \Phi_{\chi,N}(\alpha)=\sum_{d,m\ge1}\chi(d)d^{-N}\exp(-2\pi\alpha dm),\qquad
 \Psi_{\chi,N}(\beta)=\sum_{d,m\ge1}\chi(m)d^{-N}\exp(-2\pi\beta dm).
$$
\begin{theorem}[Katayama's formulas for both parities]\label{thm:character}\label{thm:katayama}
Let $\alpha>0$ and let $\chi$ be primitive nonprincipal modulo $q$. Setting $\delta=\delta_\chi$, and letting $N=2k+\delta$, with $k\ge1$ for odd $\chi$ and $k\ge0$ for even $\chi$, we obtain 
\begin{align}\label{eq:character-reciprocity}
 L(N,\chi)+2\Phi_{\chi,N}(\alpha)
 ={}&\frac{L(N+1,\chi)}{\pi\alpha}
 +\frac{2\alpha}\pi\sum_{j=0}^{k-1}(-\alpha^2)^j\zeta(2j+2)L(N-1-2j,\chi)\notag\\
 &+\frac{2i^{1-\delta}(-1)^k\alpha^{N-1}}{\tau(\bar\chi)}
 \Psi_{\bar\chi,N}\left(\frac1{q\alpha}\right).
\end{align}
Thus the left value is $L(2k,\chi)$ for odd $\chi$ and $L(2k+1,\chi)$ for even $\chi$.
\end{theorem}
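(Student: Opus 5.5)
The plan is to copy the two-step structure of the proof of Theorem \ref{b-thm:ramanujan}, with a single channel and with Lemma \ref{prop:resolvent} playing the role of the second channel.

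\textbf{First channel.} Apply Theorem \ref{thm:T1} with
$$
 h(d)=\omega(d)=d,\qquad b(d)=\chi(d)d^{-N},\qquad g(x)=e^{-2\pi\alpha|x|},\qquad c(x)=1-\sinc x .
$$
The arithmetic side is $\sum_{n\ne0}g(n)(b*\mathbf1)(|n|)=2\Phi_{\chi,N}(\alpha)$, since $c=1$ at nonzero integers. Hypothesis \eqref{eq:T1eq2} holds because $N\ge1$ and $\sum_{d}d^{-N}/(e^{2\pi\alpha d}-1)<\infty$.

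Exactly as in \eqref{eq:lorentz}, the Fourier coefficient is
$$
 \widehat F_d(r)=L_{\alpha d}(r)-\int_{-1/2}^{1/2}L_{\alpha d}(r+u)\,du,\qquad L_a(r)=\frac{a}{\pi(a^2+r^2)} .
$$
The integral terms sum over $r$ to $1$ and produce $-L(N,\chi)$. The frequency $r=0$ produces $L(N+1,\chi)/(\pi\alpha)$. Pairing $\pm r$ in the remaining frequencies gives
$$
 L(N,\chi)+2\Phi_{\chi,N}(\alpha)=\frac{L(N+1,\chi)}{\pi\alpha}+\frac{2\alpha}{\pi}T(\alpha),\qquad
 T(\alpha)=\sum_{r\ge1}\sum_{d\ge1}\frac{\chi(d)\,d^{1-N}}{\alpha^2d^2+r^2}.
$$
Since $d^{1-N}=d^{1-\delta}d^{-2k}$, the double sum is absolutely convergent except for even $\chi$ with $k=0$. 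In that case I keep the $r$-sum outside, as the completed-block order of Theorem \ref{thm:T1} prescribes.

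\textbf{Finite division.} Use the same partial-fraction identity as in the Ramanujan proof:
$$
 \frac{d^{-2k}}{\alpha^2d^2+r^2}=\sum_{j=0}^{k-1}(-\alpha^2)^j r^{-2j-2}d^{-2k+2j}+(-\alpha^2)^k\frac{r^{-2k}}{\alpha^2d^2+r^2}.
$$
Multiply by $\chi(d)d^{1-\delta}$ and sum. The polynomial part gives $\sum_{j=0}^{k-1}(-\alpha^2)^j\zeta(2j+2)L(2k-2j-1+\delta,\chi)$. Since $2k-2j-1+\delta=N-1-2j$, this is the finite sum in \eqref{eq:character-reciprocity}. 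Each of these series converges, since $2k-2j-1+\delta\ge1$ and $\chi$ is nonprincipal.

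The remainder is
$$
 (-\alpha^2)^k\sum_{r\ge1}r^{-2k}\sum_{d\ge1}\frac{\chi(d)d^{1-\delta}}{\alpha^2d^2+r^2}.
$$

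\textbf{Second channel via Lemma \ref{prop:resolvent}.} Take $\psi=\bar\chi$ and $\beta_0=r/(q\alpha)$, so that $q^2\beta_0^2+m^2=(r^2+\alpha^2m^2)/\alpha^2$. The lemma then expresses the inner $d$-sum as a constant times $\sum_{n\ge1}\bar\chi(n)e^{-2\pi rn/(q\alpha)}$. The constant is built from $\tau(\chi)$, $\alpha$ and $r^{\delta}$ (even case) or $i^{-1}$ (odd case), and $\tau(\chi)$ is converted to $1/\tau(\bar\chi)$ through $\tau(\chi)\tau(\bar\chi)=\chi(-1)q$. The powers of $r$ combine to $r^{-2k-\delta}=r^{-N}$. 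Summing over $r$ then gives exactly $\Psi_{\bar\chi,N}(1/(q\alpha))$ with the roles of $d$ and $m$ interchanged. Collecting $(-1)^k$, $\alpha^{2k}\cdot\alpha\cdot\alpha^{\delta-1}=\alpha^{N}$ and the prefactor $2\alpha/\pi$, the result should be $2i^{1-\delta}(-1)^k\alpha^{N-1}\Psi_{\bar\chi,N}(1/(q\alpha))/\tau(\bar\chi)$. Checking this power of $\alpha$ and the factor $i^{1-\delta}$ carefully is the bookkeeping step.

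\textbf{Main obstacle.} The hard part is justifying the interchanges when the inner $d$-sums converge only conditionally. For odd $\chi$ the summand is $\chi(d)/(\alpha^2d^2+r^2)$ and is harmless. For even $\chi$, however, $\chi(d)d/(\alpha^2d^2+r^2)$ decays only like $1/d$. Both the splitting of the division identity into separate series and the application of the lemma in its natural-order form need care there.

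I would handle this by summation by parts. Bounded partial sums of $\chi$ give, for each fixed $r$, a tail bound
$$
 \sum_{d>D}\frac{\chi(d)d}{\alpha^2d^2+r^2}\ll\frac{1}{\alpha^2D},
$$
uniformly in $r$, together with a convergent bound in $r$ after multiplying by $r^{-2k}$. For $k=0$ I would additionally use the $O(r^{-2})$ decay of completed Fourier blocks noted in the proof of Lemma \ref{prop:resolvent}. With these bounds, the truncated double sums converge to the iterated ones, which legitimizes each regrouping.
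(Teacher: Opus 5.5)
Your architecture is the paper's. It combines the sinc-corrected application of Theorem~\ref{thm:T1} with $b(d)=\chi(d)d^{-N}$, the finite division, Lemma~\ref{prop:resolvent} with $\psi=\bar\chi$ and $\beta_0=r/(q\alpha)$ applied to the remainder, and $\tau(\chi)\tau(\bar\chi)=\chi(-1)q$. Here is the bookkeeping you deferred. The lemma gives $\sum_{d}\chi(d)d^{1-\delta}/(\alpha^2d^2+r^2)=\pi\tau(\chi)q^{-1}i^{\delta-1}\alpha^{\delta-2}r^{-\delta}\sum_{n}\bar\chi(n)e^{-2\pi rn/(q\alpha)}$. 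So the lemma contributes $\alpha^{\delta-2}$, not $\alpha^{\delta-1}$, and the total power is $\alpha^{2k}\alpha^{\delta-2}\alpha=\alpha^{N-1}$. The phase is $\chi(-1)i^{\delta-1}=i^{1-\delta}$. The actual gap is in your convergence paragraph, which is where the justification lives, and it is wrong in two ways.

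First, the parities are swapped. Since $\delta_\chi=1$ for even $\chi$, the even remainder is $\chi(d)/(\alpha^2d^2+r^2)$, which is absolutely summable once $k\ge1$. It is odd $\chi$ ($\delta=0$) whose remainder is $\chi(d)\,d/(\alpha^2d^2+r^2)$, with terms of size $1/d$. Odd $\chi$ also produces the conditionally convergent term $\zeta(2k)L(1,\chi)$ at $j=k-1$; this is why the natural-order clause in Lemma~\ref{prop:resolvent} concerns odd $\psi$. In that case $T(\alpha)$ converges absolutely, so you may put $r$ outside and split each inner $d$-series through finite sums and summation by parts. Your $r^{-2k}$-weighted tail bound also works there.

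Second, Theorem~\ref{thm:T1} places $d$ outside, with the $r$-sum completed inside, not the reverse. So in the only case where $T(\alpha)$ is not absolutely convergent (even $\chi$, $k=0$, $N=1$), the order $\sum_r\sum_d$ that the lemma needs is exactly what must be proved. A bound uniform in $r$ such as $1/(\alpha^2D)$ cannot be summed over $r$ there, because no factor $r^{-2k}$ is present.

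The missing estimate is the paper's. Since $u\mapsto1/(\alpha^2u^2+r^2)$ is decreasing, bounded character partial sums give $\bigl|\sum_{d>D}\chi(d)/(\alpha^2d^2+r^2)\bigr|\le Cq/(\alpha^2D^2+r^2)$, whose sum over $r\ge1$ is $O(q/(\alpha D))$. Interchange the truncated sum over $d\le D$ and let $D\to\infty$. Alternatively, group $d$ into complete residue blocks modulo $q$ and use $\sum_a\chi(a)=0$; this makes the double series absolutely convergent. That block decay runs in the $d$-direction, not in your outer variable $r$.
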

\begin{proof}
We use the same calculation as in classical Ramanujan reciprocity, with $b(d)=\chi(d)d^{-N}$. This choice gives us
\begin{equation}\label{eq:character-lorentz}
 L(N,\chi)+2\Phi_{\chi,N}(\alpha)
 =\frac\alpha\pi\sum_{r\in\Z}\sum_{d\ge1}\frac{\chi(d)d^{1-N}}{\alpha^2d^2+r^2}.
\end{equation}
The $r=0$ term is $L(N+1,\chi)/(\pi\alpha)$. For $r\ne0$, split the quotient as
$$
 \frac{d^{1-N}}{\alpha^2d^2+r^2}
 =\sum_{j=0}^{k-1}\frac{(-\alpha^2)^jd^{1-N+2j}}{r^{2j+2}}
 +\frac{(-\alpha^2)^k}{r^{2k}}\frac{d^{1-\delta}}{\alpha^2d^2+r^2}\,,
$$
and observe that the finite terms give the displayed products. Applying \eqref{eq:reciprocity-layer} with $\psi=\bar\chi$ and $\beta_0=r/(q\alpha)$ to the remainder and using $\tau(\chi)\tau(\bar\chi)=\chi(-1)q$ gives the last term of \eqref{eq:character-reciprocity}. For odd $\chi$, the last $L(1,\chi)$ and the remainder are justified first with finite $d$-sums and then by summation by parts. The only other conditional endpoint is even $\chi$, $k=0$: in this case, the bounded partial sums give
$$
 \left|\sum_{d>D}\frac{\chi(d)}{\alpha^2d^2+r^2}\right|
 \le\frac{Cq}{\alpha^2D^2+r^2},\qquad
 \sum_{r\ge1}\frac{Cq}{\alpha^2D^2+r^2}=O(q/(\alpha D));
$$
this justifies the interchange and gives Katayama’s formulas \cite[(1), (2)]{Katayama74JMSJ}. The same formulas
appear in \cite[Theorem 4]{BerndtAA28}, and the case of arbitrary periodic coefficients is in \cite[
Theorem 1]{Bradley02}.
\end{proof}

Our next result is based on the simple observation that the same computations allow both factors to carry characters. If we take $\chi_i$ to be primitive nonprincipal characters modulo $q_i$ for $i\in\{1,2\}$, and if we put $\delta_i=\delta_{\chi_i}$, we could define the extended Lambert series
$$
 \Phi_{\chi_1,\chi_2,N}(\alpha)=\sum_{d,m\ge1}\chi_1(d)\chi_2(m)d^{-N}\exp(-2\pi\alpha dm).
$$
We now prove this generalization.
\begin{theorem}[Character-pair reciprocity]\label{thm:pair-char}
Suppose $\alpha>0$, $N\ge1$ is an integer, and $\chi_1(-1)\chi_2(-1)=(-1)^{N+1}$. Put $\gamma=q_2\alpha$ and $t=(N-\delta_2+1-\delta_1)/2$. Then
\begin{align}\label{eq:pair-char}
 \Phi_{\chi_1,\chi_2,N}(\alpha)
 ={}&\frac{q_2\gamma^{\delta_2}i^{1-\delta_2}}{\pi\tau(\bar\chi_2)}
 \sum_{j=0}^{t-1}(-\gamma^2)^jL(N-\delta_2-2j,\chi_1)L(2j+1+\delta_2,\bar\chi_2)\notag\\
 &+\chi_2(-1)i^{N-1}\gamma^{N-1}\frac{q_2\tau(\chi_1)}{q_1\tau(\bar\chi_2)}
 \Phi_{\bar\chi_2,\bar\chi_1,N}\left(\frac1{q_1q_2\alpha}\right).
\end{align}
An empty finite sum is zero.
\end{theorem}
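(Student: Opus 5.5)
We expand the inner Lambert series with Lemma \ref{prop:resolvent}, split the resulting rational kernel by finite polynomial division in $d$ (as in the proofs of Theorems \ref{b-thm:ramanujan} and \ref{thm:katayama}), and recognise the remainder, via Lemma \ref{prop:resolvent} read in reverse, as the dual Lambert series. All characters enter only through the two resolvent layers.

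\textbf{Step 1 (first resolvent layer).} For each fixed $d\ge1$, apply \eqref{eq:reciprocity-layer} with $\psi=\chi_2$, modulus $q_2$ and $\beta_0=\alpha d$ to $\sum_{m\ge1}\chi_2(m)\exp(-2\pi\alpha dm)$. Multiplying by $\chi_1(d)d^{-N}$ and summing over $d$ gives
$$
 \Phi_{\chi_1,\chi_2,N}(\alpha)=\frac{q_2\gamma^{\delta_2}i^{1-\delta_2}}{\pi\tau(\bar\chi_2)}
 \sum_{d\ge1}\chi_1(d)d^{-N+\delta_2}\sum_{m\ge1}\frac{\bar\chi_2(m)m^{1-\delta_2}}{\gamma^2d^2+m^2},
 \qquad \gamma=q_2\alpha.
$$

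\textbf{Step 2 (parity and division).} Since $\chi(-1)=1$ exactly when $\delta_\chi=1$, the hypothesis $\chi_1(-1)\chi_2(-1)=(-1)^{N+1}$ says that $(1-\delta_1)+(1-\delta_2)\equiv N+1\pmod 2$. Hence $t=(N-\delta_2+1-\delta_1)/2$ is an integer, and $N-\delta_2-2t=\delta_1-1$. Finite division in $d^2$ gives
$$
 \frac{d^{-(N-\delta_2)}}{\gamma^2d^2+m^2}
 =\sum_{j=0}^{t-1}(-\gamma^2)^j\frac{d^{-(N-\delta_2)+2j}}{m^{2j+2}}
 +(-\gamma^2)^t\frac{d^{1-\delta_1}}{m^{2t}(\gamma^2d^2+m^2)}.
$$
The $j$-th finite term sums to $L(N-\delta_2-2j,\chi_1)L(2j+1+\delta_2,\bar\chi_2)$, which is the first line of \eqref{eq:pair-char}. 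When $t=0$ the finite sum is empty, consistent with the statement.

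\textbf{Step 3 (second resolvent layer).} In the remainder, fix $m$ and write
$$
 \frac{d^{1-\delta_1}}{\gamma^2d^2+m^2}=\gamma^{-2}\frac{d^{1-\delta_1}}{d^2+(m/\gamma)^2}.
$$
Now read \eqref{eq:reciprocity-layer} backwards with $\psi=\bar\chi_1$, modulus $q_1$ and $\beta_0=m/(q_1\gamma)=m/(q_1q_2\alpha)$. The $d$-sum then becomes
$$
 \frac{\pi\tau(\chi_1)}{q_1}\,(\text{power of } m,\gamma,i)\sum_{n\ge1}\bar\chi_1(n)\exp\left(-\frac{2\pi mn}{q_1q_2\alpha}\right).
$$
The powers of $m$ should recombine: $m^{1-\delta_2}$ from Step 1, $m^{-2t}$ from the division, and $m^{\delta_1-1}$ from the layer should give $m^{-N}$. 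Summing over $m$ with $\bar\chi_2(m)$ then produces $\Phi_{\bar\chi_2,\bar\chi_1,N}(1/(q_1q_2\alpha))$. The remaining constants are collected with $\tau(\chi)\tau(\bar\chi)=\chi(-1)q$ and the powers of $i$. The target coefficient is $\chi_2(-1)i^{N-1}\gamma^{N-1}q_2\tau(\chi_1)/(q_1\tau(\bar\chi_2))$; I expect the parity relation to convert $i^{1-\delta_1}i^{1-\delta_2}(-1)^t$ into $\chi_2(-1)i^{N-1}$.

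\textbf{Main obstacle.} The expected difficulty is analytic rather than algebraic: justifying the interchanges when $\chi_1$ or $\chi_2$ is odd, or when $t$ is small. In those cases the $m$-sums of Step 1, or the $d$-sum $\sum_d\chi_1(d)d^{1-\delta_1}/(\gamma^2d^2+m^2)$, converge only conditionally. This also includes endpoint values such as $L(1,\chi)$ among the finite products. I would first truncate the $d$-sum at $D$, perform the division and both resolvent layers with finite sums, and then let $D\to\infty$. The tails would be controlled by summation by parts with bounded character partial sums, as in the proof of Theorem \ref{thm:katayama}, e.g.
$$
 \left|\sum_{d>D}\chi_1(d)\frac{d^{1-\delta_1}}{\gamma^2d^2+m^2}\right|\ll\frac{q_1}{\gamma^2D^{1+\delta_1}+m^2 D^{\delta_1-1}}.
$$
This bound is summable against the decaying $m$-factors uniformly enough for dominated convergence. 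The exponential decay of the dual Lambert series in $m$ handles the remaining sum absolutely.
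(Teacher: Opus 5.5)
Your plan is essentially the paper's proof. Your Step 2 division, multiplied by $m^{1-\delta_2}$, is exactly the paper's finite identity in $(x,y)$. The paper then identifies the two rational double sums with $\Phi_{\chi_1,\chi_2,N}(\alpha)$ and $\Phi_{\bar\chi_2,\bar\chi_1,N}(1/(q_1q_2\alpha))$ by applying \eqref{eq:reciprocity-layer} at the scales $\alpha x$ and $y/(q_1\gamma)$, as in your Steps 1 and 3. It justifies the limits by truncation and summation by parts, as you propose.

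Your tentative bookkeeping in Step 3 needs correcting, however.

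\textbf{The power of $m$ and the phase.} Reading \eqref{eq:reciprocity-layer} backwards with $\psi=\bar\chi_1$ puts $(m/\gamma)^{\delta_1}i^{1-\delta_1}$ in the denominator. So the $d$-sum equals $\gamma^{\delta_1-2}m^{-\delta_1}i^{\delta_1-1}\pi\tau(\chi_1)q_1^{-1}\sum_{n\ge1}\bar\chi_1(n)e^{-2\pi mn/(q_1q_2\alpha)}$. The layer therefore contributes $m^{-\delta_1}$, not $m^{\delta_1-1}$; with your exponents the total would be $m^{2\delta_1-N-1}$. It also contributes $i^{\delta_1-1}$, not $i^{1-\delta_1}$. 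With the correct factors:
\begin{itemize}
\item the power of $m$ is $1-\delta_2-2t-\delta_1=-N$;
\item the power of $\gamma$ is $\gamma^{\delta_2+2t+\delta_1-2}=\gamma^{N-1}$;
\item the phase is $(-1)^ti^{\delta_1-\delta_2}=\chi_2(-1)i^{N-1}$, which is the relation the paper records.
\end{itemize}
Your expected phase $(-1)^ti^{2-\delta_1-\delta_2}$ differs from this by the factor $\chi_1(-1)$, so it would give the wrong sign for odd $\chi_1$.

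\textbf{The tail bound.} For odd $\chi_1$ and $D<m/\gamma$, the kernel $d/(\gamma^2d^2+m^2)$ keeps increasing past $D$. Summation by parts alone therefore gives only $\ll_\gamma q_1(\gamma^2D^2+m^2)^{-1/2}$, not your displayed bound. This weaker form is what the paper uses, with the variables exchanged. Summed against $m^{\delta_1-N}$ it is still $O(\log D/D)$, so the tails vanish, including when $N=1$.
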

\begin{proof}
\textbf{1.} The parity condition gives $2t=N+1-\delta_1-\delta_2$, so $t$ is a
nonnegative integer. The proof starts with the finite identity
\begin{align*}
 &\frac{x^{\delta_2-N}y^{1-\delta_2}
       -(-\gamma^2)^t x^{1-\delta_1}y^{\delta_1-N}}
       {\gamma^2x^2+y^2}\\
 &\hspace{12mm}=
 \sum_{j=0}^{t-1}
 \frac{(-\gamma^2)^j}
      {x^{N-\delta_2-2j}y^{2j+1+\delta_2}}.
\end{align*}
This is the finite geometric-series identity; when $t=0$, both sides
are zero. Multiply by $\chi_1(x)\bar\chi_2(y)$ and sum over
$1\le x,y\le M$.\\

\noindent \textbf{2.} As $M\to\infty$, the right-hand side tends to
$$
 \sum_{j=0}^{t-1}(-\gamma^2)^j
 L(N-\delta_2-2j,\chi_1)L(2j+1+\delta_2,\bar\chi_2).
$$
Both exponents are at least one, and the characters are nonprincipal.
For the two terms on the left, the character-kernel identity
\eqref{eq:reciprocity-layer}, applied at scales $\alpha x$ and
$y/(q_1\gamma)$, gives respectively
$$
 \frac{\pi\tau(\bar\chi_2)}
      {q_2\gamma^{\delta_2}i^{1-\delta_2}}
 \Phi_{\chi_1,\chi_2,N}(\alpha),
 \qquad
 (-\gamma^2)^t
 \frac{\pi\tau(\chi_1)}
      {q_1\gamma^{2-\delta_1}i^{1-\delta_1}}
 \Phi_{\bar\chi_2,\bar\chi_1,N}
       \left(\frac1{q_1q_2\alpha}\right).
$$
Moving the second term to the right and multiplying by
$q_2\gamma^{\delta_2}i^{1-\delta_2}/(\pi\tau(\bar\chi_2))$
proves the formula, since
$$
 2t+\delta_1+\delta_2-2=N-1,
 \qquad
 (-1)^t i^{\delta_1-\delta_2}
 =\chi_2(-1)i^{N-1}.
$$

\noindent\textbf{3.} For completeness, the rectangular limits used above are justified
as follows. Bounded character partial sums and summation by parts give
\begin{align*}
 &\sum_{x\ge1}x^{\delta_2-N}
 \left|\sum_{y>M}
 \frac{\bar\chi_2(y)y^{1-\delta_2}}{\gamma^2x^2+y^2}\right|\\
 &\qquad\ll
 \sum_{x\ge1}
 \frac{x^{\delta_2-N}}
      {(\gamma^2x^2+M^2)^{(1+\delta_2)/2}}
 \ll\frac{\log(2M)}{M}.
\end{align*}
The first bound uses that $u^{1-\delta_2}/(\gamma^2x^2+u^2)$
tends to zero and is decreasing or has a single turning point.
The second follows by splitting at $x=M$ and using $N\ge1$.
Exchanging the variables gives the corresponding vanishing $x$-tail
for the second kernel. After the inner sums are evaluated, the outer
sums converge absolutely by exponential decay. Thus the rectangular
limits equal the iterated sums supplied by the character-kernel
identity, including when $N=1$.
\end{proof}

\begin{remark}
For $N\ge2$, Theorem \ref{thm:pair-char} is equivalent to \cite[Theorem 6.1]{BerndtStraub16}, with $\chi=\chi_1$,
$\psi=\bar\chi_2$, $k=N+1$ and $\tau=iq_2\alpha$; the case $N=1$ lies outside the hypothesis $k\ge3$ there.
For two characters of the same modulus see also \cite[Theorem 1]{BerndtAA28} and \cite[Section 4]{Berndt77}.
Our contribution here is the derivation from APSF, which also covers $N=1$. Its finite sum of products of $L$-values can also be expressed using generalized Bernoulli numbers, although this rewriting is not needed for our proof. A separate example gives an explicit theta-function evaluation. With $q_0=\exp(-2\pi\alpha)$, Theorem \ref{thm:T1} with $b(d)=\chi_{-4}(d)$, $c(x)=|x|^2$ and $g(x)=\exp(-2\pi\alpha|x|)$ gives
\begin{equation}\label{eq:CM-family-app1}
 \sum_{d,m\ge1}\chi_{-4}(d)m^2q_0^{dm}
 =\frac14\sum_{d\ge1}\chi_{-4}(d)\coth(\pi\alpha d)\operatorname{csch}^2(\pi\alpha d)
 =\frac1{16}\vartheta_2(q_0)^4\vartheta_3(q_0)^2.
\end{equation}
The first equality uses $\sum_{r\in\Z}(a+ir)^{-3}=\pi^3\coth(\pi a)\operatorname{csch}^2(\pi a)$; the second is Jacobi's theta identity, with $\vartheta_2(q)=\sum_{n\in\Z}q^{(n+1/2)^2}$ and $\vartheta_3(q)=\sum_{n\in\Z}q^{n^2}$. At $\alpha=1/2$, the classical values \cite{DLMF} $\vartheta_2=2^{-1/4}\vartheta_3$ and $\vartheta_3=\Gamma(1/4)/(\sqrt2\pi^{3/4})$ give the classical evaluation (equivalent to $\sum_{n\ge1}n^2/\cosh(\pi n)=\Gamma(1/4)^6/(128\pi^{9/2})$; cf.
Ramanujan’s Entries 15--17 in Chapter 17 of \cite{BerndtNotebooksIII})
\begin{equation}\label{eq:CM-point-app1}
 \frac14\sum_{\substack{d\ge1\\d\ \mathrm{odd}}}(-1)^{(d-1)/2}
 \frac{\cosh(\pi d/2)}{\sinh^3(\pi d/2)}
 =\frac{\Gamma(1/4)^6}{256\pi^{9/2}}.
\end{equation}
\end{remark}
\subsection{Reciprocities related to field extensions: Gaussian integers}\label{sec:fields_examples}\label{sec:field-reciprocity_examples}
We first work out the Gaussian-integer case, identifying the norm coefficients, computing the finite character transform, and deriving the resulting Lambert–Kelvin formulas. This example explains the arithmetic meaning of the sums and the constants appearing in the general imaginary-quadratic statement below. The facts about quadratic integer rings and their prime factorizations are classical; see \cite{Cox13,Grosswald85,Neukirch99}. The resolvent method is that of Koshliakov \cite{Koshliakov36}, and the untwisted imaginary-quadratic identities are studied in \cite{BK25}.

\subsubsection{The Gaussian integers: the lattice and its coefficients}
The ring of integers of $\Q(i)$ is $\Z[i]$, and
$$
 N(x+iy)=(x+iy)(x-iy)=x^2+y^2=:Q_G(x,y).
$$
Rounding the real and imaginary parts of a complex number leaves an error of squared modulus at most $1/2$. Applied to a quotient, this gives Euclidean division in $\Z[i]$; hence every ideal is principal. The norm-one elements are $\pm1,\pm i$, so every nonzero ideal has four generators of the same norm. Prime factorization gives the following number $F_G(p^j)$ of ideals of norm $p^j$:
$$
 F_G(p^j)=
 \begin{cases}
 1,&p=2,\\
 j+1,&p\equiv1\pmod4,\\
 1,&p\equiv3\pmod4,\ j\text{ even},\\
 0,&p\equiv3\pmod4,\ j\text{ odd}.
 \end{cases}
$$
Indeed, $2$ ramifies, a prime congruent to $1$ splits into two conjugate primes whose exponents may be chosen as $0,j$ through $j,0$, and a prime congruent to $3$ remains prime with norm $p^2$. These are exactly the local values of $\sum_{d\mid n}\chi_{-4}(d)$. Multiplying the local factors therefore gives
\begin{equation}\label{ex:gaussian-coefficients}
 r_2(n)=4F_G(n),\qquad F_G(n)=\sum_{d\mid n}\chi_{-4}(d),\qquad
 \sum_{n\ge1}\frac{F_G(n)}{n^s}=\zeta(s)L(s,\chi_{-4})\quad(\Re s>1).
\end{equation}
Thus a radial lattice sum can also be read as a divisor sum. The lattice basis matrix is the identity, with determinant one and dual norm $u^2+v^2$. For example, Theorem \ref{thm:polynomial-T1} with $P=1$, $c(u)=u$, $b(j)=\mathbf1_{j=1}$, $h(j)=j$, $\omega(j)=1$, and $g(\yy)=e^{-\pi t|\yy|^2}$ gives
$$
 \Theta_G(t):=1+4\sum_{n\ge1}F_G(n)e^{-\pi tn},\qquad
 \Theta_G(t)=t^{-1}\Theta_G(1/t),\quad t>0,
$$
because the planar Fourier transform of $e^{-\pi t|\yy|^2}$ is $t^{-1}e^{-\pi|\xxi|^2/t}$. For the character-weighted sums below, we will specify the test function on each residue class and then insert its finite Fourier coefficient.

\subsubsection{Computing the two Fourier kernels}
We use the modified Bessel function of the second kind of order zero,
$$
 K_0(z)=\int_0^\infty e^{-z\cosh v}\,dv,\qquad \Re z>0
 \quad\text{\cite[\S10.32]{DLMF}}.
$$
The real and imaginary parts along the ray $e^{i\pi/4}z$ are the classical order-zero Kelvin functions. We suppress their order-zero subscript and write \cite[Eq. 10.61.2]{DLMF}
\begin{equation}\label{eq:Bessel-combinations}
 \begin{aligned}
 \ker(z)&=\frac{K_0(e^{i\pi/4}z)+K_0(e^{-i\pi/4}z)}2,\\
 \kei(z)&=\frac{K_0(e^{i\pi/4}z)-K_0(e^{-i\pi/4}z)}{2i},\qquad z>0.
 \end{aligned}
\end{equation}
Both are real because the two values of $K_0$ are conjugate. We use these standard names throughout the planar reciprocity formulas displayed below.

We want to obtain the following identity, valid for $x>0$:
\begin{align}\label{ex:gaussian-untwisted}
 \sum_{n\ge1}\frac{F_G(n)}{n^2+x^2}
 ={}&\frac{\pi^2}{8x}-\frac1{4x^2}\notag\\
 &-\frac{2\pi}{x}\sum_{n\ge1}F_G(n)\kei(2\pi\sqrt{xn}).
\end{align}
The denominator determines the test function. Since $n=|\vv|^2$ when the lattice points are grouped by their squared lengths, apply Theorem \ref{thm:polynomial-T1} with
$$
 P=1,\qquad c(u)=u,\qquad b(j)=\mathbf1_{j=1},\qquad
 h(j)=j,\qquad\omega(j)=1,\qquad
 g(\yy)=\frac1{|\yy|^4+x^2}.
$$
These data give $c(P)=1$ and retain the origin, as in Remark \ref{rem:constant-polynomial}. The APSF becomes ordinary planar Poisson summation:
$$
 \frac1{x^2}+4\sum_{n\ge1}\frac{F_G(n)}{n^2+x^2}
 =\widehat g(0)+\sum_{\mm\in\Z^2\setminus\{0\}}\widehat g(\mm).
$$
Thus it remains to compute the Fourier transform of this particular $g$.

For $\Re z>0$ and $\xxi\ne0$, integrating the planar Gaussian gives the resolvent transform, with the principal square root,
$$
 \pi\int_0^\infty v^{-1}e^{-zv-\pi^2|\xxi|^2/v}\,dv
 =2\pi K_0(2\pi|\xxi|\sqrt z)
 \qquad\text{\cite[\S10.32]{DLMF}}.
$$
This is the transform of $(|\yy|^2+z)^{-1}$ away from zero, understood with Gaussian damping. With $r=|\yy|$, use the partial fractions
$$
 \frac1{r^4+x^2}
 =\frac1{2ix}\left(\frac1{r^2-ix}-\frac1{r^2+ix}\right).
$$
To make the limiting step explicit, take $z=\epsilon\pm ix$, with $\epsilon>0$, and form
the difference before removing the Gaussian damping. The resulting function is
$$
\frac1{2ix}\left(\frac1{r^2+\epsilon-ix}
-\frac1{r^2+\epsilon+ix}\right)
=\frac1{(r^2+\epsilon)^2+x^2}
\le\frac1{r^4+x^2}.
$$
The last function is integrable on the plane. Dominated convergence therefore permits
$\epsilon\downarrow0$ in the Fourier integral. Continuity of the principal-square-root
Bessel expression at $z=\pm ix$ gives
\begin{equation}\label{ex:radial-pairs}
 \widehat g(\xxi)
 =\frac{\pi}{ix}\{K_0(e^{-i\pi/4}2\pi|\xxi|\sqrt x)
                  -K_0(e^{i\pi/4}2\pi|\xxi|\sqrt x)\}
 =-\frac{2\pi}{x}\kei(2\pi|\xxi|\sqrt x).
\end{equation}
At zero, polar coordinates give
$$
 \widehat g(0)=\pi\int_0^\infty\frac{dv}{v^2+x^2}=\frac{\pi^2}{2x}.
$$
The function $g$ is smooth and has integrable derivatives of every order, so its periodization and Fourier series justify the Poisson application. Grouping the nonzero dual vectors by $|\mm|^2=n$, using $r_2(n)=4F_G(n)$, and dividing by four proves \eqref{ex:gaussian-untwisted}. The two elementary terms come from the Fourier zero frequency and the removed lattice origin, respectively.

The character calculation will also require the numerator $n/(n^2+x^2)$. Its planar counterpart $|\yy|^2/(|\yy|^4+x^2)$ is not integrable. Instead, use the integrable function
$$
 g(\yy)=\frac{|\yy|^2}{(|\yy|^4+x^2)^2},\qquad
 \frac{r^2}{(r^4+x^2)^2}
 =\frac1{4ix}\left\{\frac1{(r^2-ix)^2}-\frac1{(r^2+ix)^2}\right\}.
$$
The squared resolvents are integrable. Differentiating the Gaussian resolvent transform with respect to $z$ and substituting $z=\pm ix$ yields
\begin{equation}\label{eq:ker-integrable-transform}
 \widehat g(\xxi)=-\frac\pi x\frac{d}{dx}\ker(2\pi|\xxi|\sqrt x)
 \quad(\xxi\ne0),\qquad
 \widehat g(0)=\frac{\pi}{2x^2}.
\end{equation}
After applying APSF to this $g$, integration in $x$ will recover the required numerator and the Kelvin function $\ker$. This keeps every Poisson application within the integrable case.

\subsubsection{A Gaussian character twist, computed modulo three}
Let $\eta=\chi_{-3}$, so $\eta(0)=0$, $\eta(1)=1$, $\eta(2)=-1$ modulo $3$, and define
$$
 A_G(n)=\eta(n)F_G(n),\qquad
 Z_G(s)=\sum_{n\ge1}\frac{A_G(n)}{n^s}
       =L(s,\chi_{-3})L(s,\chi_{12})\quad(\Re s>1),
$$
where $\chi_{12}=\chi_{-3}\chi_{-4}$. The last identity follows by writing $n=de$ and distributing the completely multiplicative factor $\eta(de)$ over the divisor sum in \eqref{ex:gaussian-coefficients}.

The weight $W(a,b)=\eta(a^2+b^2)$ is zero at $(0,0)$, equals $1$ at the four nonzero axis points modulo $3$, and equals $-1$ at the four remaining points. Write $S(u)=\en{u/3}+\en{2u/3}$; this is $2$ when $3\mid u$ and $-1$ otherwise. Direct summation gives
\begin{equation}\label{ex:gaussian-finite}
 \sum_{a,b\bmod3}W(a,b)\en{(ua+vb)/3}
 =S(u)+S(v)-S(u)S(v)=3\eta(u^2+v^2).
\end{equation}
The finite transform has the same norm weight and vanishes at zero. To use Theorem \ref{thm:polynomial-T1}, fix a residue vector $\aaa\in\{0,1,2\}^2$ and take
$$
 P=1,\quad c(u)=u,\quad b(j)=\mathbf1_{j=1},\quad
 h(j)=j,\quad\omega(j)=1,\quad
 g(\yy)=f(\aaa+3\yy).
$$
Here $f$ will be a Gaussian or one of the two integrable rational functions just computed. Scaling the Fourier integral gives
$$
 \sum_{\vv\in\Z^2}f(\aaa+3\vv)
 =\frac19\sum_{\mm\in\Z^2}\en{\aaa\cdot\mm/3}\widehat f(\mm/3).
$$
Multiply by $W(\aaa)$ and sum over the nine residues. Formula \eqref{ex:gaussian-finite} gives
\begin{equation}\label{eq:gaussian-weighted-poisson}
 \sum_{a,b\in\Z}W(a,b)f(a,b)
 =\frac13\sum_{u,v\in\Z}\eta(u^2+v^2)\widehat f(u/3,v/3).
\end{equation}
For $f(\yy)=e^{-\pi t|\yy|^2}$, grouping by the norm and dividing by four gives
$$
 \sum_{n\ge1}A_G(n)e^{-\pi tn}
 =\frac1{3t}\sum_{n\ge1}A_G(n)e^{-\pi n/(9t)}.
$$
For $f(\yy)=(|\yy|^4+x^2)^{-1}$, the same grouping and \eqref{ex:radial-pairs} give the first identity below. Using $f(\yy)=|\yy|^2/(|\yy|^4+x^2)^2$ and \eqref{eq:ker-integrable-transform} gives
$$
 \sum_{n\ge1}\frac{A_G(n)n}{(n^2+x^2)^2}
 =-\frac{\pi}{3x}\frac{d}{dx}
   \sum_{n\ge1}A_G(n)\ker\left(\frac{2\pi\sqrt{xn}}3\right).
$$
Replace $x$ by the integration variable $t$, multiply by $2t$, and integrate over $x\le t<\infty$. The Kelvin series tends to zero exponentially, so
$$
 \sum_{n\ge1}\frac{A_G(n)n}{n^2+x^2}
 =\frac{2\pi}3\sum_{n\ge1}A_G(n)\ker\left(\frac{2\pi\sqrt{xn}}3\right).
$$
Finally subtract this equality from $Z_G(1)=\sum_{n\ge1}A_G(n)/n$, using $1/n-n/(n^2+x^2)=x^2/[n(n^2+x^2)]$. We obtain
\begin{align}\label{ex:gaussian-resolvents}
 \sum_{n\ge1}\frac{A_G(n)}{n^2+x^2}
 &=-\frac{2\pi}{3x}\sum_{n\ge1}A_G(n)\kei\left(\frac{2\pi\sqrt{xn}}3\right),\notag\\
 x^2\sum_{n\ge1}\frac{A_G(n)}{n(n^2+x^2)}
 &=Z_G(1)-\frac{2\pi}3\sum_{n\ge1}A_G(n)\ker\left(\frac{2\pi\sqrt{xn}}3\right).
\end{align}
Here the conditionally convergent sums are taken in increasing order of $n$. To justify the integration, complete $3\times3$ residue blocks cancel, and only $O(\sqrt u)$ blocks meet the boundary of $a^2+b^2\le u$. Hence $\sum_{n\le u}A_G(n)=O(\sqrt u)$. Summation by parts proves convergence of $Z_G(1)$ and, uniformly in the finite cutoff $M$,
$$
 \left|\sum_{n\le M}\frac{A_G(n)n}{(n^2+t^2)^2}\right|\ll t^{-5/2}
 \qquad(t\ge1).
$$
Indeed the derivative of $u/(u^2+t^2)^2$ is bounded in absolute value by $3/(u^2+t^2)^2$, and $\int_0^\infty u^{1/2}(u^2+t^2)^{-2}\,du\ll t^{-5/2}$. Since $2t\,t^{-5/2}$ is integrable at infinity, dominated convergence permits integration from $t=x$ to infinity before taking $M\to\infty$. The Kelvin series and its derivatives converge locally uniformly for $x>0$. Continuity from $\Re s>1$ identifies $Z_G(1)$ with the displayed product of Dirichlet $L$-values.

\subsubsection{The Gaussian Lambert reciprocity}
We now finish the calculation at exponents three and two. Put
$$
 \sigma_{G,j}(M)=\sum_{d\mid M}A_G(d)d^j,\qquad
 a>0.
$$
The scalar Fourier partial-fraction calculation \eqref{eq:mittag-leffler} used with Theorem \ref{thm:T1} gives
$$
 1+\frac2{e^{2\pi an}-1}=\frac1{\pi an}
              +\frac{2an}\pi\sum_{r\ge1}\frac1{a^2n^2+r^2}.
$$
Multiply first by $A_G(n)n^{-3}$ and then by $A_G(n)n^{-2}$. In the first case the required division is
$$
 \frac{n^{-2}}{a^2n^2+r^2}
 =\frac{n^{-2}}{r^2}-\frac{a^2}{r^2(a^2n^2+r^2)}.
$$
Consequently the cubic expression is
$$
 \frac{Z_G(4)}{\pi a}+\frac{2a}\pi\zeta(2)Z_G(2)
 -\frac{2a}\pi\sum_{r\ge1}\frac1{r^2}
       \sum_{n\ge1}\frac{A_G(n)}{n^2+(r/a)^2}.
$$
Insert the first formula in \eqref{ex:gaussian-resolvents} with $x=r/a$. Setting $M=rn$ replaces $\sum_{rn=M}A_G(n)r^{-3}$ by $\sigma_{G,3}(M)/M^3$. This proves
\begin{align}\label{ex:gaussian-lambert-odd}
 Z_G(3)+2\sum_{n\ge1}\frac{A_G(n)n^{-3}}{e^{2\pi an}-1}
 ={}&\frac{Z_G(4)}{\pi a}+\frac{\pi a}{3}Z_G(2)\notag\\
 &+\frac{4a^2}{3}\sum_{M\ge1}\frac{\sigma_{G,3}(M)}{M^3}
 \kei\left(\frac{2\pi}3\sqrt{\frac M a}\right).
\end{align}
For the quadratic expression use instead
$$
 \frac1{n(a^2n^2+r^2)}
 =\frac1{r^2}\frac{(r/a)^2}{n(n^2+(r/a)^2)}.
$$
The second formula in \eqref{ex:gaussian-resolvents} now contributes $Z_G(1)\zeta(2)$ and the symmetric Bessel sum. The same regrouping gives
\begin{align}\label{ex:gaussian-lambert-even}
 Z_G(2)+2\sum_{n\ge1}\frac{A_G(n)n^{-2}}{e^{2\pi an}-1}
 ={}&\frac{Z_G(3)}{\pi a}+\frac{\pi a}{3}Z_G(1)\notag\\
 &-\frac{4a}{3}\sum_{M\ge1}\frac{\sigma_{G,2}(M)}{M^2}
 \ker\left(\frac{2\pi}3\sqrt{\frac M a}\right).
\end{align}
Since $|A_G(n)|\le\tau(n)$, the original rational double sums are absolutely convergent. The Bessel sums converge absolutely by exponential decay in $\sqrt{rn}$; the isolated $Z_G(1)$ has the natural-order meaning proved above. Thus these computations justify every interchange and display the reciprocity with all constants evaluated.

\subsection{Reciprocities related to field extensions: General case}\label{sec:fields}\label{sec:field-reciprocity}
The finite transform \eqref{ex:gaussian-finite} explains why an eigenrelation matters. It turns a Fourier coefficient indexed by two residues back into a character of a norm, allowing the dual lattice sum to be regrouped with the same arithmetic coefficients: a different weight still has a finite Fourier transform, but need not have this property! For general imaginary quadratic fields, binary quadratic forms organize the lattices attached to distinct ideal classes; the form--ideal correspondence is developed in \cite{Cox13,Neukirch99}. The example above has class number one and now we give the general calculation, allowing every ideal class and all character moduli in the stated range. For real quadratic fields such as $\Q(\sqrt3)$, passing from embedded elements to ideal norms requires additional averaging over the infinite unit group \cite{Neukirch99}.

The formulas below apply to every ideal class.
Let $K=\Q(\sqrt{-D})$ have discriminant $-D<0$, and let $\mathcal O_K$
be its ring of integers. For a nonzero integral ideal $\mathfrak a$, its
norm is $N\mathfrak a=[\mathcal O_K:\mathfrak a]$. Write $h_K$ for the
number of ideal classes and $w_K$ for the number of units of $\mathcal O_K$.
For each ideal class $C$, choose an integral ideal $\mathfrak a_C$ in the
inverse class $C^{-1}$ and an oriented integral basis
$\alpha_C,\beta_C$ of $\mathfrak a_C$. Its normalized norm is a positive
primitive integral quadratic form
$$
Q_C(x,y)=\frac{N_{K/\Q}(x\alpha_C+y\beta_C)}{N\mathfrak a_C}
=a_Cx^2+b_Cxy+c_Cy^2,\qquad 4a_Cc_C-b_C^2=D.
$$
This is the classical form--ideal correspondence; see
\cite{Cox13,Neukirch99}. Put
$$
r_C(n)=\#\{(x,y)\in\Z^2:Q_C(x,y)=n\}.
$$
To interpret this count, take a nonzero element
$\alpha=x\alpha_C+y\beta_C\in\mathfrak a_C$. Since
$(\alpha)\subseteq\mathfrak a_C$, the ideal
$$
\mathfrak b=(\alpha)\mathfrak a_C^{-1}
$$
is integral, belongs to $C$, and has norm
$$
N\mathfrak b=\frac{N_{K/\Q}(\alpha)}{N\mathfrak a_C}=Q_C(x,y).
$$
Conversely, if $\mathfrak b$ is an integral ideal in $C$, then
$\mathfrak b\mathfrak a_C$ is principal. Every generator $\alpha$ of
this product lies in $\mathfrak a_C$, because
$\mathfrak b\mathfrak a_C\subseteq\mathfrak a_C$, and gives a vector
$(x,y)$ in the displayed basis. Two such generators differ by a unit,
so there are exactly $w_K$ of them. Consequently
$$
r_C(n)=w_K\,\#\{\mathfrak b\subseteq\mathcal O_K:
[\mathfrak b]=C,\ N\mathfrak b=n\}.
$$
Summing over all ideal classes gives
$$
\sum_C r_C(n)=w_KF(n),\qquad
F(n)=\sum_{d\mid n}\chi_{-D}(d),\qquad
\zeta_K(s)=\sum_{n\ge1}\frac{F(n)}{n^s}
=\zeta(s)L(s,\chi_{-D}),\quad \Re s>1.
$$
Here $F(n)$ counts all nonzero integral ideals of norm $n$; its divisor
formula follows from the usual Euler factors of the quadratic field.
For $K=\Q(i)$, the single form is $x^2+y^2$ and $w_K=4$, recovering
the Gaussian example.

For a character $\chi$ modulo $q$ define
$$
 L_K(s,\chi)=\sum_{n\ge1}\frac{\chi(n)F(n)}{n^s}
 =L(s,\chi)L(s,\chi\chi_{-D}),\qquad \Re s>1.
$$
The key finite computation is the following: for $Q(x,y)=ax^2+bxy+cy^2$, the dual quadratic expression is
$$Q(-v,u)=cu^2-buv+av^2;$$
the rotation $(u,v)\mapsto(-v,u)$ is a bijection of $\Z^2$, so it preserves the representation numbers and no new quadratic-form symbol is needed: the eigenrelation says that the transformed periodic weight is again a conjugate character of the same norm, up to rotation and an explicit scalar. Without it, the Fourier formula still exists, but retains a general finite coefficient rather than a recognizable twisted norm series.
\begin{lemma}[Finite Fourier eigenrelation]\label{prop:eig}
Let $\chi$ be primitive modulo $q>1$. Suppose either $q$ is odd and $(q,D)=1$, or $q=2^j$, $j\ge2$, and $D$ is odd. Then
\begin{equation}\label{eq:norm-finite-eigen}
 \sum_{x,y\bmod q}\chi(Q(x,y))\en{\frac{1}q(ux+vy)}
 =c_\chi\bar\chi(Q(-v,u)),\qquad
 c_\chi=\chi_{-D}(q)\chi(D)\tau(\chi)^2,
\end{equation}
where for $q=2^j$ the first factor means $\chi_{-D}(2)^j$. In particular $|c_\chi|=q$ and the left side at $(u,v)=(0,0)$ is zero.
\end{lemma}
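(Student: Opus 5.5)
The plan is to expand the character through its Gauss sum, reduce the inner double sum to a two‑dimensional quadratic Gauss sum, and then sum back over the Gauss‑sum variable.

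\textbf{Step 1: Gauss‑sum expansion.} Since $\chi$ is primitive, for every integer $n$, including those not coprime to $q$,
$$
\chi(n)=\frac1{\tau(\bar\chi)}\sum_{a\bmod q}\bar\chi(a)\ee{an/q}.
$$
Inserting this with $n=Q(x,y)$ gives
$$
\mathrm{LHS}=\frac1{\tau(\bar\chi)}\sum_{(a,q)=1}\bar\chi(a)\,G_a(u,v),\qquad
G_a(u,v)=\sum_{x,y\bmod q}\ee{\frac{aQ(x,y)-ux-vy}{q}}.
$$

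\textbf{Step 2: completing the square.} Write $Q(\xx)=\tfrac12\xx^TA\xx$ with $A=\begin{pmatrix}2a_Q&b\\ b&2c\end{pmatrix}$, where $a_Q$ is the $x^2$ coefficient of $Q$ (renamed to avoid a clash with the summation variable $a$). Then $\det A=D$, and
$$
Q(\xx+\zz)=Q(\xx)+\xx^TA\zz+Q(\zz).
$$
In both cases of the lemma $D$ is a unit modulo $q$. So for $a$ coprime to $q$ we can solve $aA\zz\equiv(u,v)^T\pmod q$ with an integer vector $\zz$. Shifting $\xx\mapsto\xx+\zz$ then gives
$$
G_a(u,v)=\ee{-aQ(\zz)/q}\,G_a(0,0).
$$
Since $A^{-1}=D^{-1}\begin{pmatrix}2c&-b\\-b&2a_Q\end{pmatrix}$, we have $aQ(\zz)\equiv\bar a\bar D\,Q(-v,u)\pmod q$, where bars denote inverses modulo $q$. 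This is where the rotated form $Q(-v,u)=cu^2-buv+a_Qv^2$ appears.

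\textbf{Step 3: summing over $a$.} Substitute $a'=-\bar a\bar D$, so that $\bar\chi(a)=\chi(-D)\chi(a')$. If $G_a(0,0)=g_q$ does not depend on $a$, the remaining sum is
$$
\sum_{a'}\chi(a')\ee{a'Q(-v,u)/q}=\bar\chi(Q(-v,u))\,\tau(\chi).
$$
This identity holds even when $Q(-v,u)$ is not coprime to $q$, again by primitivity. Hence
$$
\mathrm{LHS}=g_q\,\chi(-D)\,\frac{\tau(\chi)}{\tau(\bar\chi)}\,\bar\chi(Q(-v,u)).
$$
Using $\tau(\bar\chi)=\chi(-1)\overline{\tau(\chi)}$ and $|\tau(\chi)|^2=q$ gives $\tau(\chi)/\tau(\bar\chi)=\chi(-1)\tau(\chi)^2/q$. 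So the claim reduces to showing $g_q=q\,\chi_{-D}(q)$, with $\chi_{-D}(2)^j$ when $q=2^j$.

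The last assertions are then immediate: $|c_\chi|=|\tau(\chi)|^2=q$, and at $(u,v)=(0,0)$ the right side is $c_\chi\bar\chi(0)=0$ because $q>1$.

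\textbf{Step 4: the Gauss sum $g_q$, which is the main obstacle.}

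For odd $q$, I would diagonalize $Q$ over $\Z/q\Z$, which is possible since $2$ is invertible, as $\lambda_1X^2+\lambda_2Y^2$ with $4\lambda_1\lambda_2\equiv D$. The one‑variable Gauss sums satisfy $g(a\lambda;q)=\bigl(\tfrac{a\lambda}{q}\bigr)\varepsilon_q\sqrt q$ with the Jacobi symbol, valid for all odd $q$, not only primes. Their product is
$$
\Bigl(\frac{\lambda_1\lambda_2}{q}\Bigr)\varepsilon_q^2\,q=\Bigl(\frac{D}{q}\Bigr)\Bigl(\frac{-1}{q}\Bigr)q=\chi_{-D}(q)\,q,
$$
which is independent of $a$, as required. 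One should also check that the factor $(a/q)^2=1$ drops out.

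For $q=2^j$, the coefficient $b$ is odd because $D$ is odd, so $Q$ is a nondegenerate binary form over $\Z_2$. I would reduce it by a unimodular change of variables over $\Z/2^j\Z$ to either $XY$ (when $D\equiv7\pmod 8$) or $X^2+XY+Y^2$ (when $D\equiv3\pmod8$). One then computes directly: the sum for $aXY$ equals $2^j$, and the sum for $a(X^2+XY+Y^2)$ equals $(-1)^j2^j$, for every odd $a$. These match $\chi_{-D}(2)^j2^j$, since $\chi_{-D}(2)=+1$ or $-1$ according as $-D\equiv1$ or $5\pmod 8$.

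The one point needing care is that the shift argument in Step 2 uses only the integral matrix $A$, with no division by $2$, so it survives for $q=2^j$ because $\det A=D$ is odd.
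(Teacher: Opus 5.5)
Your proposal is correct and follows essentially the same route as the paper. Both arguments expand $\chi(Q)$ by its Gauss sum, translate to complete the square (producing the phase $\mathrm e^{-2\pi i\,\overline{aD}\,Q(-v,u)/q}$), evaluate the unshifted two-dimensional Gauss sum as $q\chi_{-D}(q)$, and invert the summation variable to recover $\tau(\chi)\bar\chi(Q(-v,u))$. The one difference is at $q=2^j$: you quote the $2$-adic classification to reduce to $XY$ or $X^2+XY+Y^2$, whereas the paper evaluates the sum for the given form directly via $G_j=4G_{j-2}$, $G_1=2(-1)^{a_Qc}$, $G_2=4$, which avoids that citation.
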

\begin{proof}
Insert $\chi(Q)=\tau(\bar\chi)^{-1}\sum_{t\bmod q}\bar\chi(t)\ee{tQ/q}$. For a unit $t$, completion of the two squares gives
$$
 \sum_{x,y\bmod q}\ee{\frac{1}q(tQ(x,y)-ux-vy)}
 =q\chi_{-D}(q)\en{\frac{1}q\overline{tD}\,Q(-v,u)}.
$$
For odd $q$, a determinant-one change of variables modulo $q$ makes the leading coefficient
a unit. Indeed, nondegeneracy modulo each prime dividing $q$ provides a vector on which
$Q$ is nonzero; combine these vectors by the Chinese remainder theorem and extend the
resulting vector to a determinant-one basis modulo $q$. Under $(x,y)^{\mathsf
T}=T(x',y')^{\mathsf T}$, the linear coefficient vector becomes $T^{\mathsf
T}(u,v)^{\mathsf T}$; the discriminant is unchanged. In the new coordinates we may
diagonalize as
$$
Q=a(x+b(2a)^{-1}y)^2+D(4a)^{-1}y^2.
$$
The two quadratic Gauss sums have product $q\chi_{-D}(q)$. Returning to the original
coordinates gives the displayed dual expression $Q(-v,u)$. For $q=2^j$, the matrix $\left(\begin{smallmatrix}2a&b\\b&2c\end{smallmatrix}\right)$ has odd determinant $D$ and is invertible modulo $q$, so translation still completes the square. Its unshifted Gauss sum is $2^j\chi_{-D}(2)^j$: separating even and odd variables gives the recurrence $G_j=4G_{j-2}$, with $G_1=2(-1)^{ac}$ and $G_2=4$, and $(-1)^{ac}=\chi_{-D}(2)$. Finally replace $t$ by its inverse in the remaining character sum. It is
$$
 \frac{q\chi_{-D}(q)}{\tau(\bar\chi)}
 \bar\chi(-D^{-1}Q(-v,u))\tau(\chi)
 =\chi_{-D}(q)\chi(D)\tau(\chi)^2\bar\chi(Q(-v,u)),
$$
using $\tau(\chi)\tau(\bar\chi)=\chi(-1)q$. This proves the asserted normalization and phase.
\end{proof}

For general order, the symbol $K_\nu(z)$ denotes the modified Bessel function of the second kind with order $\nu$. One integral definition, valid for $\nu\in\C$ and $\Re z>0$, is \cite[\S10.32]{DLMF}
\begin{equation}\label{eq:Kdefinition}
 K_\nu(z)=\frac12\int_0^\infty u^{\nu-1}
       \exp\left(-\frac z2(u+u^{-1})\right)\,du.
\end{equation}
Observe that the order-zero transforms are \eqref{ex:radial-pairs} and \eqref{eq:ker-integrable-transform}; for a general quadratic form, only the lattice scaling and finite character transform change, and the partial fractions explain why these Kelvin functions occur \cite{Koshliakov36,KT15,BK25}.

\begin{proposition}[The two field resolvents]\label{prop:twch}
Under the hypotheses of Lemma \ref{prop:eig}, for $x>0$,
\begin{align}\label{eq:Bessel-resolvents}
 \sum_{n\ge1}\frac{\chi(n)F(n)}{n^2+x^2}
 &=-\frac{4\pi c_\chi}{q^2\sqrt D\,x}\sum_{n\ge1}\bar\chi(n)F(n)
 \kei\left(\frac{4\pi}q\sqrt{\frac{xn}{D}}\right),\notag\\
 x^2\sum_{n\ge1}\frac{\chi(n)F(n)}{n(n^2+x^2)}
 &=L_K(1,\chi)-\frac{4\pi c_\chi}{q^2\sqrt D}\sum_{n\ge1}\bar\chi(n)F(n)
 \ker\left(\frac{4\pi}q\sqrt{\frac{xn}{D}}\right).
\end{align}
The value $L_K(1,\chi)$ is the natural-order convergent Dirichlet series.
\end{proposition}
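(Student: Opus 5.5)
The plan is to repeat the Gaussian computation of \S\ref{sec:fields_examples} once for each ideal class $C$, and then sum over classes. The change-of-variables scaling is the one in Remark \ref{rem:constant-polynomial}, and Lemma \ref{prop:eig} replaces the mod-$3$ identity \eqref{ex:gaussian-finite}.

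\textbf{Weighted Poisson formula for one class.} Fix $C$ and write $Q=Q_C$. Apply Theorem \ref{thm:gen_APSF} in dimension two with period $H=q$, weight $W(\vv)=\chi(Q(\vv))$, a single coefficient $b=\mathbf 1_{d=1}$, and the test function $f(\yy)=G(Q(\yy))$. Here $G(t)=(t^2+x^2)^{-1}$ for the first identity, and $G(t)=t/(t^2+x^2)^2$ for the auxiliary integrable version. By Lemma \ref{prop:eig}, the finite transform is $c_\chi\bar\chi(Q(-v,u))$, so
$$
 \sum_{\vv\in\Z^2}\chi(Q(\vv))G(Q(\vv))=\frac{c_\chi}{q^2}\sum_{(u,v)\in\Z^2}\bar\chi(Q(-v,u))\,\widehat f(u/q,v/q).
$$
To compute $\widehat f$, write $Q(\yy)=\yy^{\mathsf T}M\yy$ with $M=\left(\begin{smallmatrix}a&b/2\\b/2&c\end{smallmatrix}\right)$, so that $\det M=D/4$. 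The linear substitution $\yy=M^{-1/2}\zz$ gives
$$
\widehat f(\xxi)=(\det M)^{-1/2}\,\widehat{G(|\cdot|^2)}\bigl((\xxi^{\mathsf T}M^{-1}\xxi)^{1/2}\bigr),
\qquad \xxi^{\mathsf T}M^{-1}\xxi=\frac4D\,Q(-\xi_2,\xi_1).
$$
Evaluated at $\xxi=(u,v)/q$, the radial argument becomes $2q^{-1}\sqrt{Q(-v,u)/D}$. Inserting \eqref{ex:radial-pairs} produces $\kei\bigl(\frac{4\pi}{q}\sqrt{xQ(-v,u)/D}\bigr)$ with total prefactor $-\frac{2\pi}{x}\cdot\frac{2}{\sqrt D}\cdot\frac{c_\chi}{q^2}$.

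\textbf{No boundary terms, then sum over classes.} The lattice origin contributes nothing because $\chi(0)=0$, and the zero frequency contributes nothing because Lemma \ref{prop:eig} gives vanishing at $(u,v)=(0,0)$. So, unlike \eqref{ex:gaussian-untwisted}, no elementary terms appear. Since $(u,v)\mapsto(-v,u)$ is a bijection of $\Z^2$, both sides can be grouped by the value $n$ of the form, giving $\sum_n r_C(n)(\cdots)$. Summing over all classes $C$ and using $\sum_C r_C(n)=w_KF(n)$ turns the two sides into $w_K\sum\chi(n)F(n)/(n^2+x^2)$ and $w_K\cdot(-4\pi c_\chi/(q^2\sqrt D\,x))\sum\bar\chi(n)F(n)\kei(\cdots)$. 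Dividing by $w_K$ gives the first line of \eqref{eq:Bessel-resolvents}. Poisson admissibility holds as in the Gaussian case, since $f$ is smooth with integrable derivatives.

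\textbf{Second identity and the main obstacle.} For the second line, I would use $G(t)=t/(t^2+x^2)^2$ together with \eqref{eq:ker-integrable-transform}. Exactly as above, this gives $\sum\chi(n)F(n)n/(n^2+x^2)^2$ as $-\frac{2\pi c_\chi}{q^2\sqrt D\,x}\frac{d}{dx}\sum\bar\chi(n)F(n)\ker(\cdots)$. Next, replace $x$ by $t$, multiply by $2t$, and integrate over $t\in[x,\infty)$; the Kelvin series decays exponentially. Finally, subtract the result from $L_K(1,\chi)$ using $1/n-n/(n^2+x^2)=x^2/[n(n^2+x^2)]$.

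The main obstacle is justifying this integration and the conditional convergence of $L_K(1,\chi)$. My first approach would be the block argument from the Gaussian case. The vanishing of the eigenrelation at the zero frequency says exactly that $\chi(Q_C)$ sums to zero over every complete $q\times q$ residue block. Only $O(\sqrt u)$ blocks meet the boundary of the ellipse $Q_C\le u$, so $\sum_{n\le u}\chi(n)F(n)=O_{q,D}(\sqrt u)$. Summation by parts then yields convergence of $L_K(1,\chi)$ in natural order, together with the uniform bound $\ll t^{-5/2}$ on the truncated sums. That bound lets dominated convergence interchange $\int_x^\infty$ with $M\to\infty$. Continuity from $\Re s>1$ identifies the value with $L(1,\chi)L(1,\chi\chi_{-D})$.
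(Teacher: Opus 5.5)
Your proposal is correct and is essentially the paper's own proof. It uses the same weighted Poisson step for each class, with Lemma \ref{prop:eig} supplying $c_\chi\bar\chi(Q(-v,u))$, and the same Jacobian factor $2/\sqrt D$ with dual form $\frac4D Q(-\xi_2,\xi_1)$. The origin and the zero frequency vanish, the classes are summed via $\sum_C r_C(n)=w_KF(n)$, and the $2t\,dt$ integration is justified by the same $O(\sqrt u)$ block bound and uniform $t^{-5/2}$ estimate.

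The differences are cosmetic. You invoke Theorem \ref{thm:gen_APSF} directly with the $q$-periodic weight $\chi(Q(\vv))$, where the paper applies the constant-polynomial case of Theorem \ref{thm:polynomial-T1} on each residue class and sums. You also use $M^{-1/2}$ instead of a factorization $Q(\uu)=|A\uu|^2$. Both routes give the identical formula.
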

\begin{proof}
Write $Q(\uu)=|A\uu|^2$, so $|\det A|=\sqrt D/2$ and
$|A^{-T}\mm|^2=4Q(-m_2,m_1)/D$. For each residue $\aaa\bmod q$, apply Theorem \ref{thm:polynomial-T1} in its constant-polynomial specialization $P=1$, $c(t)=t$, $b(j)=\mathbf1_{j=1}$, $h(j)=j$, $\omega(j)=1$, with $g(\xx)=f(\aaa+q\xx)$, first for $f(\uu)=(Q(\uu)^2+x^2)^{-1}$ and then for $f(\uu)=Q(\uu)/(Q(\uu)^2+x^2)^2$. Multiplying by $\chi(Q(\aaa))$ and summing the finitely many residues gives explicitly
$$
 \sum_{\vv\in\Z^2}\chi(Q(\vv))f(\vv)
 =q^{-2}\sum_{\mm\in\Z^2}\mathcal F_q(\chi\circ Q)(\mm)\widehat f(\mm/q).
$$
The change of variables $\yy=A\uu$ supplies the factor $2/\sqrt D$; Lemma \ref{prop:eig} supplies $c_\chi\bar\chi(Q(-m_2,m_1))$. The weighted zero frequencies vanish. Sum over all classes, divide by $w_K$, and use rotation invariance of the representation numbers. With \eqref{ex:radial-pairs}, this proves the first resolvent. With \eqref{eq:ker-integrable-transform}, it gives
$$
 \sum_{n\ge1}\frac{\chi(n)F(n)n}{(n^2+x^2)^2}
 =-\frac{2\pi c_\chi}{q^2\sqrt D\,x}\frac{d}{dx}
 \sum_{n\ge1}\bar\chi(n)F(n)
 \ker\left(\frac{4\pi}q\sqrt{\frac{xn}{D}}\right).
$$
Complete residue blocks cancel by \eqref{eq:norm-finite-eigen}; the boundary of $Q(\uu)\le u$ meets $O_{Q,q}(\sqrt u)$ blocks. Consequently $\sum_{n\le u}\chi(n)F(n)=O(\sqrt u)$. The summation-by-parts estimate used after \eqref{ex:gaussian-resolvents} now applies with $A_G(n)$ replaced by $\chi(n)F(n)$. In the last identity, replace $x$ by $t$, multiply by $2t$, and integrate over $x\le t<\infty$ to obtain
$$
 \sum_{n\ge1}\frac{\chi(n)F(n)n}{n^2+x^2}
 =\frac{4\pi c_\chi}{q^2\sqrt D}\sum_{n\ge1}\bar\chi(n)F(n)
 \ker\left(\frac{4\pi}q\sqrt{\frac{xn}{D}}\right).
$$
Subtracting this from the naturally ordered series for $L_K(1,\chi)$ proves the second resolvent. Its $L_K(1,\chi)$ term comes from $1/n-n/(n^2+x^2)=x^2/[n(n^2+x^2)]$.
\end{proof}

For an integer $t$ put $\sigma^{\bar\chi}_{K,t}(M)=\sum_{d\mid M}\bar\chi(d)F(d)d^t$. Empty sums in the next theorem are zero.
\begin{theorem}[Twisted Lambert--Bessel reciprocity]\label{thm:tw}
Let $q,\chi$ be as in Lemma \ref{prop:eig}. For integers $k,m\ge1$ and $a>0$,
\begin{align}\label{eq:Bessel-odd-law}
 L_K(2k+1,\chi)+2\sum_{n\ge1}\frac{\chi(n)F(n)n^{-2k-1}}{\exp(2\pi an)-1}
 ={}&\frac{L_K(2k+2,\chi)}{\pi a}
 +\frac{2a}\pi\sum_{j=0}^{k-1}(-a^2)^j\zeta(2j+2)L_K(2k-2j,\chi)\notag\\
 &-\frac{8c_\chi(-a^2)^k}{q^2\sqrt D}
 \sum_{M\ge1}\frac{\sigma^{\bar\chi}_{K,2k+1}(M)}{M^{2k+1}}
 \kei\left(\frac{4\pi}q\sqrt{\frac M{aD}}\right),
\end{align}
\begin{align}\label{eq:Bessel-even-law}
 L_K(2m,\chi)+2\sum_{n\ge1}\frac{\chi(n)F(n)n^{-2m}}{\exp(2\pi an)-1}
 ={}&\frac{L_K(2m+1,\chi)}{\pi a}
 +\frac{2a}\pi\sum_{j=0}^{m-2}(-a^2)^j\zeta(2j+2)L_K(2m-2j-1,\chi)\notag\\
 &+\frac{2a}\pi(-a^2)^{m-1}\zeta(2m)L_K(1,\chi)\notag\\
 &-\frac{8ac_\chi(-a^2)^{m-1}}{q^2\sqrt D}
 \sum_{M\ge1}\frac{\sigma^{\bar\chi}_{K,2m}(M)}{M^{2m}}
 \ker\left(\frac{4\pi}q\sqrt{\frac M{aD}}\right).
\end{align}
\end{theorem}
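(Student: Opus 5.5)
The plan is to repeat the Gaussian Lambert computation (\eqref{ex:gaussian-lambert-odd}, \eqref{ex:gaussian-lambert-even}) with the coefficients $A(n)=\chi(n)F(n)$. The two resolvents of Proposition \ref{prop:twch} replace those of \eqref{ex:gaussian-resolvents}.

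First I will apply the scalar partial-fraction identity coming from Theorem \ref{thm:T1} and \eqref{eq:mittag-leffler}:
$$
 1+\frac2{e^{2\pi an}-1}=\frac1{\pi an}+\frac{2an}\pi\sum_{r\ge1}\frac1{a^2n^2+r^2}.
$$
For the odd law, multiply by $A(n)n^{-2k-1}$ and sum over $n$. The $1/(\pi an)$ term gives $L_K(2k+2,\chi)/(\pi a)$. The remaining term is
$$
 \frac{2a}\pi\sum_{r\ge1}\sum_{n\ge1}\frac{A(n)n^{-2k}}{a^2n^2+r^2},
$$
and I will expand it by the finite division already used in Theorem \ref{thm:katayama}:
$$
 \frac{n^{-2k}}{a^2n^2+r^2}=\sum_{j=0}^{k-1}\frac{(-a^2)^jn^{-2k+2j}}{r^{2j+2}}+\frac{(-a^2)^k}{r^{2k}}\frac{1}{a^2n^2+r^2}.
$$
The finite part produces $\frac{2a}\pi\sum_{j}(-a^2)^j\zeta(2j+2)L_K(2k-2j,\chi)$. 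All exponents here are at least $2$, so these series converge absolutely because $|A(n)|\le\tau(n)$.

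The remainder equals $a^{-2}\sum_n A(n)/(n^2+(r/a)^2)$. To this I apply the first resolvent of Proposition \ref{prop:twch} with $x=r/a$. The argument of $\kei$ becomes $\frac{4\pi}q\sqrt{rn/(aD)}$. Setting $M=rn$ collects the weight $\bar\chi(n)F(n)r^{-2k-1}$ into $\sigma^{\bar\chi}_{K,2k+1}(M)M^{-2k-1}$, since $r^{-2k-1}=n^{2k+1}M^{-2k-1}$. Tracking the constants $\frac{2a}\pi\cdot(-a^2)^k\cdot a^{-2}\cdot\bigl(-\frac{4\pi c_\chi a}{q^2\sqrt D\,r}\bigr)$ together with the factor $r^{-2k}$ should give exactly the coefficient $-8c_\chi(-a^2)^k/(q^2\sqrt D)$.

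For the even law, I multiply instead by $A(n)n^{-2m}$. Division is carried out $m-1$ times, which yields the $j\le m-2$ terms. The last step uses
$$
 \frac{1}{n(a^2n^2+r^2)}=\frac1{r^2}\frac{(r/a)^2}{n(n^2+(r/a)^2)}
$$
and then the second resolvent. That resolvent splits into two pieces. Its $L_K(1,\chi)$ piece, summed against $r^{-2m}$, gives the separate $\zeta(2m)L_K(1,\chi)$ term. Its $\ker$ piece regroups by $M=rn$ into $\sigma^{\bar\chi}_{K,2m}$, again after checking the constants.

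The main obstacle is justifying the interchanges, since $L_K(1,\chi)$ and the resolvents converge only in natural order. The double sums over $(r,n)$ before the resolvent is applied converge absolutely, because of the factor $r^{-2}$ and $n^{-2}$-type decay with $|A(n)|\le\tau(n)$. This holds even in the case $m=1$, where the remainder carries a $1/n$ but is multiplied by $(r/a)^2/(n^2+(r/a)^2)$. After each resolvent is applied, every $r$-summand is a single completed identity. The Bessel double series converges absolutely thanks to the exponential decay of $\ker$ and $\kei$ in $\sqrt{rn}$, which legitimizes the regrouping by $M$. The one term needing care is the $\sum_r r^{-2m}L_K(1,\chi)$ contribution: it factors as $\zeta(2m)L_K(1,\chi)$ with $L_K(1,\chi)$ taken as the natural-order value supplied by Proposition \ref{prop:twch}.
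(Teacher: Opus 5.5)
Your proposal is correct and follows the paper's proof step for step. The shared steps are the per-$n$ $\coth$ partial-fraction identity (the paper's \eqref{eq:Lambert-rational}), the same finite divisions in $r$, the two resolvents of Proposition \ref{prop:twch} at $x=r/a$, and the regrouping $M=rn$; the constants $-8c_\chi(-a^2)^k/(q^2\sqrt D)$ and $-8ac_\chi(-a^2)^{m-1}/(q^2\sqrt D)$ come out exactly as you indicate. Your justification of the interchanges is also the paper's: absolute convergence of the rational remainders, exponential decay of the Kelvin double series, and the isolated natural-order $L_K(1,\chi)$ term.
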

\begin{proof}
For an absolutely summable sequence $b(n)$, the sinc-corrected APSF calculation used above gives
\begin{equation}\label{eq:Lambert-rational}
 \sum_nb(n)+2\sum_n\frac{b(n)}{\exp(2\pi an)-1}
 =\frac a\pi\sum_{n\ge1}\sum_{r\in\Z}\frac{nb(n)}{a^2n^2+r^2}.
\end{equation}
Take $b(n)=\chi(n)F(n)n^{-2k-1}$ or $\chi(n)F(n)n^{-2m}$. The zero frequency gives the first term of each formula. For positive $r$, divide as follows:
$$
 \frac{n^{-2k}}{a^2n^2+r^2}
 =\sum_{j=0}^{k-1}\frac{(-a^2)^jn^{-2k+2j}}{r^{2j+2}}
 +\frac{(-a^2)^k}{r^{2k}}\frac1{a^2n^2+r^2},
$$
$$
 \frac{n^{1-2m}}{a^2n^2+r^2}
 =\sum_{j=0}^{m-2}\frac{(-a^2)^jn^{1-2m+2j}}{r^{2j+2}}
 +\frac{(-a^2)^{m-1}}{r^{2m-2}}\frac{n^{-1}}{a^2n^2+r^2}.
$$
The finite terms produce the stated zeta products. Apply the first resolvent with $x=r/a$ to the first remainder, and the second resolvent to the second remainder. Their coefficients become, respectively, $r^{-2k-1}$ and $r^{-2m}$ times the displayed Bessel series. Set $M=rn$: the inner divisor coefficients are exactly $\sigma^{\bar\chi}_{K,2k+1}(M)/M^{2k+1}$ and $\sigma^{\bar\chi}_{K,2m}(M)/M^{2m}$. All original remainders and all Bessel double sums converge absolutely; the isolated $L_K(1,\chi)$ is interpreted by the proved summation-by-parts limit. This proves both laws for every class number, without assuming a functional equation.
\end{proof}

\begin{remark}
The untwisted laws are those of Banerjee--Kumar \cite{BK25}. Their zero-frequency and origin terms must be retained. For example the first unweighted resolvent is
$$
 \sum_{n\ge1}\frac{F(n)}{n^2+x^2}
 =\frac{\pi^2h_K}{w_K\sqrt D\,x}-\frac{h_K}{w_Kx^2}
 -\frac{4\pi}{\sqrt D\,x}\sum_{n\ge1}F(n)\kei\left(4\pi\sqrt{\frac{xn}{D}}\right).
$$
The first two terms are respectively the Fourier zero term and the subtracted lattice origin. Related functional-equation formulations are discussed in \cite{KT15,FrankeHigher,BansalMaji26}; our derivation identifies the finite transform and the two Fourier kernels that produce the twisted formulas.
\end{remark}

\begin{remark}\label{rem:rem13}
Our first approach used the modified Bessel function $K_\nu$ and led to a reciprocity with
arbitrary complex order $\nu$. The identity displayed below is an explicit
Gaussian-character specialization of the general Ramanujan--Guinand relation of Ribeiro
and Yakubovich \cite[Eq. (5.7), preprint version]{RibeiroYakubovich24}. We give an APSF
derivation that records the conductor, Gauss-sum factor, and the two origin corrections
directly. The Kelvin-function formulas above emerged later through exchanges with LLMs. We chose them as the main application because their relationship to earlier results could be documented more precisely. The broader Bessel family has many specializations, and we do not claim to have settled priority for all of them; the references below describe several classical instances. In what follows, $\nu$ is the order of the Bessel function, $t>0$ scales its argument, and $n,m$ are positive integers indexing norm values. Write $r_2(n)=\#\{(a,b)\in\mathbb Z^2:a^2+b^2=n\}$ and, for any sequence $u(n)$ of polynomial growth, put
$$
 \mathcal A_u(t,\nu)=\sum_{n,m\ge1}u(n)r_2(m)(m/n)^{\nu/2}K_\nu(2\pi t\sqrt{nm}),\qquad
 \mathcal B_u(t,\nu)=\sum_{n,m\ge1}r_2(n)u(m)(m/n)^{\nu/2}K_\nu(2\pi t\sqrt{nm}).
$$
For $t>0$ these sums are entire in $\nu$, by the exponential decay of $K_\nu$
uniformly on compact sets of orders. If $\chi$ is a nonprincipal character
modulo an odd prime $p$, set
$Z_\chi(s)=L(s,\chi)L(s,\chi\chi_{-4})$,
$u_\chi(n)=\chi(n)r_2(n)$, and
$c_{\bar\chi}=\chi_{-4}(p)\bar\chi(4)\tau(\bar\chi)^2$.
Then, for every $\nu\in\mathbb C$,
\begin{equation}\label{sec4:secondary-guinand}
 \mathcal A_{u_\chi}(\alpha,\nu)
 =\frac{2\Gamma(\nu+1)Z_\chi(\nu+1)}{\pi^{\nu+1}\alpha^{\nu+2}}
 -\frac{2\Gamma(\nu)Z_\chi(\nu)}{\pi^\nu\alpha^\nu}
 +\frac{\mathcal B_{u_{\bar\chi}}(1/(p\alpha),\nu)}
 {c_{\bar\chi}p^\nu\alpha^2}.
\end{equation}
All apparent singularities in the gamma products are interpreted by continuity.
To see the constants directly, start with real $\nu>1$ and the planar Fourier pair\footnote{\label{fn:matern}For positive real $\nu$ and $t$, the radial profile $h_t$ is a Mat\'ern kernel up to normalization: $h_t(\xx)/h_t(0)=2^{1-\nu}(2\pi t|\xx|)^\nu K_\nu(2\pi t|\xx|)/\Gamma(\nu)$. This is the standard Mat\'ern covariance function with length scale $\sqrt{2\nu}/(2\pi t)$; see \cite[Eq. (4.14)]{RasmussenWilliams06}.}
$$
 h_t(\xx)=|\xx|^\nu K_\nu(2\pi t|\xx|),\qquad
 \widehat h_t(\xxi)=\frac{\Gamma(\nu+1)t^\nu}
 {2\pi^{\nu+1}(t^2+|\xxi|^2)^{\nu+1}},\qquad
 h_t(0)=\frac{\Gamma(\nu)}{2\pi^\nu t^\nu}.
$$
It follows from the integral representation of $K_\nu$ and the Gaussian
transform \cite[\S10.32]{DLMF}. With
$R_\chi=\sum_{n,m\ge1}u_\chi(n)r_2(m)(\alpha^2n+m)^{-\nu-1}$,
apply Theorem \ref{thm:polynomial-T1} with $P=1$, $c(u)=u$, $b(j)=\mathbf1_{j=1}$, $h(j)=j$, $\omega(j)=1$, and $g(\yy)=h_{\alpha\sqrt n}(\yy)$ for each $n\ge1$. Subtract the lattice origin, multiply by $u_\chi(n)n^{-\nu/2}$, and sum over $n$. The Fourier zero frequency and the subtracted origin give the two displayed gamma terms; the remaining contribution is
$\Gamma(\nu+1)\alpha^\nu R_\chi/(2\pi^{\nu+1})$.
For the second evaluation, keep $P,c,b,h,\omega$ as above and take $g(\yy)=h_{\sqrt n/(p\alpha)}(\aaa+p\yy)$ for each residue vector $\aaa\bmod p$. Multiply by $\bar\chi(|\aaa|^2)$ and sum over the residues, as in the proof of Proposition \ref{prop:twch}. Multiplying the resulting identity by $r_2(n)n^{-\nu/2}$ and summing over $n\ge1$ gives
$\mathcal B_{u_{\bar\chi}}(1/(p\alpha),\nu)
=c_{\bar\chi}\Gamma(\nu+1)p^\nu\alpha^{\nu+2}R_\chi/(2\pi^{\nu+1})$.
Elimination proves \eqref{sec4:secondary-guinand} for every real $\nu>1$.
The Gaussian sum with weight $\chi(a^2+b^2)$ decays exponentially both at
infinity and, after the same residue-class application of Theorem \ref{thm:polynomial-T1}, at zero: its finite Fourier coefficient at zero
vanishes. Its Mellin integral therefore shows that $\Gamma(s)Z_\chi(s)$
is entire. The identity theorem now proves the formula for all $\nu$. This twisted Ramanujan--Guinand law is obtained from \cite[Eq. (5.7), preprint version]{RibeiroYakubovich24} by taking $r_1=r_2=1$, $s=\nu+1$, $x=\alpha$,
$$
\phi_1(s)=\psi_1(s)=4\pi^{-s}\zeta(s)L(s,\chi_{-4}),\qquad
\phi_2(s)=4\pi^{-s}Z_\chi(s),\qquad
\psi_2(s)=\frac{4c_\chi}{p^2}
\left(\frac\pi{p^2}\right)^{-s}Z_{\bar\chi}(s),
$$
where $c_\chi=\chi_{-4}(p)\chi(4)\tau(\chi)^2$. In these data,
$\phi_1(0)=-1$, $\operatorname{Res}_{s=1}\phi_1(s)=1$, and
$\phi_2(0)=\operatorname{Res}_{s=1}\phi_2(s)=0$; also $c_\chi c_{\bar\chi}=p^2$.
The general relationship between Bessel sums and Hecke functional equations is treated by
Chandrasekharan--Narasimhan \cite{CN61} and Berndt--Dixit--Gupta--Zaharescu
\cite{BDGZ22}.
\end{remark}
\begin{remark}[Other examples and extensions]\label{sec:general-field-reciprocity}
The elliptic curve $E:y^2=x^3-x$ has a complex-multiplication $L$-function described by a character of the Gaussian integers \cite[Chapter II]{SilvermanAdvanced}. We obtained a Bessel reciprocity example for this $L$-function through interactions with LLMs and checked its written calculations and numerical specializations. Those interactions also led us to a reciprocity between two periodically weighted Euclidean ideal lattices, including lattices from fields of unequal degrees. We checked the written theta and Mellin calculations and numerical examples, and defer a deeper study to another occasion. This concerns embedded ideal elements: in higher degree, passing to ideal norms requires averaging over the infinite unit group, as in the classical theory \cite{Neukirch99}. The constant-polynomial specialization of Theorem \ref{thm:polynomial-T1}, after a lattice change of variables, supplies the underlying Poisson identities; the full argument is reserved for that later study.
\end{remark}

\section{\texorpdfstring{$L$}{L}-functions, selectors, and Epstein series}\label{sec:L-functions}
The scalar kernels lead to functional equations, whereas an infinite power-free sieve first gives an error estimate and a half-plane of continuation. The final subsection relates vector divisibility to the Bessel expansions of Epstein zeta functions.
\subsection{The Riemann zeta function}
Use the data of Ramanujan's formula: $b(d)=d^{-2m-1}$, $h(d)=\omega(d)=d$, $g_t(x)=e^{-2\pi t|x|}$, and $c(x)=1-\sinc x$, with $m\ge1$. Write
\begin{equation}\label{eq:zeta-dual-kernel}
 A_m(t)=\sum_{n\ge1}\frac{n^{-2m-1}}{e^{2\pi tn}-1},\qquad
 D_m(t)=\frac{2t}{\pi}\sum_{d,r\ge1}\frac{d^{-2m}}{t^2d^2+r^2}.
\end{equation}
The scalar APSF and the partial-fraction expansion of $\coth$ give
\begin{equation}\label{eq:zeta-lorentz}
 2A_m(t)+\zeta(2m+1)=\frac{\zeta(2m+2)}{\pi t}+D_m(t).
\end{equation}
Direct Mellin integration in the respective convergence strips gives
\begin{align}
 2\int_0^\infty A_m(t)t^{s-1}dt
 &=\frac{2\Gamma(s)}{(2\pi)^s}\zeta(s)\zeta(s+2m+1),&&\Re s>1,\label{eq:zeta-mellin-left}\\
 \int_0^\infty D_m(t)t^{s-1}dt
 &=\sec(\pi s/2)\zeta(1-s)\zeta(s+2m+1),&&-1<\Re s<0.\label{eq:zeta-mellin-right}
\end{align}
The second calculation uses $\int_0^\infty t^s/(d^2t^2+r^2)\,dt=(\pi/2)d^{-s-1}r^{s-1}\sec(\pi s/2)$; absolute integration permits summation in that strip. To compare the two expressions, put $a=\zeta(2m+2)/\pi$, $b=\zeta(2m+1)$ and split at $1$:
$$
 M(s)=\int_0^1(2A_m(t)-a/t+b)t^{s-1}dt
 +\int_1^\infty2A_m(t)t^{s-1}dt+\frac{a}{s-1}-\frac{b}{s}.
$$
The first integral converges for $\Re s>-1$, since $D_m(t)=O(t)$ at zero; the second is entire. In $-1<\Re s<0$, integrating $-a/t+b$ on $(1,\infty)$ shows that $M(s)$ equals the Mellin transform of $D_m$. Cancelling the common factor therefore proves, initially in this strip and then meromorphically,
\begin{equation}\label{eq:zeta-fe}
 \frac{2\Gamma(s)}{(2\pi)^s}\zeta(s)=\sec(\pi s/2)\zeta(1-s).
\end{equation}
\begin{remark}[The Dirichlet beta function]\label{rem:beta-comment}
For $\beta(s)=L(s,\chi_{-4})$, take $b(d)=d^{-\lambda}$ with $\lambda>1$, $h=\omega=\mathrm{id}$, the same $g_t$, and $c(x)=\sin(\pi|x|/2)$. The arithmetic side of this APSF specialization can be written as
\begin{equation}\label{eq:beta-lhs}
 B_\lambda(t)=2\sum_{n\ge1}e^{-2\pi tn}\sum_{d\mid n}d^{-\lambda}\chi_{-4}(n/d)
 =\sum_{d\ge1}d^{-\lambda}\operatorname{sech}(2\pi td).
\end{equation}
For $C_\lambda(x)=\int_0^\infty B_\lambda(t)\cos(2\pi xt)dt$, use the cosine transform $\frac14\operatorname{sech}(\pi x/2)$ of $\operatorname{sech}(2\pi t)$ \cite{Titchmarsh48}. In $0<\Re s<1$, its Mellin integral is both
$$
 \frac{\Gamma(s)\Gamma(1-s)}\pi\cos(\pi s/2)\zeta(\lambda+1-s)\beta(1-s)
 \quad\text{and}\quad
 2^{s-1}\pi^{-s}\Gamma(s)\zeta(\lambda+1-s)\beta(s).
$$
Pair terms $4j+1$ and $4j+3$ before integrating; their rational-kernel integrals are $O_s((j+1)^{\Re s-2})$. This justifies the paired summation. Cancellation gives
\begin{equation}\label{eq:beta-fe}
 \beta(s)=2^{1-s}\pi^{s-1}\Gamma(1-s)\cos(\pi s/2)\beta(1-s),
\end{equation}
equivalent to the symmetry of $(\pi/4)^{-(s+1)/2}\Gamma((s+1)/2)\beta(s)$ under $s\mapsto1-s$; compare \cite[\S25.15(i)]{DLMF}.
\end{remark}
\subsection{The Estermann functional equation}
The divisor function leads to a useful test of the scalar formula: can its rational Fourier kernels recover the functional equation of an additively twisted Dirichlet series? Let $q\ge2$, $(p,q)=1$, and write
$$
 D(s,p/q)=\sum_{n\ge1}\frac{\tau(n)\ee{pn/q}}{n^s},\qquad \Re s>1.
$$
This is Estermann's function \cite{Estermann30}. The transformation below exchanges $p$ with its inverse modulo $q$. We retain the analytic step that makes this exchange legitimate: a divergent resonant subseries must be regularized before taking a Mellin transform.

\begin{theorem}[Estermann functional equation from scalar APSF]
Let $p\bar p\equiv1\pmod q$. Meromorphically for every $s$,
\begin{equation}\label{eq:Estermann-FE}
 \begin{aligned}
 D(s,p/q)={}&2q^{1-2s}(2\pi)^{2s-2}\Gamma(1-s)^2\\
 &\times\{D(1-s,\bar p/q)-\cos(\pi s)D(1-s,-\bar p/q)\}.
 \end{aligned}
\end{equation}
Its only pole is the double pole at $s=1$, with principal part
\begin{equation}\label{eq:Estermann-poles}
 \frac1{q(s-1)^2}+\frac{2\gamma-2\log q}{q(s-1)}.
\end{equation}
Here $\gamma$ denotes Euler's constant. The normalization agrees with \cite[Lemma 6, equation (3-1), with both shifts zero]{Bettin19}.
\end{theorem}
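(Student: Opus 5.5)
Following the pattern used for $\zeta$ in \eqref{eq:zeta-fe}, we build a Lambert-type generating function for $\tau(n)\ee{pn/q}$ via the scalar APSF, identify its Fourier side as a rational double sum, and compare two Mellin transforms.

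\textbf{Step 1: the Lambert-type identity.} Write $\tau=\mathbf 1*\mathbf 1$ and group the twist modulo $q$ by residues. In Theorem \ref{thm:T1} take $h(d)=\omega(d)=d$ and $b(d)=\ee{pd\,j/q}$ summed over the residue classes $d\equiv j$, so that the additive character splits as $\ee{pdm/q}$. Take $c(x)=1-\sinc x$ and $g_t(x)=e^{-2\pi t|x|}$, with the origin handled as in Remark \ref{rem:zero-term}. The arithmetic side is then
\[
2\sum_{n\ge1}\tau(n)\ee{pn/q}e^{-2\pi tn}=2\sum_{d\ge1}\frac{\ee{pd/q}}{e^{2\pi t d}-1}\text{-type sums}.
\]
By \eqref{eq:mittag-leffler}, the Fourier side becomes
\[
\frac{t}{\pi}\sum_{d\ge1}\sum_{r\in\Z}\frac{\ee{pd/q}}{t^2d^2+(r+\cdots)^2}.
\]
It is cleaner to fix $d\bmod q$, i.e.\ $d=qk+a$, so that the weight $\ee{pdm/q}$ has period $q$ in $m$. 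Theorem \ref{thm:gen_APSF} with $H=q$ then shifts the frequencies to $r+pa/q$, giving kernels
\[
\frac{t d}{\pi}\,\frac{1}{t^2d^2+(r-pa/q)^2}.
\]
The zero-type frequencies now occur only when $q\mid pa$, i.e.\ $q\mid a$. These are the resonant terms responsible for the double pole.

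\textbf{Step 2: the two Mellin transforms.} On the arithmetic side the transform equals $2(2\pi)^{-s}\Gamma(s)D(s,p/q)$ for $\Re s>1$. On the Fourier side, use
\[
\int_0^\infty \frac{t^{s}}{d^2t^2+\rho^2}\,dt=\frac{\pi}{2}\,d^{-s-1}\rho^{s-1}\sec\frac{\pi s}{2}
\]
with $\rho=|r-pa/q|$. The resulting sum over $(k,a,r)$ is recombined as follows. The variable $qr-pa$ ranges over integers $\ell\not\equiv0$ with $a\equiv-\bar p\ell\pmod q$. Summing over $d$ in a fixed class is a Hurwitz-type sum, and summing over $\ell$ in a fixed class is another. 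Applying finite Fourier inversion (orthogonality of additive characters) to the pair of congruence conditions converts the product of two Hurwitz sums into $D(1-s,\pm\bar p/q)$, with the sign of $\ell$ producing the $\cos(\pi s)$ combination. The factor $q^{1-2s}$ comes from normalizing $\rho=\ell/q$. Collecting gamma factors via $\Gamma(s)\Gamma(1-s)\sec(\pi s/2)$-type reflection should produce exactly $2q^{1-2s}(2\pi)^{2s-2}\Gamma(1-s)^2$.

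\textbf{Step 3: regularization (the main obstacle).} The resonant subseries $q\mid a$, $r=0$, namely
\[
\frac{1}{\pi t}\sum_{k}\frac{1}{(qk)^{\cdots}},
\]
diverges, because with $b(d)$ of modulus one there is no convergent zeta value to absorb it. So before taking Mellin transforms I would subtract its regularized asymptotic expansion near $t=0$. This is
\[
\frac{\log(1/t)}{qt}+\frac{c_0}{qt}+\cdots,
\]
obtainable from the Euler--Maclaurin/digamma expansion of $\sum_k 1/(e^{2\pi tqk}-1)$. One then splits the Mellin integral at $t=1$ exactly as in the derivation of \eqref{eq:zeta-fe}. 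This identifies the common strip, which I expect to be $-1<\Re s<0$ after subtraction, and shows that the subtracted terms contribute $1/(q(s-1)^2)+(2\gamma-2\log q)/(q(s-1))$. The $\gamma$ comes from $\psi(1)$, and the $\log q$ from the scaling $qk$. Equating the two transforms in the strip and continuing meromorphically gives \eqref{eq:Estermann-FE}. Since the right side is holomorphic for $\Re s<1$ away from the poles of $\Gamma(1-s)^2$, and these are cancelled by the zeros of the bracket at integers $s\ge2$ (by evenness in the shift), the only pole is at $s=1$, with principal part \eqref{eq:Estermann-poles}.

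I expect the delicate points to be the justification of the conditionally convergent $r$-sums, handled by pairing $\pm\ell$ and bounded character sums as in Lemma \ref{prop:resolvent}, and getting the exact constant in the $1/(s-1)$ term.
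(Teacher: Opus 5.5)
There is a genuine gap at Step 2, and the regularization in Step 3 does not close it. By your own beta integral, the Mellin transform of the $d$-th Fourier block is $\tfrac12 d^{-s}\sec(\pi s/2)\sum_r|r-pa/q|^{s-1}$. The $r$-sum converges only for $\Re s<0$, whereas $\sum_{d\equiv a \bmod q}d^{-s}$ converges only for $\Re s>1$. So there is no strip in which the Fourier side can be transformed term by term.

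The obstruction is not confined to the resonant terms with $q\mid d$. The completed blocks are small only because their frequency sum cancels the origin correction. The nonresonant frequencies of a block, taken alone, sum to about $1$ when $td$ is large, so they are not summable in $d$ either. Subtracting the small-$t$ asymptotics of the resonant subseries leaves this untouched.

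The paper handles this with an auxiliary parameter. It takes $b(d)=d^{1-w}$ with $\Re w>2$, so the arithmetic side is $\sum_n n\sigma_{-w}(n)\ee{pn/q}e^{-2\pi tn}$. There the zero frequency is exactly $a_wt^{-2}$, and the majorant $\sum_d d^{-\Re w-\sigma}$ justifies the interchange for $-1<\sigma<0$. It then groups $d$ modulo $q$ into $\zeta(s+w,a/q)$ and sets $w=0$ using the finite identity $D_w(s,p/q)=q^{-2s-w}\sum_{a,b}\ee{pab/q}\zeta(s+w,a/q)\zeta(s,b/q)$. That identity also gives the global meromorphic continuation and the pole statement directly. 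Your argument via zeros of the bracket at integers $s\ge2$ is not needed there, since the series converges for $\Re s>1$, and it is not justified.

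Two further steps fail as stated.

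First, after the continuation one factor is $\zeta(s,a/q)$ with $\Re s<0$, still at argument $s$, while the target has both Dirichlet factors at $1-s$. Finite Fourier inversion cannot convert $s$ into $1-s$. You need the Hurwitz formula $\zeta(s,x)=G(s)\sum_{m\ne0}(im)^{s-1}\ee{mx}$ for $\Re s<0$, with $G(s)=(2\pi)^{s-1}\Gamma(1-s)$. This is also where the second factor $\Gamma(1-s)$ comes from; a single Mellin comparison of $\Gamma(s)$ against $\sec(\pi s/2)$ supplies only one.

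Second, your Step 1 kernel is even. With $g_t(x)=e^{-2\pi t|x|}$ and weight $\ee{pdm/q}$ summed over all $m\ne0$, the arithmetic side is $2\sum_n\tau(n)\cos(2\pi pn/q)e^{-2\pi tn}$, not $2\sum_n\tau(n)\ee{pn/q}e^{-2\pi tn}$. The Fourier side involves only $|r-pa/q|^{s-1}$, so everything is symmetric under $p\mapsto-p$. It therefore cannot produce the asymmetric combination $D(1-s,\bar p/q)-\cos(\pi s)D(1-s,-\bar p/q)$.

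The paper instead uses the one-sided data $c(x)=x_+$ and $g_t(x)=\ee{px/q}e^{-2\pi t|x|}$. Their transforms $[td-i(r+pd/q)]^{-2}$ are absolutely summable in $r$ and carry the phase $(-iv)^{s-1}$. Combined with the factor $(im)^{s-1}$ from the Hurwitz formula, the equal-sign quadrants give $2D(1-s,\bar p/q)$ and the opposite-sign quadrants give $-2\cos(\pi s)D(1-s,-\bar p/q)$.
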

\begin{proof}
Write $x_+=\max(x,0)$. For $t>0$, apply Theorem \ref{thm:T1} with
\begin{equation}\label{eq:T1data}
 h(d)=\omega(d)=d,\quad b(d)=d,\quad c(x)=x_+,\quad
 g_t(x)=\ee{px/q}e^{-2\pi t|x|}.
\end{equation}
The Fourier integral is $\int_0^\infty y e^{-2\pi[td-i(r+pd/q)]y}dy$, hence
\begin{equation}\label{eq:T1Est}
 A_{p,q}(t):=\sum_{n\ge1}n\tau(n)\ee{pn/q}e^{-2\pi tn}
 =\frac1{(2\pi)^2}\sum_{d\ge1}d
 \left(\sum_{r\in\Z}[td-i(r+pd/q)]^{-2}\right).
\end{equation}
Each inner sum converges absolutely and the completed $d$-blocks have an exponentially summable arithmetic majorant. But the terms $q\mid d$, $r=-pd/q$ alone would give $(2\pi t)^{-2}\sum_{q\mid d}d^{-1}$. They cannot be separated in this form.

Replace $b(d)$ by $d^{1-w}$, initially with $\Re w>2$, and denote the resulting arithmetic side by
$$
 A_w(t)=\sum_{n\ge1}n\sigma_{-w}(n)\ee{pn/q}e^{-2\pi tn}.
$$
Now the zero frequency is $a_wt^{-2}$, where $a_w=q^{-1-w}\zeta(1+w)/(2\pi)^2$. The remaining frequencies are jointly absolutely summable and bounded as $t\downarrow0$: their absolute sum is $O_{q,w}(\sum_d d^{1-\Re w})$. Put
$$
 D_w(s,p/q)=\sum_{n\ge1}\sigma_{-w}(n)\ee{pn/q}n^{-s}.
$$
Its Mellin integral, initially for $\Re s>1$, continues to $\Re s>-1$ as
\begin{align}\label{eq:reg-Mellin}
 \frac{\Gamma(s+1)}{(2\pi)^{s+1}}D_w(s,p/q)
 ={}&\int_0^1(A_w(t)-a_wt^{-2})t^sdt\notag\\
 &+\int_1^\infty A_w(t)t^sdt+\frac{a_w}{s-1}.
\end{align}
In $-1<\Re s<1$ this equals $\int_0^\infty(A_w(t)-a_wt^{-2})t^sdt$. Thus the pole has been retained, not discarded with the zero frequency.

Using principal complex powers, the beta integral gives
\begin{equation}\label{eq:beta-kernel}
 \int_0^\infty\frac{t^s}{(td-iv)^2}\,dt
 =d^{-s-1}\Gamma(s+1)\Gamma(1-s)(-iv)^{s-1},\qquad v\in\R\setminus\{0\}.
\end{equation}
For $-1<\sigma=\Re s<0$ the interchange with the nonzero frequencies follows from
\begin{equation}\label{eq:Mellin-majorant}
 \sum_{d\ge1}d^{1-\Re w}\sum_{r+pd/q\ne0}
 \int_0^\infty\frac{t^\sigma dt}{(td)^2+(r+pd/q)^2}
 \ll_{q,\sigma}\sum_{d\ge1}d^{-\Re w-\sigma}<\infty.
\end{equation}
Writing $G(s)=(2\pi)^{s-1}\Gamma(1-s)$, cancellation in \eqref{eq:reg-Mellin} therefore gives
\begin{equation}\label{eq:D-reg-dual}
 D_w(s,p/q)=G(s)\sum_{d\ge1}d^{-s-w}
 \sum_{r+pd/q\ne0}[-i(r+pd/q)]^{s-1}.
\end{equation}

To remove $w$, group $d$ modulo $q$. The outer sum in each class is $q^{-s-w}\zeta(s+w,a/q)$, where $\zeta(s,x)$ is the Hurwitz zeta function. The finite identity
\begin{equation}\label{eq:Hurwitz-product}
 D_w(s,p/q)=q^{-2s-w}\sum_{a,b=1}^q\ee{pab/q}
                  \zeta(s+w,a/q)\zeta(s,b/q)
\end{equation}
follows from the original divisor series and supplies joint meromorphic continuation. We may consequently set $w=0$ in the finite residue-class expression, obtaining
\begin{equation}\label{eq:one-reflection}
 D(s,p/q)=G(s)q^{-s}\sum_{a=1}^q\zeta(s,a/q)
       \sum_{r+pa/q\ne0}[-i(r+pa/q)]^{s-1}.
\end{equation}

The classical Hurwitz Fourier formula is
\begin{equation}\label{eq:Hurwitz-Fourier}
 \zeta(s,x)=G(s)\sum_{m\ne0}(im)^{s-1}\ee{mx},\qquad
 \Re s<0,\quad 0<x\le1;
\end{equation}
see \cite[Theorem 12.6]{Apostol76} or \cite[Eq. 25.11.9]{DLMF}. Its series is absolutely convergent, including the periodic endpoint. Insert it in \eqref{eq:one-reflection} and put $\ell=qr+pa$. The congruence $\ell\equiv pa\pmod q$ selects exactly $a\equiv\bar p\ell\pmod q$, so
\begin{equation}\label{eq:signed-double}
 D(s,p/q)=G(s)^2q^{1-2s}\sum_{m,\ell\ne0}
       (im)^{s-1}(-i\ell)^{s-1}\ee{\bar p m\ell/q}.
\end{equation}
For equal signs of $m,\ell$, the phase factors multiply to $1$; the two quadrants give $2D(1-s,\bar p/q)$. For opposite signs they give $-e^{i\pi s}$ and $-e^{-i\pi s}$, yielding $-2\cos(\pi s)D(1-s,-\bar p/q)$. This proves \eqref{eq:Estermann-FE} in $-1<\Re s<0$, and \eqref{eq:Hurwitz-product} continues it meromorphically.

Finally, each Hurwitz factor has only a simple pole at $1$. In \eqref{eq:Hurwitz-product} with $w=0$, orthogonality gives $\sum_{a,b}\ee{pab/q}=q$. The constant term of either Hurwitz factor contributes $q\gamma$ after summing the other residue, and differentiating $q^{-2s}$ contributes $-2\log q$. This proves \eqref{eq:Estermann-poles} and excludes other poles.
\end{proof}
The case $q=1$ is the same calculation with the trivial residue class, giving the functional equation of $D(s,0)=\zeta(s)^2$.

\subsection{Squarefree values and the comparison with Tolev}\label{sec:kfree}\label{sec:modular-bridge}
Which lattice points have a squarefree value of $x^2+y^2+1$? Tolev studies this question in a square \cite{Tolev12}. We first weight the same points by a Gaussian. Grouping them by $N=x^2+y^2$ gives $r_2(N)\mu^2(N+1)$: the number of representations is retained precisely when the shifted norm is squarefree. The associated Dirichlet series is therefore
\begin{equation}\label{eq:sf-series-definition}
 Z_{\rm sf}(s)=\sum_{N\ge1}\frac{r_2(N)\mu^2(N+1)}{N^s},\qquad \Re s>1.
\end{equation}
The Gaussian count and this series contain the same coefficients; their Mellin relation will turn a counting error into a region of continuation.

For $P(x,y)=x^2+y^2+1$, use $\mu^2(N)=\sum_{d^2\mid N}\mu(d)$ and put
\begin{align}
 \Theta_{\rm sf}(t)&=\sum_{\vv\in\Z^2}\mu^2(P(\vv))e^{-\pi t|\vv|^2}
 =1+\sum_{N\ge1}r_2(N)\mu^2(N+1)e^{-\pi tN}\label{eq:theta-sf}\\
 &=\frac1t\sum_{d\ge1}\frac{\mu(d)}{d^4}
 \left(\sum_{\mm\in\Z^2}\mathcal G_{P,d^2}(\mm)e^{-\pi|\mm|^2/(td^4)}\right).\label{eq:sf-dual}
\end{align}
The Fourier expression follows from Theorem \ref{thm:polynomial-T1} with $b=\mu$, $h(d)=d^2$, $\omega=1$, $c(u)=u$, and $g(\xx)=e^{-\pi t|\xx|^2}/P(\xx)$. The product $g(\xx)c(P(\xx))$ is Gaussian; the divisor bound $\tau(P(\vv))\ll_\epsilon P(\vv)^\epsilon$ justifies the completed-modulus order.

\begin{proposition}\label{b-prop:zero-mode}
The zero frequency contributes $C/t$, where
\begin{equation}\label{eq:Csf}
 C=\sum_{d\ge1}\frac{\mu(d)\mathcal G_{P,d^2}(0)}{d^4}
 =\prod_{p\ {\rm odd}}\left(1-p^{-2}+\chi_{-4}(p)p^{-3}\right)>0.
\end{equation}
\end{proposition}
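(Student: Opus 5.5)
By construction of \eqref{eq:sf-dual}, the zero-frequency contribution is the $\mm=0$ term in each completed $d$-block. That term is $t^{-1}\mu(d)d^{-4}\mathcal G_{P,d^2}(0)$, and the exponential factor equals $1$ there. By definition \eqref{eq:rho},
$$
\mathcal G_{P,q}(0)=\#C_q(P)=:\rho(q),
$$
the number of solutions of $x^2+y^2+1\equiv0\pmod q$. The plan is to compute $\rho(d^2)$ for squarefree $d$, check that $\sum_d|\mu(d)|\rho(d^2)d^{-4}$ converges (so isolating these terms is legitimate and $C$ is well defined), and then factor the series as an Euler product.

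\textbf{Multiplicativity.} By the Chinese remainder theorem, $\rho$ is multiplicative. Hence $d\mapsto\mu(d)\rho(d^2)d^{-4}$ is multiplicative and supported on squarefree $d$. Provided the series converges absolutely, it equals $\prod_p\bigl(1-\rho(p^2)p^{-4}\bigr)$.

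\textbf{The prime $p=2$.} Squares are $0$ or $1$ modulo $4$, so $x^2+y^2+1\in\{1,2,3\}\pmod4$. Thus $\rho(4)=0$ and the Euler factor at $2$ is $1$, which is why only odd primes appear in \eqref{eq:Csf}.

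\textbf{Odd primes, modulo $p$.} The classical count of solutions of $x^2+y^2=a$ in $\mathbb F_p$ with $a\ne0$ is $p-\chi_{-4}(p)$. One way to see it is to sum $\bigl(1+(\tfrac{x}{p})\bigr)\bigl(1+(\tfrac{y}{p})\bigr)$ over $x+y=a$ and use the Jacobi sum. Another is to parametrize the conic: it has $p-1$ affine points if $-1$ is a square and $p+1$ otherwise. Applying this with $a=-1$ gives $\rho(p)=p-\chi_{-4}(p)$.

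\textbf{Odd primes, lifting to $p^2$.} The gradient $(2x,2y)$ vanishes modulo $p$ only at $(0,0)$, which is not a solution since $1\not\equiv0$. So every solution modulo $p$ is nonsingular, and Hensel's lemma in two variables shows each one lifts to exactly $p$ solutions modulo $p^2$. Concretely, write $(x+pu,y+pv)$; the condition becomes one linear congruence in $(u,v)$ modulo $p$ with a nonzero coefficient vector, which has $p$ solutions. Hence $\rho(p^2)=p\bigl(p-\chi_{-4}(p)\bigr)$, and the local factor is
$$
1-\frac{p^2-\chi_{-4}(p)p}{p^4}=1-p^{-2}+\chi_{-4}(p)p^{-3}.
$$

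\textbf{Convergence and positivity.} For squarefree $d$, $\rho(d^2)\le d^2\prod_{p\mid d}(1+1/p)\ll d^{2+\epsilon}$. Therefore $\sum_d\rho(d^2)d^{-4}<\infty$, which gives absolute convergence of the series and of the Euler product, since $\sum_p p^{-2}<\infty$. Each factor is at least $1-p^{-2}-p^{-3}>0$, and an absolutely convergent product of positive factors is positive, so $C>0$.

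Finally, this absolute convergence justifies removing the $\mm=0$ terms from the completed blocks in \eqref{eq:sf-dual} and summing them separately over $d$, producing $C/t$. The only step needing real care is the lifting count $\rho(p^2)=p\rho(p)$, which rests on the nonsingularity observation.
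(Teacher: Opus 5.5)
Your proposal is correct and follows essentially the same route as the paper: you identify the zero mode with the solution count $\mathcal G_{P,d^2}(0)$, use Chinese-remainder multiplicativity, show that the factor at $2$ is trivial, prove $\mathcal G_{P,p}(0)=p-\chi_{-4}(p)$, lift nonsingular solutions to get $\mathcal G_{P,p^2}(0)=p\,\mathcal G_{P,p}(0)$, and conclude with absolute convergence and positivity of the Euler product. The differences are cosmetic: you count points modulo $p$ with a Jacobi sum or a conic count rather than the paper's factorization $(y-x)(y+x)=1$, and you prove convergence from the bound $\mathcal G_{P,d^2}(0)\ll d^{2+\epsilon}$ rather than by bounding the finite Euler products.
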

\begin{proof}
\textbf{1.} At frequency $\mm=0$, every exponential in the definition of $\mathcal G_{P,q}$ equals one. Thus $\mathcal G_{P,q}(0)$ is the number of pairs $(x,y)$ modulo $q$ satisfying $x^2+y^2+1\equiv0\pmod q$. The zero-frequency part of \eqref{eq:sf-dual} is therefore
$$
 \frac1t\sum_{d\ge1}\frac{\mu(d)\mathcal G_{P,d^2}(0)}{d^4}.
$$
We compute these counts at prime squares and then justify the Euler product.

\noindent\textbf{2.} Let $p$ be an odd prime, and write $\left(\frac{a}{p}\right)$ for the Legendre symbol, with value zero when $p\mid a$. For each $x$ modulo $p$, the number of solutions of $y^2\equiv-1-x^2\pmod p$ is $1+\left(\frac{-1-x^2}{p}\right)$. Consequently
$$
 \mathcal G_{P,p}(0)
 =p+\left(\frac{-1}{p}\right)
       \sum_{x\bmod p}\left(\frac{x^2+1}{p}\right).
$$
To evaluate the remaining sum, let us count the pairs $y^2\equiv x^2+1\pmod p$. Counting first in $y$ gives
$$
 \#\{(x,y)\bmod p:y^2=x^2+1\}
 =p+\sum_{x\bmod p}\left(\frac{x^2+1}{p}\right).
$$
On the other hand, this equation is $(y-x)(y+x)=1$. Each nonzero residue $u$ gives exactly one pair, determined by $y-x=u$ and $y+x=u^{-1}$, because $2$ is invertible modulo $p$. There are therefore $p-1$ pairs, and the character sum is $-1$. Since $\left(\frac{-1}{p}\right)=\chi_{-4}(p)$, we obtain
$$
 \mathcal G_{P,p}(0)=p-\chi_{-4}(p).
$$

\noindent\textbf{3.} Now, we lift each solution to modulus $p^2$. Choose integer representatives $x_0,y_0$ of a solution modulo $p$ and observe that its lifts are
$$
 x=x_0+pa,\qquad y=y_0+pb,\qquad a,b\bmod p.
$$
Expanding $x^2+y^2+1$ shows that a lift is a solution modulo $p^2$ precisely when
$$
 2x_0a+2y_0b\equiv-\frac{x_0^2+y_0^2+1}{p}\pmod p.
$$
The quotient on the right is an integer and since $x_0$ and $y_0$ cannot both vanish modulo $p$ (since their squared sum is $-1$), at least one coefficient on the left is therefore nonzero modulo $p$. We may choose the other variable freely, and the equation determines the remaining variable uniquely. Hence each solution has exactly $p$ lifts, giving
$$
 \mathcal G_{P,p^2}(0)=p\bigl(p-\chi_{-4}(p)\bigr),\qquad
 \frac{\mathcal G_{P,p^2}(0)}{p^4}
 =\frac1{p^2}-\frac{\chi_{-4}(p)}{p^3}.
$$
For $p=2$, a square is either $0$ or $1$ modulo $4$, so $x^2+y^2+1$ can only be $1$, $2$, or $3$ modulo $4$. Thus $\mathcal G_{P,4}(0)=0$, and the factor at $2$ is $1$.\\

\noindent\textbf{4.} Finally, for coprime positive integers $a,b$, the Chinese remainder theorem identifies a solution modulo $a^2b^2$ with a pair of solutions modulo $a^2$ and $b^2$. Therefore
$$
 \mathcal G_{P,(ab)^2}(0)
 =\mathcal G_{P,a^2}(0)\mathcal G_{P,b^2}(0).
$$
Only squarefree $d$ occur in the sum defining $C$, because $\mu(d)=0$ otherwise. To justify its absolute convergence, let $R\ge2$ be a real cutoff and expand the finite products
$$
 \prod_{\substack{p\le R\\p\ \mathrm{odd}}}
 \left(1+\frac{\mathcal G_{P,p^2}(0)}{p^4}\right)
 \le \exp\left(\sum_{p\ \mathrm{odd}}(p^{-2}+p^{-3})\right)<\infty.
$$
The expansion on the left is the sum of $|\mu(d)|\mathcal G_{P,d^2}(0)/d^4$ over squarefree $d$ with prime factors at most $R$; even $d$ contribute zero. Letting $R\to\infty$ proves absolute convergence. We can consequently insert the signs $\mu(d)$ and pass to the limit in the finite products, obtaining
$$
 C=\prod_{p\ \mathrm{odd}}
 \left(1-\frac{\mathcal G_{P,p^2}(0)}{p^4}\right)
 =\prod_{p\ \mathrm{odd}}
 \left(1-p^{-2}+\chi_{-4}(p)p^{-3}\right).
$$
Finally, every factor is at least $1-p^{-2}-p^{-3}\ge23/27>0$. The sum of their differences from $1$ converges absolutely, so the sum of their logarithms converges to a finite real number. The product is therefore strictly positive. This proves both the formula for $C$ and the asserted zero-frequency contribution $C/t$.
\end{proof}

\begin{theorem}\label{thm:sf-sharp}
For every $\epsilon>0$,
\begin{equation}\label{eq:sf-sharp-Gaussian}
 \Theta_{\rm sf}(t)=C/t+O_\epsilon(t^{-2/3-\epsilon})\qquad(0<t\le1).
\end{equation}
The series $Z_{\rm sf}$ in \eqref{eq:sf-series-definition} continues meromorphically to $\Re s>2/3$, with only a simple pole at $1$, of residue $\pi C$.
\end{theorem}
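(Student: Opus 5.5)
Split the sieve in \eqref{eq:sf-dual} at a parameter $z$ (to be optimized as a power of $t^{-1}$). Write
$$
 \Theta_{\rm sf}(t)=\sum_{d\le z}\mu(d)\sum_{\substack{\vv\in\Z^2\\d^2\mid P(\vv)}}e^{-\pi t|\vv|^2}
 +\sum_{d>z}\mu(d)\sum_{\substack{\vv\in\Z^2\\d^2\mid P(\vv)}}e^{-\pi t|\vv|^2}.
$$
For the small moduli $d\le z$ apply Lemma \ref{lem:coset-poisson} with $q=d^2$. The zero frequency gives $t^{-1}\mu(d)\mathcal G_{P,d^2}(0)/d^4$. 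The nonzero frequencies give
$$
 t^{-1}d^{-4}\sum_{\mm\ne0}\mathcal G_{P,d^2}(\mm)e^{-\pi|\mm|^2/(td^4)}.
$$
Bound these using $|\mathcal G_{P,d^2}(\mm)|\ll_\epsilon d^{1+\epsilon}$, a Gauss-sum/Kloosterman-type bound. This bound follows from multiplicativity via the CRT and the lifting argument of Proposition \ref{b-prop:zero-mode}: after completing the square, the sum modulo $p^2$ over the conic is a Ramanujan-type sum of size $\ll p$. The nonzero-frequency sum is then $\ll t^{-1}d^{-3+\epsilon}\cdot td^4=d^{1+\epsilon}$ when $td^4\ge1$, and exponentially small otherwise. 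Summed over $d\le z$ this contributes $O(z^{2+\epsilon})$, or smaller when $z\le t^{-1/4}$. The tail of the zero-frequency series, $\sum_{d>z}\mathcal G_{P,d^2}(0)/d^4\ll z^{-1+\epsilon}$, costs $O(t^{-1}z^{-1+\epsilon})$.

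For the large moduli $d>z$, work on the lattice side. Count $\vv$ with $|\vv|\ll t^{-1/2}\log(1/t)$ (the Gaussian tail is negligible) and $d^2\mid x^2+y^2+1$. For $d$ up to about $t^{-1/2}$, the count of solutions in a disc of radius $R\asymp t^{-1/2}$ is $\ll (R^2/d^4+1)\,d^{1+\epsilon}$, since there are $\mathcal G_{P,d^2}(0)\ll d^{2+\epsilon}$ residue solutions, and each lies in $\ll R^2/d^4+1$ translates. For very large $d$, use the Tolev/Chen--Wang device: write $x^2+y^2+1=d^2 k$ with $k\ll R^2/d^2$, and count by $k$ and the representations, giving $\ll R^{2+\epsilon}/d^2$ per dyadic range, i.e.\ $\ll R^{2+\epsilon}/z$ overall. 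Optimizing, the large-$d$ part is $\ll t^{-1-\epsilon}z^{-1}$ plus the $d^{1+\epsilon}$-type terms. Balancing $t^{-1}z^{-1}$ against $z^{2}$ gives $z=t^{-1/3}$ and the error $t^{-2/3-\epsilon}$. This balancing step, with its uniform bound on the number of solutions of $d^2\mid P(\vv)$ in a disc across the middle range of $d$, is the main obstacle. I would first try to handle it via the divisor bound on Gaussian integers dividing $N+1$, reproducing Tolev's $2/3$ exponent.

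Finally, $\Theta_{\rm sf}(t)-1=\sum_N r_2(N)\mu^2(N+1)e^{-\pi tN}$. The Mellin transform gives
$$
 \pi^{-s}\Gamma(s)Z_{\rm sf}(s)=\int_0^\infty(\Theta_{\rm sf}(t)-1)t^{s-1}\,dt .
$$
Split the integral at $t=1$. The piece over $t\ge1$ is entire, by exponential decay. On $(0,1]$, subtract $C/t$: the integral of $(\Theta_{\rm sf}-1-C/t)t^{s-1}$ converges for $\Re s>2/3$ by \eqref{eq:sf-sharp-Gaussian}. The subtracted terms contribute $C/(s-1)-1/s$. Dividing by $\pi^{-s}\Gamma(s)$, which is nonvanishing, and noting that the $1/s$ pole lies outside the half-plane, gives a meromorphic continuation to $\Re s>2/3$ with a single simple pole at $s=1$. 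Its residue is $\pi C/\Gamma(1)=\pi C$.
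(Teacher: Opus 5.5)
Your outline matches the paper's: expand $\mu^2$, split the moduli at $z=t^{-1/3}$, apply Lemma~\ref{lem:coset-poisson} with $q=d^2$ for $d\le z$, return to the lattice for $d>z$, keep the density tail, and finish with the Mellin integral split at $t=1$. However, your exponential-sum input is false as stated. The bound $|\mathcal G_{P,d^2}(\mm)|\ll_\epsilon d^{1+\epsilon}$ cannot hold uniformly in $\mm\ne0$: $\mathcal G_{P,d^2}$ is $d^2$-periodic, so for $\mm\in d^2\Z^2\setminus\{0\}$ it equals $\mathcal G_{P,d^2}(0)=d^2\prod_{p\mid d}(1-\chi_{-4}(p)/p)\gg d^{2-\epsilon}$. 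In general its size grows with $\gcd(d,m_1,m_2)$. This is why the paper uses Tolev's bound $16\tau(q)^2q^{1/2}\gcd(q,m_1,m_2)^{1/2}$ and then writes $\mm=\delta\zz$. The nonzero frequencies divisible by $\delta$ carry Gaussian mass $\ll td^4/\delta^2$, which absorbs the factor $\delta^{1/2}$, so the nonzero-frequency contribution is still $\ll d^{1+\epsilon}$.

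Your lifting idea works once it is put in this gcd-sensitive form, and it then becomes fully elementary. For odd $p\nmid\mm$, sum $\en{\mm\cdot(\uu_0+p\ww)/p^2}$ over the lifting line of a conic point $\uu_0$ modulo $p$. The sum vanishes unless $\uu_0\equiv\lambda\mm\pmod p$, which happens for at most two $\uu_0$, so $|\mathcal G_{P,p^2}(\mm)|\le2p$. Use the trivial bound $p(p+1)$ when $p\mid\mm$ and combine via the Chinese remainder theorem. This gives $|\mathcal G_{P,d^2}(\mm)|\le\tau(d)\,d\gcd(d,m_1,m_2)$ for odd squarefree $d$. Grouping $\mm=e\zz$ with $e\mid d$ again yields $d^{1+\epsilon}$, without Weil's bound. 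Even $d$ contribute nothing, since $4\nmid x^2+y^2+1$.

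Second, the ``middle range'' for $d>z$ must be discarded, not balanced. For every $d>t^{-1/3}$ one has $d^4>t^{-4/3}$, which exceeds $R^2\asymp t^{-1}\log^2(1/t)$ for small $t$. So your residue count is dominated by its $+1$ term, that is, by the number $\mathcal G_{P,d^2}(0)\gg d^{2-\epsilon}$ of admissible classes (for odd squarefree $d$), not by $d^{1+\epsilon}$. Summed over $z<d\le t^{-1/2}$ this is about $t^{-3/2}$. Even your $d^{1+\epsilon}$ would sum to about $t^{-1}$, the size of the main term.

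No such range is needed. Write $|\vv|^2+1=d^2\ell$ with $\ell\ge1$ and use $r_2(N)\ll_\epsilon N^\epsilon$. This gives $\sum_{d^2\mid P(\vv)}e^{-\pi t|\vv|^2}\ll_\epsilon d^{2\epsilon}(td^2)^{-1-\epsilon}$ uniformly for all $d\ge2$ and $0<t\le1$, hence a total of $\ll t^{-1-\epsilon}/z$ over all $d>z$ at once. This is the paper's step 7, so the obstacle you anticipate does not arise.

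With these two repairs, balancing $z^{2}$ against $t^{-1}/z$ and your Mellin argument go through exactly as in the paper. Just keep the subtractions consistent: integrating $\Theta_{\rm sf}-C/t$ and $C/t-1$ over $(0,1)$ is what produces $C/(s-1)-1/s$.
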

\begin{proof}
\textbf{1.} We start with the sum on the arithmetic side:
$$
 \Theta_{\rm sf}(t)
 =\sum_{\vv\in\Z^2}\mu^2(|\vv|^2+1)e^{-\pi t|\vv|^2}
 =1+\sum_{N\ge1}r_2(N)\mu^2(N+1)e^{-\pi tN}.
$$
The term $1$ comes from $\vv=(0,0)$, since $\mu^2(1)=1$. Since this term does not decay as $t\to\infty$, we must remove it before taking the Mellin transform that produces $Z_{\rm sf}$. Observe also that there is a second subtraction that we need to make near $t=0$: the Fourier zero frequency contributes $C/t$. So, our first step should be to estimate $\Theta_{\rm sf}(t)-C/t$ and then explain how these two subtractions give the continuation formula.\\

\noindent\textbf{2.} For each positive integer $d$, let
$$
 T_d(t)=\sum_{\substack{\vv\in\Z^2\\d^2\mid |\vv|^2+1}}
             e^{-\pi t|\vv|^2}.
$$
Thus $T_d(t)$ counts, with the same Gaussian weight, the points for which $d^2$ divides $|\vv|^2+1$. Expanding $\mu^2(|\vv|^2+1)$ gives
$$
 \Theta_{\rm sf}(t)=\sum_{d\ge1}\mu(d)T_d(t).
$$
This rearrangement is absolutely convergent for each $t>0$: the sum with $|\mu(d)|$ is bounded by
$$
 \sum_{\vv\in\Z^2}\tau(|\vv|^2+1)e^{-\pi t|\vv|^2}<\infty,
$$
where $\tau$ is the divisor-counting function.\\

\noindent\textbf{3.} For fixed $d$, apply Theorem \ref{thm:polynomial-T1} with
$$
 P(\xx)=|\xx|^2+1,\quad b(j)=\mathbf1_{j=d},\quad h(j)=j^2,
 \quad\omega(j)=1,\quad c(u)=u,\quad
 g(\xx)=\frac{e^{-\pi t|\xx|^2}}{P(\xx)};
$$
the product $g(\xx)c(P(\xx))$ is the Gaussian, whose Fourier transform is $t^{-1}e^{-\pi|\xxi|^2/t}$ and, consequently,
$$
 T_d(t)=\frac1{td^4}\sum_{\mm\in\Z^2}
 \mathcal G_{P,d^2}(\mm)e^{-\pi|\mm|^2/(td^4)}.
$$
In this sum the term $\mm=0$ is $V_d(t):=\mathcal G_{P,d^2}(0)/(td^4)$: it is the proportion $\mathcal G_{P,d^2}(0)/d^4$ of admissible residue classes multiplied by the Gaussian integral $1/t$. The preceding proposition gives $\sum_{d\ge1}\mu(d)V_d(t)=C/t$. Subtracting this term for each $d$ leaves exactly the nonzero Fourier frequencies:
$$
 T_d(t)-V_d(t)=\frac1{td^4}\sum_{\mm\ne0}
 \mathcal G_{P,d^2}(\mm)e^{-\pi|\mm|^2/(td^4)}.
$$

\noindent\textbf{4.} Fix $0<t\le1$ and introduce a real cutoff $Y\ge1$, to be chosen below. We now have the exact decomposition
\begin{align*}
 \Theta_{\rm sf}(t)-\frac Ct
 ={}&\sum_{d\le Y}\mu(d)\bigl(T_d(t)-V_d(t)\bigr)\\
 &+\sum_{d>Y}\mu(d)T_d(t)-\sum_{d>Y}\mu(d)V_d(t).
\end{align*}
We use the Fourier expression only for $d\le Y$. For larger $d$, the divisibility condition in the original sum gives a better estimate. The last sum subtracts the part of $C/t$ belonging to those larger divisors; it must therefore be retained. We estimate these three sums in order.\\

\noindent\textbf{5.} Let $0<\eta<1$ be a small number, to be chosen in terms of $\epsilon$ at the end. For odd $q$, the two-variable finite sum satisfies Tolev's estimate \cite[Lemma 3]{Tolev12}
$$
 |\mathcal G_{P,q}(m_1,m_2)|
 \le16\tau(q)^2q^{1/2}\gcd(q,m_1,m_2)^{1/2};
$$
Tolev proves this using quadratic Gauss sums and Weil's bound for Kloosterman sums. His Fourier sign is the opposite of ours, which does not change the absolute value. Since $\tau(q)^2\ll_\eta q^{\eta/2}$ and the greatest common divisor is one of the common divisors, this implies
\begin{equation}\label{eq:sf-gcd-bound}
 |\mathcal G_{P,q}(\mm)|\ll_\eta q^{1/2+\eta/2}
 \sum_{\substack{\delta\mid q\\\delta\mid m_1,\,\delta\mid m_2}}\delta^{1/2}.
\end{equation}
Only squarefree $d$ matter. If $d$ is even, the congruence is impossible modulo $4$, so both $T_d$ and $V_d$ vanish. For odd $d$ we use this bound with $q=d^2$; the case $d=1$ also satisfies it directly.

\noindent\textbf{6.} We will need the elementary Gaussian estimate
$$
 \sum_{\zz\in\Z^2\setminus\{0\}}e^{-\pi|\zz|^2/u}\ll u
 \qquad(u>0).
$$
For $u\ge1$, comparison with the one-dimensional Gaussian integral gives
$\sum_{n\in\Z}e^{-\pi n^2/u}\le1+\sqrt u$; squaring and removing the origin proves the claim. For $0<u\le1$, use $|\zz|^2\ge1$ to bound the sum by
$$
 e^{-\pi/(2u)}\sum_{\zz\ne0}e^{-\pi|\zz|^2/2}
 \ll e^{-\pi/(2u)}\ll u.
$$
Inserting the finite-sum bound and writing $\mm=\delta\zz$ now gives
\begin{align*}
 |T_d(t)-V_d(t)|
 &\ll_\eta\frac{d^{1+\eta}}{td^4}
   \sum_{\delta\mid d^2}\delta^{1/2}
   \sum_{\zz\ne0}e^{-\pi\delta^2|\zz|^2/(td^4)}\\
 &\ll_\eta\frac{d^{1+\eta}}{td^4}
   \sum_{\delta\mid d^2}\delta^{1/2}\frac{td^4}{\delta^2}
 \ll_\eta d^{1+\eta}.
\end{align*}
In the last step we used $\sum_{\delta\mid d^2}\delta^{-3/2}\le\zeta(3/2)$. Thus the first part of the decomposition is bounded by
$$
 \sum_{d\le Y}|\mu(d)|\,|T_d(t)-V_d(t)|
 \ll_\eta\sum_{d\le Y}d^{1+\eta}\ll_\eta Y^{2+\eta}.
$$

\noindent\textbf{7.} For the second part, return to the lattice points. If $d>Y\ge1$, then $d\ge2$ and the origin does not occur, since $d^2\nmid P(0)=1$. Write $|\vv|^2+1=d^2\ell$, where $\ell\ge1$. Grouping the points by their norm gives
$$
 T_d(t)=\sum_{\ell\ge1}r_2(d^2\ell-1)e^{-\pi t(d^2\ell-1)}.
$$
Jacobi's two-square formula gives $r_2(N)\le4\tau(N)\ll_\eta N^\eta$ for $N\ge1$. Also, $e^{\pi t}\le e^\pi$ in the range under consideration. Consequently
\begin{align}\label{eq:large-d}
 \sum_{d>Y}|\mu(d)|T_d(t)
 &\ll_\eta\sum_{d>Y}d^{2\eta}
       \sum_{\ell\ge1}\ell^\eta e^{-\pi td^2\ell}\notag\\
 &\ll_\eta\sum_{d>Y}d^{2\eta}(td^2)^{-1-\eta}\notag\\
 &\ll_\eta t^{-1-\eta}\sum_{d>Y}d^{-2}
 \ll_\eta\frac{t^{-1-\eta}}Y.
\end{align}
Here $\sum_{\ell\ge1}\ell^\eta e^{-u\ell}\ll_\eta u^{-1-\eta}$ for every $u>0$. For $u\le1$ this follows by integral comparison; for $u\ge1$, extract $e^{-u/2}$ and bound the remaining sum by $\sum_{\ell\ge1}\ell^\eta e^{-\ell/2}$.\\

\noindent\textbf{8.} For the third part, the local counts computed above give, for odd squarefree $d$,
$$
 \mathcal G_{P,d^2}(0)
 =d^2\prod_{p\mid d}\left(1-\frac{\chi_{-4}(p)}p\right)
 \le d^2\tau(d)\ll_\eta d^{2+\eta}.
$$
It follows that
\begin{equation}\label{eq:zero-tail}
 \sum_{d>Y}|\mu(d)|V_d(t)
 \ll_\eta\frac1t\sum_{d>Y}d^{-2+\eta}
 \ll_\eta\frac1tY^{-1+\eta}.
\end{equation}

\noindent\textbf{9.} Combining the three estimates yields
$$
 \left|\Theta_{\rm sf}(t)-\frac Ct\right|
 \ll_\eta Y^{2+\eta}+\frac{t^{-1-\eta}}Y
                         +t^{-1}Y^{-1+\eta}.
$$
Ignoring the arbitrarily small powers involving $\eta$, the competing terms are $Y^2$ and $t^{-1}/Y$. They are equal when $Y=t^{-1/3}$. With this choice, the displayed bound is $O_\eta(t^{-2/3-\eta})$. Taking $0<\eta<\min(1,\epsilon)$ proves the asserted estimate.

\noindent\textbf{10.} Finally, removing the lattice origin and integrating term by term, initially for $\Re s>1$, gives
\begin{align*}
 \int_0^\infty\bigl(\Theta_{\rm sf}(t)-1\bigr)t^{s-1}\,dt
 &=\sum_{N\ge1}r_2(N)\mu^2(N+1)
                 \int_0^\infty e^{-\pi tN}t^{s-1}\,dt\\
 &=\pi^{-s}\Gamma(s)Z_{\rm sf}(s).
\end{align*}
The subtraction of $1$ makes the integrand decay exponentially at infinity. To improve convergence near zero, split the integral at $1$ and write, on $(0,1)$,
$$
 \Theta_{\rm sf}(t)-1
 =\left(\Theta_{\rm sf}(t)-\frac Ct\right)+\frac Ct-1.
$$
The last two terms can be integrated explicitly:
$$
 \int_0^1\left(\frac Ct-1\right)t^{s-1}\,dt
 =\frac C{s-1}-\frac1s\qquad(\Re s>1).
$$
We therefore obtain
\begin{align}\label{eq:conditional-cont}
 \pi^{-s}\Gamma(s)Z_{\rm sf}(s)
 ={}&\int_1^\infty(\Theta_{\rm sf}(t)-1)t^{s-1}\,dt\notag\\
 &+\int_0^1\left(\Theta_{\rm sf}(t)-\frac Ct\right)t^{s-1}\,dt
 +\frac C{s-1}-\frac1s.
\end{align}
The first integral is entire because of exponential decay. For the second, the estimate just proved gives
$$
 \left|\left(\Theta_{\rm sf}(t)-\frac Ct\right)t^{s-1}\right|
 \ll_\epsilon t^{\Re s-5/3-\epsilon}\qquad(0<t\le1).
$$
This is integrable whenever $\Re s>2/3+\epsilon$. On any compact subset of $\Re s>2/3$, choose $\epsilon$ smaller than the distance to the boundary; the convergence is then uniform, and the integral is holomorphic there. Thus the displayed formula continues $\pi^{-s}\Gamma(s)Z_{\rm sf}(s)$ to $\Re s>2/3$, with its only pole in that region at $s=1$, of residue $C$. Multiplication by $\pi^s/\Gamma(s)$ gives the residue $\pi C$ for $Z_{\rm sf}$ and introduces no other poles.
\end{proof}
\begin{remark}[The estimates and their relation to Tolev]
The exponent $2/3$ uses cancellation in the finite sums \eqref{eq:sf-gcd-bound}. To see what this input contributes, replace it by the elementary bound
$$
 |\mathcal G_{P,d^2}(\mm)|\le\mathcal G_{P,d^2}(0)\ll_\epsilon d^{2+\epsilon}.
$$
In the preceding proof this gives $|T_d(t)-V_d(t)|\ll_\epsilon d^{2+\epsilon}$, so the contribution of $d\le Y$ is $O_\epsilon(Y^{3+\epsilon})$, rather than $O_\epsilon(Y^{2+\epsilon})$. The tails are unchanged. Balancing $Y^3$ with $t^{-1}/Y$ gives $Y=t^{-1/4}$ and the weaker error $O_\epsilon(t^{-3/4-\epsilon})$. Thus elementary residue counting already gives continuation to $\Re s>3/4$, while the exponential-sum estimate improves it to $\Re s>2/3$. It is worth noting that the same finite sums occur in Tolev's squarefree count. Tolev's $\lambda(q;a,b)=\mathcal G_{P,q}(-a,-b)=\mathcal G_{P,q}(a,b)$ by symmetry. Let $N_q(H_1,H_2)$ count the solutions of $q\mid P(x,y)$ in $1\le x\le H_1$, $1\le y\le H_2$, and put $S_J(\xi)=\sum_{j=1}^J\ee{j\xi}$. Finite inversion gives
\begin{equation}\label{eq:sf-finite-rectangle}
 N_q(H_1,H_2)=q^{-2}\sum_{a,b\bmod q}\mathcal G_{P,q}(a,b)
 S_{H_1}(a/q)S_{H_2}(b/q).
\end{equation}
For $H=\max(H_1,H_2)$, the same divisor split gives errors $O_\epsilon(Y^{2+\epsilon}+H^{1+\epsilon})$ and $O_\epsilon(H^{2+\epsilon}/Y)$; the frequency calculation is included in the general proof below. Taking $Y=H^{2/3}$ yields
\begin{equation}\label{eq:sf-rectangle-asymptotic}
 S(H_1,H_2):=\sum_{1\le x\le H_1,\,1\le y\le H_2}\mu^2(P(x,y))
 =CH_1H_2+O_\epsilon(H^{4/3+\epsilon}),
\end{equation}
whose specialization $H_1=H_2=H$ is Tolev's result \cite{Tolev12}. The exact transforms are reversible: for Schwartz $f$, finite inversion and the period-one case of Theorem \ref{thm:polynomial-T1} give
$$
 \sum_{q\mid P(\vv)}f(\vv)
 =q^{-2}\sum_{\mm\bmod q}\mathcal G_{P,q}(\mm)\sum_{\boldsymbol{\ell}\in\Z^2}\widehat f(\boldsymbol{\ell}+\mm/q)
 =q^{-2}\sum_{\mm\in\Z^2}\mathcal G_{P,q}(\mm)\widehat f(\mm/q).
$$
The Gaussian choice recovers \eqref{eq:sf-dual}; a smooth interpolant of rectangle samples recovers \eqref{eq:sf-finite-rectangle}. Extend $S(H_1,H_2)$ to nonnegative real endpoints by taking their integer parts. Two-variable partial summation then gives
$$
 \Theta_{\rm sf}(t)=4(2\pi t)^2\int_0^\infty\!\int_0^\infty
 H_1H_2 e^{-\pi t(H_1^2+H_2^2)}S(H_1,H_2)\,dH_1\,dH_2+O(t^{-1/2}),
$$
The term $O(t^{-1/2})$ accounts for the coordinate axes. The main term $CH_1H_2$ integrates to $C/t$. In the error term, the substitutions $H_i=t^{-1/2}u_i$ give $O_\epsilon(t^{-2/3-\epsilon})$. Thus the uniform rectangle estimate implies \eqref{eq:sf-sharp-Gaussian}. Reversibility of the exact finite transforms does not by itself reverse this implication between error estimates; a Gaussian asymptotic alone need not recover the sharp-square count.
\end{remark}

\begin{remark}[What the finite selectors retain]\label{rem:sf-finite-pieces}
The product for $C$ in \eqref{eq:Csf} multiplies local proportions of admissible residue classes. It is a product for the density, not for the Dirichlet series $Z_{\rm sf}$. Indeed, its normalized coefficients $r_2(N)\mu^2(N+1)/4$ take the values $1,2,0$ at $N=2,13,26$, although $2$ and $13$ are coprime. They are therefore not multiplicative. The useful connection with classical Dirichlet series comes instead from fixing one modulus at a time. Put
\begin{equation}\label{eq:sf-finite-pieces}
 E_{P,q}(s)=\sum_{\substack{\vv\in\Z^2\setminus\{0\}\\q\mid P(\vv)}}|\vv|^{-2s},
 \qquad Z_{\rm sf}(s)=\sum_{d\ge1}\mu(d)E_{P,d^2}(s),\quad \Re s>1.
\end{equation}
Thus the squarefree series is a M\"obius sum of congruence-restricted Epstein series. For odd $d$, the condition $d^2\mid N+1$ makes $N$ a unit modulo $d^2$. Character orthogonality and $r_2=4(\mathbf1*\chi_{-4})$ give the more familiar expression
\begin{equation}\label{eq:sf-L-pieces}
 Z_{\rm sf}(s)=4\sum_{\substack{d\ge1\\d\ {\rm odd}}}\frac{\mu(d)}{\varphi(d^2)}
 \sum_{\chi\bmod d^2}\bar\chi(-1)L(s,\chi)L(s,\chi\chi_{-4}).
\end{equation}
Here $\varphi$ is Euler's totient, and the full character sum is taken before the sum over $d$; the divisor bound justifies this order for $\Re s>1$. These formulas explain the link with classical $L$-functions without asserting multiplicativity of the shifted coefficients. A fixed modulus has an Epstein reflection with a transformed finite weight \cite{Epstein03}; the unbounded moduli in the M\"obius sum do not by themselves give a functional equation for $Z_{\rm sf}$.
\end{remark}

\subsection{Higher power-free values}\label{sec:kfree-general}
Keep the preceding argument and replace $d^2$ by $d^r$ and $x^2+y^2+1$ by $P(\vv)=|\vv|^2+k$, with $n,r\ge2$ and $k\ge1$. Here $r$ denotes the excluded power and $k$ the shift. Put $\mu_r(N)=\sum_{d^r\mid N}\mu(d)$ and $\Theta_{r,k,n}(t)=\sum_{\vv\in\Z^n}\mu_r(P(\vv))e^{-\pi t|\vv|^2}$. The same specialization, now with $h(d)=d^r$, gives
\begin{align}
 \Theta_{r,k,n}(t)&=t^{-n/2}\sum_{d\ge1}\frac{\mu(d)}{d^{rn}}
 \left(\sum_{\mm\in\Z^n}\mathcal G_{P,d^r}(\mm)e^{-\pi|\mm|^2/(td^{2r})}\right),\label{eq:powerfree-exact}\\
 C_{r,k,n}&=\prod_p\left(1-\frac{\mathcal G_{P,p^r}(0)}{p^{rn}}\right)>0.\label{eq:powerfree-density}
\end{align}
As before, the divisor bound justifies the grouped order. For $p\nmid2k$, nonsingular lifting gives $\mathcal G_{P,p^r}(0)\ll_n p^{r(n-1)}$, so the product converges absolutely. Every factor is positive since $P(0)$ and $P(\mathbf e_1)$ differ by $1$. The origin contributes $\mu_r(k)$.

Figure \ref{fig:powerfree-lifting} illustrates the local factor at $p=3$ for $P(x,y)=x^2+y^2+1$. Modulo $3$ there are four solutions $(\pm1,\pm1)$. A solution $(x,y)$ modulo $3^j$ lifts to $(x+3^ja,y+3^jb)$ modulo $3^{j+1}$ exactly when
$$
 \frac{P(x,y)}{3^j}+2xa+2yb\equiv0\pmod3.
$$
There are three choices among the nine pairs $(a,b)\bmod3$, since $x,y$ are nonzero modulo $3$. Hence there are twelve excluded classes modulo $9$ and thirty-six modulo $27$, giving local factors $1-12/81=1-4/27$ and $1-36/729=1-4/81$. A higher excluded power removes fewer residue classes: these conditions do not select a single dilated sublattice and the Euler product combines the local factors; controlling the infinite sieve requires the estimates below.
\begin{figure}[htbp]
 \centering
 \includegraphics[width=\linewidth]{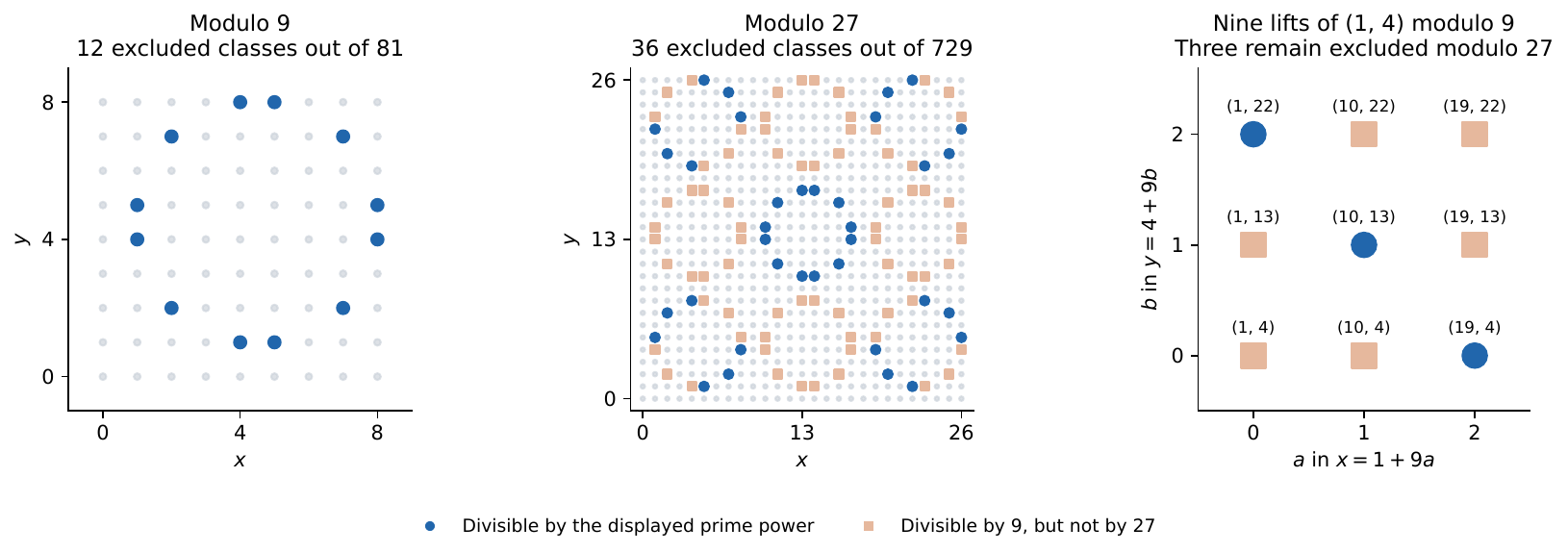}
 \caption{Local exclusions for $P(x,y)=x^2+y^2+1$. Blue circles satisfy $9\mid P$ in the first panel and $27\mid P$ in the second. Pale squares satisfy $9\mid P$ but not $27\mid P$. Each solution modulo $9$ has nine lifts modulo $27$, three of which remain excluded. For the nine lifts $(1+9a,4+9b)$ of $(1,4)$, shown on the right, these are exactly the pairs $a+b\equiv2\pmod3$. The panels depict local obstructions, not globally squarefree or cube-free values.}
 \label{fig:powerfree-lifting}
\end{figure}

\begin{theorem}\label{thm:kfree-general}
Let $\theta=n\{r(n-1)+2\}/[2r(n+1)]$. For fixed $n,r\ge2$, $k\ge1$ and every $\epsilon>0$,
\begin{align}
 \Theta_{r,k,n}(t)&=C_{r,k,n}t^{-n/2}+O_\epsilon(t^{-\theta-\epsilon})\quad(0<t\le1),\label{eq:kfree-theta-bound}\\
 \sum_{1\le v_i\le H_i}\mu_r(|\vv|^2+k)
 &=C_{r,k,n}\prod_iH_i+O_\epsilon(H^{n-1+\epsilon}+H^{2\theta+\epsilon}),\label{eq:kfree-box-bound}
\end{align}
where $H=\max_iH_i$ and the $H_i$ are positive integers. Moreover,
$Z_{r,k,n}(s)=\sum_{\vv\ne0}\mu_r(|\vv|^2+k)|\vv|^{-2s}$ continues to $\Re s>\theta$, with only a simple pole at $n/2$, of residue $\pi^{n/2}C_{r,k,n}/\Gamma(n/2)$.
\end{theorem}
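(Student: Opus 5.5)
I will follow the proof of Theorem \ref{thm:sf-sharp} step by step, with $d^2$ replaced by $d^r$, the plane by $\Z^n$, and the Gaussian transform now $t^{-n/2}e^{-\pi|\xxi|^2/t}$. Write $\Theta_{r,k,n}(t)=\sum_d\mu(d)T_d(t)$, where $T_d$ is the Gaussian sum over $\vv$ with $d^r\mid P(\vv)$. Theorem \ref{thm:polynomial-T1} with $h(d)=d^r$, $\omega=1$, $c(u)=u$ and $g=e^{-\pi t|\xx|^2}/P$ gives, for each $d$,
$$
 T_d(t)=t^{-n/2}d^{-rn}\sum_{\mm}\mathcal G_{P,d^r}(\mm)e^{-\pi|\mm|^2/(td^{2r})}.
$$
The zero mode $V_d=t^{-n/2}\mathcal G_{P,d^r}(0)d^{-rn}$ sums to $C_{r,k,n}t^{-n/2}$ by the Euler product \eqref{eq:powerfree-density}. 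Absolute convergence of that product comes from multiplicativity via CRT, Hensel lifting for $p\nmid2k$, and finitely many bounded bad primes. With a cutoff $Y$ I split the error as
$$
 \sum_{d\le Y}\mu(d)(T_d-V_d)\;+\;\sum_{d>Y}\mu(d)T_d\;-\;\sum_{d>Y}\mu(d)V_d .
$$

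The tails are elementary. Since $\mathcal G_{P,d^r}(0)\ll d^{r(n-1)+\epsilon}$, the $V$-tail is $\ll t^{-n/2}Y^{1-r+\epsilon}$. For the $T$-tail I group points by $|\vv|^2=d^r\ell-k$ and use $r_n(N)\ll N^{n/2-1+\epsilon}$ (for $n\ge3$; for $n=2$ the divisor bound). This gives $\sum_\ell (d^r\ell)^{n/2-1+\epsilon}e^{-\pi t d^r\ell}\ll t^{-n/2-\epsilon}d^{-r}$, so the tail is $\ll t^{-n/2-\epsilon}Y^{1-r}$, the same order as the $V$-tail. For $d\le Y$, dropping the zero frequency, I need a square-root-type bound for the complete sums
$$
 |\mathcal G_{P,q}(\mm)|\ll q^{n/2+1+\epsilon}\gcd(q,\mm)^{n/2}\quad\text{roughly}.
$$
To get it, I would expand $\mathbf 1_{q\mid P}=q^{-1}\sum_{a\bmod q}e(aP/q)$. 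Each $a$ then factors into $n$ one-variable quadratic Gauss sums of size $(q/\gcd(a,q))^{1/2}$ in each coordinate. Summing over $a$ gives (up to gcd factors) $q^{-1}\cdot q\cdot q^{n/2}$, and the $a$-sum is a Kloosterman/Ramanujan-type sum that I bound trivially. Inserting this bound, the dual Gaussian sum $\sum_{\zz\ne0}e^{-\pi\delta^2|\zz|^2/(td^{2r})}\ll (td^{2r}/\delta^2)^{n/2}$ cancels the prefactor as before. That leaves $|T_d-V_d|\ll d^{\,rn/2+\epsilon}$ times a convergent divisor sum, so the first part is $\ll Y^{rn/2+1+\epsilon}$.

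Balancing $Y^{rn/2+1}$ against $t^{-n/2}Y^{1-r}$ gives $Y^{r(n+2)/2}=t^{-n/2}$, and the resulting exponent should be $\theta=n\{r(n-1)+2\}/[2r(n+1)]$. This consistency check is where I expect the main obstacle: the exact exponent in the Gauss-sum bound, including whether a Weil-type saving in $a$ is needed and how the gcd weights interact with $d^r$ rather than squarefree moduli, must be tuned to reproduce $\theta$. Concretely, I would first try the bound $|\mathcal G_{P,q}(\mm)|\ll q^{(n+1)/2+\epsilon}\gcd^{1/2}$, which specializes to Tolev's bound when $n=2$. I would then verify that it yields $\theta$, so that $\theta=2/3$ at $n=r=2$. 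For $n=r=2$ the balance must be $Y^{r}=Y^2$ versus $t^{-1}Y^{-1}$, so the local part should be $Y^{1+r(n-1)/2}$ in general, and I would adjust the Gauss-sum estimate to give exactly that. The balance $Y^{1+r(n-1)/2}\cdot Y^{r-1}=t^{-n/2}$ gives $Y=t^{-n/(r(n+1))}$ and an error of $t^{-\theta}$, matching the statement. So the needed input is $|T_d-V_d|\ll d^{r(n-1)/2+\epsilon}$, i.e.\ $|\mathcal G_{P,d^r}(\mm)|\ll d^{r(n-1)/2+\epsilon}$ times gcd factors.

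The box count \eqref{eq:kfree-box-bound} follows the rectangle argument in the remark after Theorem \ref{thm:sf-sharp}. Finite inversion \eqref{eq:sf-finite-rectangle} in $n$ variables, with $S_{H_i}(a/q)\ll\min(H_i,\|a/q\|^{-1})$ and the same $\mathcal G$ bound, gives $O(Y^{1+r(n-1)/2+\epsilon}+H^{n-1+\epsilon})$ for $d\le Y$; the boundary term $H^{n-1}$ comes from the zero frequency in one coordinate. The tail is $O(H^{n+\epsilon}Y^{1-r})$, and the optimal $Y$ gives $H^{2\theta+\epsilon}$. Finally, the continuation follows exactly as in step 10 of that proof. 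I subtract the origin term $\mu_r(k)$ and the main term $C_{r,k,n}t^{-n/2}$ on $(0,1)$, and Mellin-transform against $t^{s-1}$. The bound $O(t^{-\theta-\epsilon})$ gives holomorphy for $\Re s>\theta$, and the only pole is at $s=n/2$, with residue $C_{r,k,n}$ for $\pi^{-s}\Gamma(s)Z_{r,k,n}$, hence $\pi^{n/2}C_{r,k,n}/\Gamma(n/2)$ for $Z_{r,k,n}$.
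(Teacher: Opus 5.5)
\textbf{There is a genuine gap.} Your skeleton matches the paper: the M\"obius split $\Theta_{r,k,n}=\sum_d\mu(d)T_d$, the zero modes summing to $C_{r,k,n}t^{-n/2}$, the two tails of size $t^{-n/2-\epsilon}Y^{1-r}$, the cutoff $Y=t^{-n/(r(n+1))}$, and the split Mellin integral. But the one estimate that determines $\theta$ is never proved. You infer from the target exponent that you need $|\mathcal G_{P,d^r}(\mm)|\ll d^{r(n-1)/2+\epsilon}$ times gcd factors, and then propose to ``adjust'' the Gauss-sum bound until it holds.

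The route you actually describe does not give this. Expanding $\mathbf 1_{q\mid P}$ additively, factoring into $n$ Gauss sums of size $\ell^{1/2}$, and bounding the sum over $a$ trivially gives only $|\mathcal G_{P,q}(\mm)|\ll q^{n/2}\sum_{\delta\mid(q,\mm)}\delta^{n/2-1}$. That yields $|T_d-V_d|\ll d^{rn/2}$ and the exponent $n(rn+2)/[2r(n+2)]$, which exceeds $\theta$ by $n(r-1)/[r(n+1)(n+2)]>0$. At $n=r=2$ this is $3/4$ instead of $2/3$, the elementary exponent from the remark after Theorem \ref{thm:sf-sharp}. Your fallback bound $q^{(n+1)/2+\epsilon}\gcd^{1/2}$ is not Tolev's: his is $q^{1/2}\gcd^{1/2}$ at $n=2$, i.e.\ exponent $(n-1)/2$. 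The bound $q^{(n+1)/2}$ is even weaker than the trivial-$a$ bound.

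\textbf{The missing idea is square-root cancellation in the sum over $a$.} First split off the part of $q$ supported on primes dividing $2k$. Because $q=d^r$ with $d$ squarefree, that part divides a fixed integer, and the CRT factorization $\mathcal G_{P,q_1q_2}(\mm)=\mathcal G_{P,q_1}(u_1\mm)\mathcal G_{P,q_2}(u_2\mm)$ with a trivial bound puts it into the constant. For the rest, group $a$ by $\delta=\gcd(a,q)$ and put $\ell=q/\delta$, which is odd and prime to $k$. Completing the square gives $\mathcal G_\ell(a,b)=\en{\overline{4a}\,b^2/\ell}\left(\frac a\ell\right)\varepsilon_\ell\sqrt\ell$, so
\[
 \mathcal G_{P,q}(\mm)=q^{-1}\sum_{\substack{\delta\mid q\\ \delta\mid\mm}}\delta^n\varepsilon_\ell^n\ell^{n/2}
 \sum^{*}_{a\bmod\ell}\left(\frac a\ell\right)^n\ee{(ka-\overline{4a}\,|\mm/\delta|^2)/\ell}.
\]
The inner sum is a Kloosterman sum for even $n$ and a Sali\'e sum for odd $n$. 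Since $k$ is a unit modulo $\ell$, Weil's bound at primes, the elementary prime-power bounds, and multiplicativity give $\ll\tau(\ell)\ell^{1/2}$ uniformly in $\mm$. This includes $\mm/\delta\equiv0$, where the sum degenerates to a Ramanujan or Gauss sum. This is exactly the missing factor $\ell^{1/2}$, and it gives \eqref{eq:rho-bound}: $|\mathcal G_{P,q}(\mm)|\ll q^{(n-1)/2+\epsilon}\sum_{\delta\mid(q,\mm)}\delta^{(n-1)/2}$.

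The box count needs the same input. Your sketch there also skips the frequencies with $j$ vanishing coordinates. As in \eqref{eq:box-gcd-bound}, these contribute $H^jq^{\epsilon}(q^{(n-1)/2-j}+q^{-1})$. They are absorbed into $O(H^{n-1+\epsilon}+Y^{r(n-1)/2+1+\epsilon})$ only because the chosen $Y=H^{2n/(r(n+1))}$ satisfies $Y^r\ge H$.
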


\begin{proof}
\textbf{1.} We record the changes to the squarefree proof. Put $\lambda=(n-1)/2$ and  for odd
$q$ prime to $k$, group the additive multiplier by $\delta=\gcd(a,q)$ and write
$\ell=q/\delta$:
\begin{equation}\label{eq:local-gcd}
\mathcal G_{P,q}(\mm)=q^{-1}\sum_{\substack{\delta\mid q\\\delta\mid \mm}}\delta^n
\sum_{a\bmod\ell}^{*}\ee{ka/\ell}
\prod_{i=1}^n\mathcal G_\ell(a,m_i/\delta),\qquad
\mathcal G_\ell(a,b)=\sum_{x\bmod\ell}\ee{(ax^2-bx)/\ell}.
\end{equation}
Here the star restricts $a$ to the units modulo $\ell$, and $\delta\mid\mm$ means that
$\delta$ divides every coordinate of $\mm$. The letter $a$ in this finite sum is a local
residue index; $t$ remains the positive Gaussian parameter.
Completing squares leaves Gauss factors of modulus $\ell^{n/2}$ and
a Kloosterman sum when $n$ is even, or its Jacobi-character twist when $n$ is odd. At prime modulus the latter is a Sali\'e sum. At modulus
$p^e$, the quadratic multiplier is trivial if $n$ or $e$ is even.
Since $k$ is a unit, the prime and prime-power
bounds, followed by the Chinese remainder theorem, give
$\tau(\ell)\sqrt\ell$; see \cite[Lemma 2]{Tolev12} and
\cite[Lemma 2.2]{ChenWang23}. The prime-modulus Kloosterman estimate
uses Weil’s bound \cite{Weil48,Iwaniec97}, whereas the prime-modulus Sali\'e evaluation is
elementary.
For coprime $q_1,q_2$, the finite coefficients factor as
$$
\mathcal G_{P,q_1q_2}(\mm)
=\mathcal G_{P,q_1}(u_1\mm)\mathcal G_{P,q_2}(u_2\mm),
$$
where $u_1q_2\equiv1\pmod{q_1}$ and $u_2q_1\equiv1\pmod{q_2}$. Thus the linear arguments
are rescaled by the unit inverses in the Chinese remainder theorem. Substitution gives
\begin{equation}\label{eq:rho-bound}
 |\mathcal G_{P,q}(\mm)|\ll_{n,r,k,\epsilon}q^{\lambda+\epsilon}
 \sum_{\substack{\delta\mid q\\\delta\mid \mm}}\delta^\lambda
 \qquad(q=d^r,\ \mu(d)\ne0).
\end{equation}
For these moduli the part supported on primes dividing $2k$ divides the fixed integer $\prod_{p\mid2k}p^r$; a trivial bound and the Chinese remainder theorem include it in the constant.

\noindent \textbf{2.} For every $u>0$, $\sum_{\zz\ne0}e^{-\pi|\zz|^2/u}\ll_n u^{n/2}$, by Gaussian integral comparison for $u\ge1$ and exponential decay for $u\le1$. Thus \eqref{eq:rho-bound}, with $\mm=\delta \zz$, gives
\begin{equation}\label{eq:weighted-Gaussian-bound}
 \sum_{\mm\ne0}|\mathcal G_{P,q}(\mm)|e^{-\pi|\mm|^2/u}
 \ll q^{\lambda+\epsilon}u^{n/2}\sum_{\delta\mid q}\delta^{-(n+1)/2}
 \ll q^{\lambda+\epsilon}u^{n/2}.
\end{equation}
At $u=td^{2r}$ this cancels the prefactor in \eqref{eq:powerfree-exact}, giving a small-modulus error $O(Y^{r\lambda+1+\epsilon})$.

\noindent\textbf{3.} The tail follows exactly as before: fixing $n-2$ coordinates and using $r_2(M)\ll_\epsilon M^\epsilon$ (and $r_2(0)=1$) gives $r_n(N)\ll_\epsilon N^{n/2-1+\epsilon}$. With $A=n/2-1+\epsilon$, write $|\vv|^2=d^r\ell-k$ to obtain
$$
 \sum_{d>Y}\sum_{\substack{\vv\ne0\\d^r\mid P(\vv)}}e^{-\pi t|\vv|^2}
 \ll\sum_{d>Y}d^{rA}\sum_{\ell\ge1}\ell^Ae^{-\pi td^r\ell}
 \ll t^{-n/2-\epsilon}Y^{1-r}.
$$
The density tail has the same bound up to $Y^\epsilon$; the origin adds $O_k(1)$. Balancing the two errors with $Y=t^{-n/(r(n+1))}$ proves the Gaussian estimate. The split Mellin formula \eqref{eq:conditional-cont}, with $1,C/t$ replaced by $\mu_r(k),C_{r,k,n}t^{-n/2}$, proves the continuation and residue.

\noindent\textbf{4.} For rectangles, replace the two factors in \eqref{eq:sf-finite-rectangle} by $\prod_iS_{H_i}(m_i/q)$ and $q^{-2}$ by $q^{-n}$. If exactly $j$ coordinates of $\mm\ne0$ vanish, the bound
$\sum_{0<|a|\le q/2,\,\delta\mid a}|S_{H_i}(a/q)|\ll(q/\delta)\log(2q)$ gives the contribution
\begin{equation}\label{eq:box-gcd-bound}
 \ll H^jq^{\lambda-j+\epsilon}\sum_{\delta\mid q}\delta^{j-(n+1)/2}
 \ll H^jq^\epsilon(q^{\lambda-j}+q^{-1}),\qquad 0\le j\le n-1.
\end{equation}
For the cutoff chosen below, $Y^r\ge H$. If $r(\lambda-j)+1>0$, then
$$
H^jY^{r(\lambda-j)+1}
=Y^{r\lambda+1}(H/Y^r)^j\le Y^{r\lambda+1}.
$$
If $r(\lambda-j)+1\le0$, summing over $d\le Y$ contributes at most $H^{n-1+\epsilon}$ and a
possible logarithm is absorbed into $\epsilon$; the summable $d^{-r}$ terms have the same
bound. Hence the small-modulus contribution is
$O(H^{n-1+\epsilon}+Y^{r\lambda+1+\epsilon})$ and
the representation bound gives the arithmetic tail $O(H^{n+\epsilon}Y^{1-r})$, by summing
$r_n(d^r\ell-k)$ for $\ell\ll_k H^2/d^r$; since the density tail is identical in size,  taking
$Y=H^{2n/(r(n+1))}$, which satisfies $Y^r\ge H$, proves the rectangle estimate.
\end{proof}

\begin{remark}
The parallel with the classical counts is summarized in
Table \ref{tab:powerfree-comparison}. For $n=3$ and $r=2$, Zhou--Ding \cite{ZhouDing22}
obtained an error $O_\epsilon(H^{7/3+\epsilon})$; Chen--Wang \cite{ChenWang23} improved
this to $O_\epsilon(H^{9/4+\epsilon})$ and treated higher excluded powers. These APSF derivations use the same exponential-sum estimates and do not improve the known exponents. More generally, Lapkova--Xiao \cite[Theorem 1.1]{LapkovaXiao19} establish the expected
density asymptotic for power-free values of square-free polynomials under their degree
condition. Their result applies to $P(\vv)=|\vv|^2+k$ for every $n,r\ge2$ and $k>0$: this
polynomial has degree two and is square-free, the condition on the excluded power is
$r\ge7/4$, and no prime power divides all its values because $P(0)=k$ and $P(\mathbf
e_1)=k+1$. Their qualitative asymptotic does not state the explicit power error
considered here. The present argument also supplies the Gaussian estimates and the
continuation of the selector Dirichlet series.
\end{remark}
\begin{table}[htbp]
\centering\small
\begin{tabularx}{\linewidth}{@{}l>{\raggedright\arraybackslash}X>{\raggedright\arraybackslash}X@{}}
\toprule
Setting & Box error (allowing $\epsilon$) & Comparison\\
\midrule
$n=r=2$, $k=1$ & $H^{4/3+\epsilon}$ & Tolev \cite{Tolev12}.\\[3pt]
$n=3$, $r\ge2$, $k>0$ & $H^{2+\epsilon}+H^{3/2+3/(2r)+\epsilon}$ & Chen--Wang \cite[Theorem 1.1(i)]{ChenWang23}; $r=2$ gives $H^{9/4+\epsilon}$, improving $H^{7/3+\epsilon}$ of Zhou--Ding \cite{ZhouDing22}.\\[3pt]
$n,r\ge2$, $k>0$ & $H^{n-1+\epsilon}+H^{2\theta+\epsilon}$ & Theorem \ref{thm:kfree-general}; no optimality claim.\\
$n\ge1$, square-free $F$, $\deg F\ge2$ & asymptotic only & Lapkova--Xiao \cite[Theorem 1.1]{LapkovaXiao19}, for $r\ge(3\deg F+1)/4$ and with no fixed prime $r$th-power divisor.\\
\bottomrule
\end{tabularx}
\vspace{0.15cm}
\caption{Power-free counts. Chen--Wang's nonpositive shifts are outside the present hypotheses.}\label{tab:powerfree-comparison}
\end{table}

\subsection{The Terras connection and all odd zeta values}\label{sec:terras}
The idea behind Terras's formulas is to study an ordinary $L$-function through a lattice sum with more coordinates \cite{Terras73,Terras76}. For example,
$$
 Z_2(s)=\sum_{(a,b)\in\Z^2\setminus\{(0,0)\}}(a^2+b^2)^{-s}
       =4\zeta(s)\beta(s),\qquad \Re s>1.
$$
This is a ``two-variable'' sum in the lattice coordinates $a,b$, with one complex parameter $s$. Its geometry gives information about the one-variable Dirichlet series on the right. APSF organizes the passage: a divisor $d$ of a vector's content becomes an extra coordinate on the Fourier side. A planar sum is thereby related to a three-dimensional Epstein sum. At the symmetric lattice, differentiation eliminates that extra sum and leaves a formula for $Z_2(s)$, hence for the odd zeta and even beta values. At $s=3/2$ this is the argument of Terras \cite[Proposition 4]{Terras76}.

We give the identity in dimension $n$ so that this mechanism is visible. For $n\ge1$, $\nu>1/2$, and $t>0$, put $s=\nu+n/2$, $C_a=\Gamma(a)/(2\pi^a)$, and $g_{\nu,t}(\xx)=|\xx|^\nu K_\nu(2\pi t|\xx|)$, where $K_\nu$ is defined in \eqref{eq:Kdefinition}.\footnote{This is the same Mat\'ern kernel as in footnote \ref{fn:matern}, now on $\R^n$.} For $\vv\in\Z^n\setminus\{0\}$, $\gcd(\vv)$ means the positive greatest common divisor of its coordinates, including any zero coordinates, and $\sigma_a(N)=\sum_{d\mid N}d^a$. Define
\begin{align*}
 S_n(\nu,t)&=\sum_{\vv\in\Z^n\setminus\{0\}}
 \sigma_{-2\nu}(\gcd(\vv))|\vv|^\nu K_\nu(2\pi t|\vv|),\\
 Z_n(s)&=\sum_{\mm\in\Z^n\setminus\{0\}}|\mm|^{-2s},\qquad
 Z_{n+1,t}(s)=\sum_{(d,\mm)\in\Z^{n+1}\setminus\{0\}}
 (t^2d^2+|\mm|^2)^{-s}.
\end{align*}
The first sum converges exponentially; the Epstein series converge absolutely at the indicated $s>(n+1)/2$. Our Epstein series count both a vector and its negative; Terras's definition in \cite{Terras73} is one half of this sum.

\begin{theorem}[Divisor-weighted Bessel identity]\label{thm:terras-content}
With the preceding notation,
\begin{equation}\label{eq:terras-content}
 S_n(\nu,t)=\frac{C_s}{2}t^\nu
 \bigl(Z_{n+1,t}(s)-Z_n(s)\bigr)
 -C_\nu t^{-\nu}\zeta(2\nu).
\end{equation}
\end{theorem}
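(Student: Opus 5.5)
The plan is to apply the origin-corrected dilation formula, that is, Theorem \ref{thm:gen_APSF} with $H(d)=1$ and $W_d=1$ in dimension $n$, exactly as in Remark \ref{rem:zero-term} but on $\R^n$. I would take the coefficients $b(d)=d^{-2\nu}$ and the test functions $f_d(\xx)=g_{\nu,t}(d\xx)$, and then regroup by content. The reason for the origin-corrected version is that $g_{\nu,t}$ does not vanish at the origin, so the $c(0)=0$ mechanism of Theorem \ref{thm:vector-dilation} is unavailable. I would apply the identity first with finitely many $d$, subtract the lattice origin inside each block, and then let the $d$-range grow.

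On the arithmetic side, write $\vv=d\jj$ with $\jj\neq0$. Since
$$
 d^{-2\nu}g_{\nu,t}(d\jj)=d^{-2\nu}\,|d\jj|^\nu K_\nu(2\pi t|d\jj|),
$$
summing over the pairs $(d,\jj)$ with $d\jj=\vv$ collects exactly the weight $\sum_{d\mid\gcd(\vv)}d^{-2\nu}=\sigma_{-2\nu}(\gcd(\vv))$. Hence the nonzero lattice points give $S_n(\nu,t)$. The subtracted origins contribute $\sum_d d^{-2\nu}g_{\nu,t}(0)$. From the small-argument behaviour $K_\nu(z)\sim\Gamma(\nu)2^{\nu-1}z^{-\nu}$ we get $g_{\nu,t}(0)=\Gamma(\nu)/(2\pi^\nu t^\nu)=C_\nu t^{-\nu}$, so the origins give $C_\nu t^{-\nu}\zeta(2\nu)$, which converges because $\nu>1/2$. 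Absolute convergence of the arithmetic side holds because $\sum_d d^{-2\nu}\sum_{\jj\ne0}|g_{\nu,t}(d\jj)|$ is dominated by exponential decay in $d|\jj|$.

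On the Fourier side I need the $n$-dimensional Mat\'ern pair
$$
 \widehat g_{\nu,t}(\xxi)=C_s\,t^\nu(t^2+|\xxi|^2)^{-s},\qquad s=\nu+n/2,
$$
which generalizes the planar pair in Remark \ref{rem:rem13}. I would derive it the same way: insert the integral representation \eqref{eq:Kdefinition}, take the Gaussian Fourier transform in $\R^n$, and finish with a Gamma integral. Dilation gives $\widehat{f_d}(\mm)=d^{-n}\widehat g_{\nu,t}(\mm/d)$. Multiplying by $b(d)$ then yields
$$
 d^{-2\nu-n}C_s t^\nu d^{2s}(t^2d^2+|\mm|^2)^{-s}=C_st^\nu(t^2d^2+|\mm|^2)^{-s},
$$
because $2s-n-2\nu=0$. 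Summing over $d\ge1$ and all $\mm\in\Z^n$ covers the half of $\Z^{n+1}\setminus\{0\}$ with $d>0$. By the symmetry $d\mapsto-d$ this half equals $\tfrac12(Z_{n+1,t}(s)-Z_n(s))$, where the $Z_n$ term removes the slice $d=0$. This Fourier double sum converges absolutely since $s>(n+1)/2$, so separating the blocks is legitimate. Moving the origin sum to the other side gives \eqref{eq:terras-content}.

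I expect the main obstacle to be Poisson admissibility of each $f_d$. The function $g_{\nu,t}$ is continuous and decays exponentially, but near the origin it is only as regular as $|\xx|^{2\nu}\log$ terms allow. I would argue as in Example \ref{ex:euler-product}. The periodization converges uniformly to a continuous periodic function, and its Fourier coefficients are $\widehat g(\mm/d)d^{-n}=O(|\mm|^{-2s})$ with $2s>n$, so they are absolutely summable. The Fourier series therefore converges absolutely to the periodization, which gives the Poisson formula pointwise, in particular at the origin.
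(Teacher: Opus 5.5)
Your proposal is correct and follows the paper's proof essentially step by step: the Mat\'ern Fourier pair obtained from the integral representation of $K_\nu$ and a Gaussian transform, the dilated origin-retaining Poisson application with weights $d^{-2\nu}$, the regrouping by content into $\sigma_{-2\nu}(\gcd(\vv))$, the cancellation $2s=2\nu+n$, and the slice-and-symmetry count giving $\tfrac12\bigl(Z_{n+1,t}(s)-Z_n(s)\bigr)$. The only difference is how you justify Poisson admissibility: you use uniform convergence of the periodization together with absolute summability of the $O(|\mm|^{-2s})$ Fourier coefficients, whereas the paper applies Poisson summation to each Gaussian under the $K_\nu$ integral and invokes Tonelli's theorem; both arguments are valid.
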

\begin{proof}
The usual integral for $K_\nu$ \cite[\S10.32.10]{DLMF}, followed by the Gaussian Fourier transform, gives
\begin{align*}
 g_{\nu,t}(\xx)&=\frac1{2(\pi t)^\nu}\int_0^\infty
 u^{\nu-1}\exp\left(-u-\frac{\pi^2t^2|\xx|^2}{u}\right)\,du,\\
 \widehat g_{\nu,t}(\xxi)
 &=\frac1{2(\pi t)^\nu(\pi t^2)^{n/2}}
 \int_0^\infty u^{s-1}\exp\left(-u\left(1+\frac{|\xxi|^2}{t^2}\right)\right)\,du
 =C_st^\nu(t^2+|\xxi|^2)^{-s}.
\end{align*}
In particular $g_{\nu,t}(0)=C_\nu t^{-\nu}\ne0$. These formulas also justify Poisson summation: apply it to each Gaussian under the integral. On the zero residue class both sides are nonnegative, so Tonelli's theorem applies. On any other residue class the transformed terms have factors of absolute value one and are dominated by the zero-class frequency sum, which converges since $2s>n$. Thus $g_{\nu,t}$ is admissible on all residue classes; this argument supplies the required admissibility at the origin.

For each $d$, apply Theorem \ref{thm:polynomial-T1} with the constant polynomial $P=1$, $c(u)=u$, $b(j)=\mathbf1_{j=1}$, $h(j)=j$, $\omega(j)=1$, and $g(\xx)=g_{\nu,t}(d\xx)$. As in Remark \ref{rem:constant-polynomial}, the origin is included and scaling gives
$$\sum_{\vv\in\Z^n}g_{\nu,t}(d\vv)=d^{-n}\sum_{\mm\in\Z^n}\widehat g_{\nu,t}(\mm/d).$$
Multiply by $d^{-2\nu}$ and sum over $d$. This arithmetic series is absolutely convergent since $\sum_dd^{-2\nu}<\infty$ and $g_{\nu,t}$ decays exponentially. Regrouping $d\vv$ gives $\sigma_{-2\nu}(\gcd(\vv))$ at each nonzero vector, the same vector-divisibility grouping as in Theorem \ref{thm:vector-dilation}. Subtracting the actual origin term gives
\begin{align*}
 S_n(\nu,t)
 &=\sum_{d\ge1}d^{-2\nu}
 \left(d^{-n}\sum_{\mm\in\Z^n}\widehat g_{\nu,t}(\mm/d)-g_{\nu,t}(0)\right)\\
 &=C_st^\nu\sum_{d\ge1}\sum_{\mm\in\Z^n}
 (t^2d^2+|\mm|^2)^{-s}-C_\nu t^{-\nu}\zeta(2\nu).
\end{align*}
Here $2s=2\nu+n$ cancels every power of $d$ outside the denominator: the arithmetic dilation has become the geometric coordinate $d$. The slice $d=0$ is $Z_n$, and the positive and negative slices contribute equally. Thus the double sum is
$\tfrac12(Z_{n+1,t}-Z_n)$, proving \eqref{eq:terras-content}. The subtracted term is the lattice origin retained by the data $P=1$ and $c(u)=u$.
\end{proof}

\begin{remark}Writing $\vv=d\mm$ in the divisor sum also gives
$$
 S_n(\nu,t)=\sum_{d\ge1}\sum_{\mm\ne0}
 \left(\frac{|\mm|}{d}\right)^\nu K_\nu(2\pi td|\mm|).
$$
Consequently \eqref{eq:terras-content} is Terras's Epstein expansion for the quadratic form $t^2d^2+m_1^2+\cdots+m_n^2$, written with divisor weights \cite{Terras73}. At $t=1$ all $n+1$ coordinates enter through their squares with the same coefficient. This symmetry makes it possible to eliminate $Z_{n+1,1}(s)$ by combining \eqref{eq:terras-content} with its derivative in $t$.
\end{remark}

For $n=2$, Figure \ref{fig:terras-layers} depicts the sum over $d$ and $\mm=(m_1,m_2)$. The plane $d=0$ contributes $Z_2(s)$, and the layers with $d$ and $-d$ have the same denominators. Removing that plane and keeping only $d>0$ therefore gives
$$
 \sum_{d\ge1}\sum_{\mm\in\Z^2}(t^2d^2+|\mm|^2)^{-s}
 =\frac12\{Z_{3,t}(s)-Z_2(s)\},\qquad s>3/2.
$$
This is the geometric step in \eqref{eq:terras-content}.
\begin{figure}[htbp]
 \centering
 \includegraphics[width=.84\linewidth]{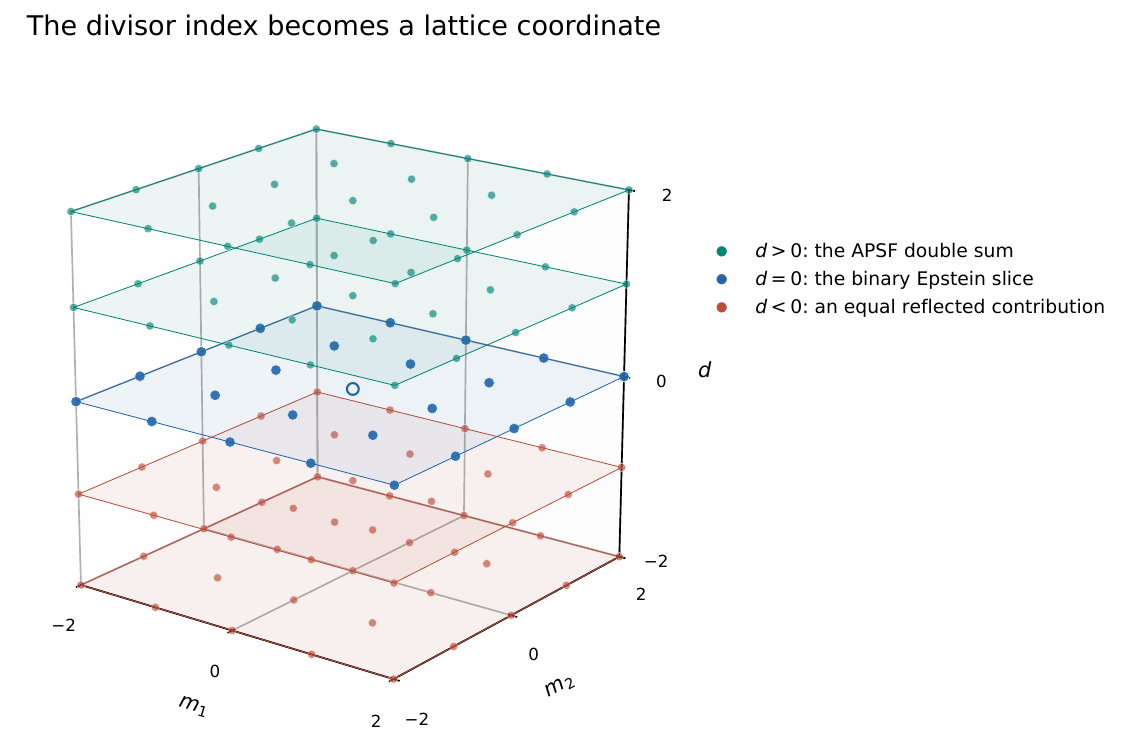}
 \caption{The layers of $(d,m_1,m_2)\in\Z^3$ at $t=1$. The blue plane $d=0$ contributes $Z_2(s)$; its origin is omitted. Equal contributions from $d>0$ and $d<0$ explain the factor $1/2$ in $\tfrac12\{Z_{3,t}(s)-Z_2(s)\}$. For general $t$, the point is placed at $(td,m_1,m_2)$, so changing $t$ changes only its first coordinate.}
 \label{fig:terras-layers}
\end{figure}

The next step extracts the planar sum from this three-dimensional identity. At $t=1$, permutation of $d,m_1,m_2$ gives
$$
 \sum_{(d,m_1,m_2)\ne0}\frac{d^2}{(d^2+m_1^2+m_2^2)^{s+1}}
 =\frac13 Z_{3,1}(s),\qquad
 \left.\frac{d}{dt}Z_{3,t}(s)\right|_{t=1}=-\frac{2s}{3}Z_{3,1}(s).
$$
The factor $1/3$ follows by adding the three equal numerator sums: their numerators add up to the denominator's quadratic form. Hence the value and derivative of \eqref{eq:terras-content} at $t=1$ give two linear equations involving $Z_{3,1}(s)$ and $Z_2(s)$. Eliminating $Z_{3,1}(s)$ leaves $Z_2(s)=4\zeta(s)\beta(s)$ expressed through the convergent Bessel series and its derivative. At $s=2k+1$, the value $\beta(2k+1)$ is explicit, so this determines $\zeta(2k+1)$. At $s=2k$, the explicit value $\zeta(2k)$ similarly determines $\beta(2k)$. The corollary gives the resulting formulas.

\begin{corollary}[Odd zeta values and even beta values]\label{cor:terras-allodd}
For an integer $\nu\ge1$, put $s=\nu+1$, $S_\nu=S_2(\nu,1)$, and
\begin{align*}
 T_\nu=\left.\partial_t S_2(\nu,t)\right|_{t=1}
 &=\sum_{\vv\in\Z^2\setminus\{0\}}\sigma_{-2\nu}(\gcd(\vv))
 2\pi|\vv|^{\nu+1}K_\nu'(2\pi|\vv|),\\
 R_\nu&=2(2\nu-1)C_\nu\zeta(2\nu)-3T_\nu+(\nu-2)S_\nu.
\end{align*}
The prime on $K_\nu$ denotes differentiation with respect to its argument. Then
\begin{equation}\label{eq:terras-zeta-beta}
 4C_s s\,\zeta(s)\beta(s)=R_\nu,
 \qquad \beta(s)=\sum_{j=0}^\infty\frac{(-1)^j}{(2j+1)^s}.
\end{equation}
In particular, for every $k\ge1$,
\begin{equation}\label{eq:terras-allodd}
 \zeta(2k+1)=\frac{R_{2k}}{4(2k+1)C_{2k+1}\beta(2k+1)},
 \qquad
 \beta(2k)=\frac{R_{2k-1}}{8kC_{2k}\zeta(2k)}.
\end{equation}
These are exponentially convergent Bessel series with explicit coefficients. In particular,
\begin{equation}\label{eq:terras-zeta3}
 \zeta(3)=\frac{4\pi^2}{45}
 -8\sum_{\vv\in\Z^2\setminus\{0\}}\sigma_{-4}(\gcd(\vv))
 2\pi|\vv|^3K_2'(2\pi|\vv|).
\end{equation}
\end{corollary}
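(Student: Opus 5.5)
The plan is to specialize Theorem \ref{thm:terras-content} to $n=2$ and $s=\nu+1$, then eliminate the three-dimensional Epstein series by combining the identity's value and its $t$-derivative at $t=1$. Since $\nu\ge1$, we have $s\ge2>3/2$, so all Epstein series below converge absolutely. Write $X=\tfrac{C_s}{2}Z_{3,1}(s)$ and $Y=\tfrac{C_s}{2}Z_2(s)$. At $t=1$, \eqref{eq:terras-content} reads
$$
 S_\nu=X-Y-C_\nu\zeta(2\nu).
$$

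\textbf{Differentiating at $t=1$.} First I would justify differentiating both sides of \eqref{eq:terras-content} in $t$ near $t=1$.
\begin{itemize}
\item On the Bessel side, $\partial_t\bigl(|\vv|^\nu K_\nu(2\pi t|\vv|)\bigr)=2\pi|\vv|^{\nu+1}K_\nu'(2\pi t|\vv|)$. The exponential decay of $K_\nu'$ is uniform for $t$ in a compact subset of $(0,\infty)$, and $\sigma_{-2\nu}(\gcd(\vv))\le\zeta(2\nu)$ is bounded. Hence the differentiated series converges locally uniformly and equals $T_\nu$ at $t=1$.
\item On the Epstein side, $\partial_t(t^2d^2+|\mm|^2)^{-s}=-2s\,td^2(t^2d^2+|\mm|^2)^{-s-1}$. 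This is dominated locally uniformly by a constant times $(d^2+|\mm|^2)^{-s}$, which is summable for $s>3/2$, so termwise differentiation is allowed.
\end{itemize}
The symmetry remark preceding the corollary then gives $\partial_tZ_{3,t}(s)|_{t=1}=-\tfrac{2s}{3}Z_{3,1}(s)$. The three numerator sums $d^2,m_1^2,m_2^2$ over $(d^2+m_1^2+m_2^2)^{s+1}$ are equal by permutation and add up to $Z_{3,1}(s)$. Differentiating $t^\nu$ and $t^{-\nu}$ as well, I obtain
$$
 T_\nu=\nu(X-Y)-\tfrac{2s}{3}X+\nu C_\nu\zeta(2\nu).
$$

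\textbf{Elimination.} Substituting $X=S_\nu+Y+C_\nu\zeta(2\nu)$ from the first equation gives
$$
 \tfrac{2s}{3}Y=\Bigl(\nu-\tfrac{2s}{3}\Bigr)S_\nu+\Bigl(2\nu-\tfrac{2s}{3}\Bigr)C_\nu\zeta(2\nu)-T_\nu .
$$
Multiplying by $3$ and using $s=\nu+1$, so that $3\nu-2s=\nu-2$ and $6\nu-2s=2(2\nu-1)$, yields $2sY=R_\nu$. Next I use $Z_2(s)=4\zeta(s)\beta(s)$, which follows from $r_2=4(\mathbf1*\chi_{-4})$ in \eqref{ex:gaussian-coefficients}. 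Since $2sY=sC_sZ_2(s)=4sC_s\zeta(s)\beta(s)$, this is exactly \eqref{eq:terras-zeta-beta}.

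\textbf{The two specializations.} Formula \eqref{eq:terras-allodd} is \eqref{eq:terras-zeta-beta} at $\nu=2k$ and at $\nu=2k-1$, divided by the nonzero factor $\beta(2k+1)$ or $\zeta(2k)$ respectively; both of these values are explicit. For \eqref{eq:terras-zeta3}, take $\nu=2$ and $s=3$. Then $\nu-2=0$, so $S_2$ drops out. The constants are
$$
 C_2=\frac{1}{2\pi^2},\qquad C_3=\frac{1}{\pi^3},\qquad \zeta(4)=\frac{\pi^4}{90},\qquad \beta(3)=\frac{\pi^3}{32}.
$$
These give $R_2=\pi^2/30-3T_2$ and $\zeta(3)=\tfrac{8}{3}R_2=\tfrac{4\pi^2}{45}-8T_2$.

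The main obstacle is only the analytic justification of termwise differentiation at $t=1$ on both sides; the uniform domination described above should dispose of it. The rest is linear algebra.
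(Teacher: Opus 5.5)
Your proposal is correct and follows the paper's own argument. You differentiate the Terras identity \eqref{eq:terras-content} at $t=1$, use permutation symmetry to get $\partial_t Z_{3,t}(s)|_{t=1}=-\tfrac{2s}{3}Z_{3,1}(s)$, eliminate $Z_{3,1}(s)$ with the undifferentiated identity, and invoke $Z_2(s)=4\zeta(s)\beta(s)$. The only cosmetic differences are that the paper first does the elimination in general dimension $n$ (its formula for $C_s s Z_n(s)$) before setting $n=2$, and that your domination argument for termwise differentiation is more explicit than the paper's remark on locally uniform convergence.
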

\begin{proof}
At $t=1$, permutation symmetry of the $n+1$ coordinates gives
$$
 \left.\partial_t Z_{n+1,t}(s)\right|_{t=1}
 =-2s\sum_{(d,\mm)\ne0}\frac{d^2}{(d^2+|\mm|^2)^{s+1}}
 =-\frac{2s}{n+1}Z_{n+1,1}(s).
$$
All differentiated sums converge locally uniformly for $t>0$. Differentiating \eqref{eq:terras-content} at $t=1$ and substituting the preceding symmetry identity gives
$$
 \left.\partial_tS_n(\nu,t)\right|_{t=1}
 =\nu S_n(\nu,1)+2\nu C_\nu\zeta(2\nu)
 -\frac{C_s s}{n+1}Z_{n+1,1}(s).
$$
The original identity gives
$$
 Z_{n+1,1}(s)=Z_n(s)+\frac2{C_s}
 \{S_n(\nu,1)+C_\nu\zeta(2\nu)\}.
$$
Substitute this expression in the derivative equation and use $2s=2\nu+n$. The result is
\begin{equation}\label{eq:terras-derivative}
 \begin{aligned}
 C_s s Z_n(s)={}&n(2\nu-1)C_\nu\zeta(2\nu)
 -(n+1)\left.\partial_tS_n(\nu,t)\right|_{t=1}\\
 &+\bigl((n-1)\nu-n\bigr)S_n(\nu,1).
 \end{aligned}
\end{equation}
For $n=2$, the square-lattice identity $Z_2(s)=4\zeta(s)\beta(s)$ from Example \ref{sec:basic-characters} proves \eqref{eq:terras-zeta-beta}. The divisions in \eqref{eq:terras-allodd} are explicit because
$$
 \zeta(2k)=\frac{(-1)^{k+1}B_{2k}(2\pi)^{2k}}{2(2k)!},\qquad
 \beta(2k+1)=\frac{(-1)^kE_{2k}\pi^{2k+1}}{4^{k+1}(2k)!}.
$$
Here the Bernoulli and Euler numbers are defined by
$u/(\exp u-1)=\sum_{j\ge0}B_j u^j/j!$ and
$1/\cosh u=\sum_{j\ge0}E_{2j}u^{2j}/(2j)!$ near zero; these are the classical special-value formulas \cite{DLMF,Terras76}. Finally, with $\nu=2$, $C_3=\pi^{-3}$, $C_2=(2\pi^2)^{-1}$, $\zeta(4)=\pi^4/90$, and $\beta(3)=\pi^3/32$, equation \eqref{eq:terras-zeta-beta} becomes
$\tfrac38\zeta(3)=\pi^2/30-3T_2$, which is \eqref{eq:terras-zeta3}.
\end{proof}

\begin{remark}[Positive quadratic forms, phases, and arbitrary splits]
The same geometry works for a general positive quadratic form: separate one coordinate, complete the square in the others, and sum the resulting translated layers. This gives Terras's block expansions. For a real positive definite symmetric $n\times n$ matrix $Q$, write $Q[\vv]=\vv^{\mathsf T}Q\vv$, and, for $\xx\in\R^n$, define
$$
 S_Q^\xx(\nu,t)=\sum_{\vv\ne0}\ee{\vv\cdot \xx}
 \sigma_{-2\nu}(\gcd(\vv))Q[\vv]^{\nu/2}K_\nu(2\pi t\sqrt{Q[\vv]}).
$$
For any positive quadratic form $U$ in $r$ variables let
$Z(U;s)=\sum_{\ww\in\Z^r\setminus\{0\}}U[\ww]^{-s}$, initially $s>r/2$.
The change of variables $\yy=Q^{1/2}\vv$ in the Fourier integral introduces
$(\det Q)^{-1/2}$ and replaces $|\xxi|^2$ by $Q^{-1}[\xxi]$; multiplication by $\ee{\vv\cdot \xx}$ replaces $\xxi$ by $\xxi+\xx$. Thus the same constant-polynomial applications of Theorem \ref{thm:polynomial-T1}, and the same origin subtraction, prove
\begin{equation}\label{eq:terras-quadratic-content}
 S_Q^\xx(\nu,t)=\frac{C_s t^\nu}{2\sqrt{\det Q}}
 \bigl(Z(Y_{t,\xx};s)-Z(Q^{-1};s)\bigr)-C_\nu t^{-\nu}\zeta(2\nu),
 \quad Y_{t,\xx}[d,\mm]=t^2d^2+Q^{-1}[\mm+d\xx].
\end{equation}
The Gaussian-integral argument above remains valid, since the absolute values of the transformed Gaussian terms are positive Gaussian terms at shifted frequencies. To see every split, let $n_1,n_2\ge1$, let $T,R$ be positive definite matrices of those respective sizes, and let $B$ be a real $n_2\times n_1$ matrix describing the coupling between the two groups of coordinates. Every positive form in $n_1+n_2$ variables has a unique expression
$Y[\aaa,\mm]=T[\aaa]+R[\mm+B\aaa]$ relative to the chosen splitting.\footnote{The matrix $T$ is the Schur complement of the block $R$ in the matrix of $Y$; the displayed decomposition is completion of the square in block form.} For $s>(n_1+n_2)/2$, set $\mu=s-n_2/2$. Splitting off $\aaa=0$, and then writing each $\aaa\ne0$ uniquely as $\aaa=d\pp$ with $d\ge1$ and $\gcd(\pp)=1$, gives
\begin{align}\label{eq:terras-blocks}
 Z(Y;s)={}&Z(R;s)
 +\frac{\pi^{n_2/2}\Gamma(\mu)}{\Gamma(s)\sqrt{\det R}}Z(T;\mu)\notag\\
 &+\frac{2\pi^s}{\Gamma(s)\sqrt{\det R}}
 \sum_{\substack{\pp\in\Z^{n_1}\\\gcd(\pp)=1}}
 T[\pp]^{-\mu/2}S_{R^{-1}}^{B\pp}(\mu,\sqrt{T[\pp]}).
\end{align}
Indeed, apply \eqref{eq:terras-quadratic-content} with $Q=R^{-1}$, $t=\sqrt{T[\pp]}$, $\xx=B\pp$, and $\nu=\mu$ to the sum over $d\ge1,\mm$. Its subtracted origin contributes the middle term because
$\zeta(2\mu)\sum_{\gcd(\pp)=1}T[\pp]^{-\mu}=Z(T;\mu)$.
The sum over $\pp$ contains both signs. All rearrangements are absolutely convergent in the stated region. Formula \eqref{eq:terras-blocks}, upon expanding $S$, is Terras's arbitrary-block Bessel expansion \cite[Theorem 1]{Terras73}, written with divisor weights. The mechanism of Corollary \ref{cor:terras-allodd} is Terras’s: in \cite[Section 2, Proposition 4]{Terras76} she
compares the ternary diagonal forms $\mathrm{diag}(d,1,1)$ and $\mathrm{diag}(1,d,1)$ and differentiates at
$d=1$ to evaluate $Z_2(I,3/2)$, and \cite[Corollary to Proposition 3]{Terras76} gives a two-dimensional
Bessel series for $\zeta(3)$. Corollary \ref{cor:terras-allodd} extends her argument from $s=3/2$ to every $s
=\nu+1$.
\end{remark}

\section{Further extensions and final remarks}\label{sec:final}
More general instances of those summation theorems should exist beyond the arithmetic weights treated here. The next two formulas indicate the scope of possible extensions: \\

\noindent\textbf{1.} Let $\Lambda$ be a full lattice in $\R^N$, $L_d\subseteq\Lambda$ a finite-index sublattice, and $w_d$ a weight on $\Lambda/L_d$. Write $L_d^*$ for the Euclidean dual and $\mathcal F_{L_d}(w_d)(\xxi)=\sum_{\uu\in\Lambda/L_d}w_d(\uu)\en{\uu\cdot\xxi}$. The same residue-class specialization of Theorem \ref{thm:polynomial-T1}, after a lattice change of variables, gives
\begin{equation}\label{eq:general-lattice}
 \sum_{\vv\in\Lambda}\sum_d b(d)w_d(\vv)f_d(\vv)
 =\sum_d\frac{b(d)}{\operatorname{covol}(L_d)}
 \left(\sum_{\xxi\in L_d^*}\mathcal F_{L_d}(w_d)(\xxi)\widehat f_d(\xxi)\right),
\end{equation}
assuming Poisson admissibility on the residue classes and $\sum_d|b(d)|\sum_{\vv\in\Lambda}|w_d(\vv)f_d(\vv)|<\infty$, with completed inner Fourier sums. \\

\noindent\textbf{2.} We could also allow non-periodic weights: for $c(x)=\sum_j a_j\ee{\alpha_jx}$, $\alpha_j\in\R$, $\sum_j|a_j|<\infty$, the period-one formula and the uniform Schwartz bound give
\begin{equation}\label{eq:fourier-pair-direction}
 \sum_{n\in\Z}c(n)f(n)
 =\sum_j a_j\sum_{m\in\Z}\widehat f(m+\alpha_j)
 =\int_\R\widehat f(\xi)\,d\mu(\xi),\qquad f\in\mathcal S(\R),
\end{equation}
where $\mu=\sum_j a_j\sum_{m\in\Z}\delta_{m+\alpha_j}$. This is a concrete Fourier summation pair within the setting studied by Gon\c calves \cite{Goncalves23} and classified by Gon\c calves--Vedana \cite{Goncalves25}, after reflecting $\mu$ to match their Fourier-sign convention. Their result allows a strongly tempered measure and countably supported coefficients of finite exponential growth; extending the arithmetic constructions to that setting is a natural question for further study.

We obtained several further applications of our results that are not included here for reasons of space. These concern Epstein $L$-functions, Ingham-type results, Gauss circle and sphere problems, Lambert series for modular forms and related identities. We hope that the examples presented in this article convey the broader scope of the method and encourage readers to explore further applications. Moreover, we have not found the explicit error bounds of Theorem \ref{thm:kfree-general} in this form for $n\ge4$ in the literature we consulted; the cases
$n=r=2$, $k=1$, and $n=3$ of Theorem \ref{thm:kfree-general} recover the exponents of Tolev \cite{Tolev12} and Chen--Wang \cite{ChenWang23}.

\section{AI disclosure}\label{sec:ai-disclosure and Acknowledgments}
We issued a brief announcement of the results contained in this article \cite{OliveiraPanzone26} in light of recent concerns about research articles relying solely on AI-generated information. These developments have made us uncomfortable continuing our research under pressure to publish quickly, particularly given our concerns about the confidentiality of unpublished work and the possibility that our contributions could be shared with other researchers without our consent. We express this as a concern, not as an allegation that such sharing has occurred. 
Fable 5 and GPT 5.6a were used at several stages to improve the text, check examples, and assist the authors in generating numerical experiments. A particular example, already mention in Remarks \ref{rem:rem13} and \ref{sec:general-field-reciprocity}: extension field-reciprocity calculations began with our original attempt to extend the scalar argument using Bessel kernel and subsequent exchanges, with Claude and ChatGPT, helped to identify the Kelvin--Bessel kernels whose Fourier transforms match the rational remainders in important examples; besides the decisive step it was our two-step/two-channels application of our APSF, the contribution of detecting good candidates for the spectral/Fourier side of our formulas it was in great part a superb application of the power of those LLM. The authors remain responsible for the mathematical statements, proofs, references, and final presentation. All the errors, typographical mistakes, or unsupported claims are our responsibility and a reminder of our human fallibility. 

We thank Dimitar Dimitrov for helpful discussions on this and related topics during the preparation of this work. His insights substantially influenced the final presentation of several results. We also thank Cynthia Bortolotto, Felipe Gonçalves, and João Pedro Ramos: their collaborations with the second author over the past few years have shaped the questions motivating this research and contributed to the development of several APSF applications presented here. Finally, L. O. acknowledges the support of the Office of Naval Research through grant GRANT14201749 (award number N629092412126).

\end{document}